\documentclass[reqno]{amsart}
\usepackage{amssymb}
\usepackage{amsmath}
\usepackage{amsthm}
\usepackage[usenames]{color}
\usepackage{graphicx}
\usepackage{cite}
\usepackage{bbm}
\allowdisplaybreaks[4]

\usepackage[colorlinks,linkcolor=black,anchorcolor=black,citecolor=black]{hyperref}
\usepackage[margin=1.1in]{geometry} 
\usepackage{marginnote}

\usepackage{color}

\newtheorem{thm}{Theorem}[section]

\newtheorem{lem}[thm]{Lemma}
\newtheorem{prop}[thm]{Proposition}
\newtheorem{rem}{Remark}[section]

\numberwithin{equation}{section}

\newcommand{\ka}{\kappa}

\newcommand{\vertiii}[1]{{\left\vert\kern-0.25ex\left\vert\kern-0.25ex\left\vert #1
		\right\vert\kern-0.25ex\right\vert\kern-0.25ex\right\vert}}

\makeatletter
\@namedef{subjclassname@2020}{%
	\textup{2020} Mathematics Subject Classification}
\makeatother

\begin{document}

\title[Modified compensators for the incompressible Euler--VFP system] 
{Modified compensating functions for the incompressible Euler--Vlasov--Fokker--Planck system: Global classical solutions and pointwise-in-space decay}

\author[J. Ni] {Jinkai Ni$^*$}\thanks{$^*$Corresponding author. E-mail address: jinkaini123@gmail.com.}
\address[JKN]{School  of Mathematics, Nanjing University, Nanjing 
 210093, P. R. China}
\email{jinkaini123@gmail.com}

\begin{abstract}
We consider the Cauchy problem for the incompressible Euler-Vlasov-Fokker-Planck (Euler-VFP) system in the whole space
\(\mathbb R^3\) near the global Maxwellian equilibrium. The
Fokker-Planck operator and the particle-fluid drag dissipate the relative momentum but do not separately control the common particle-fluid momentum; in Fourier variables, this degeneracy occurs in the transverse momentum components.
To recover the missing coercivity, we augment the classical four-moment compensator with a finite-rank skew-adjoint correction constructed from second-order Hermite modes. Combined with
the cancellation between the kinetic and fluid drag terms and the
incompressibility constraint, the resulting compensated Fourier energy yields a unique global classical solution for sufficiently small initial
data $(u_0,f_0)\in H^N\times L_v^2(H^N)$, with $N\geq 4$.
The high-order energy argument involves only spatial derivatives of the
kinetic perturbation and requires no mixed \(x\)-\(v\) derivative
estimates. 
We further construct a positive-order Lyapunov functional and
establish the decay rate \((1+t)^{-1/2}\) for all positive-order spatial
derivatives in the \(L^2\)-norm and for the corresponding
pointwise-in-space norms, without any additional \(L^1\) integrability or low-frequency assumption on the initial data. Although no uniform
algebraic decay rate is asserted for the zero-order energy of
\((u,f)\), the directly dissipative variables \(u-J(f)\) and
\(\{\mathbf I-\mathbf P_0\}f\) decay in the \(L^2\)-norm at the same rate, where $
J(f)=\int_{\mathbb R^3}v\sqrt M f\,{\rm d}v$
denotes the particle momentum and \(\mathbf P_0\) is the orthogonal
projection onto
\(\operatorname{span}\{\sqrt M,v_1\sqrt M,v_2\sqrt M,v_3\sqrt M\}\).
To the best of our knowledge, these positive-order and zero-order decay estimates have not previously been established for the incompressible Euler-VFP system.

\end{abstract}

\date{\today}
	
\subjclass[2020]{35Q35, 35Q84, 35B40, 76N10}

\keywords{Incompressible Euler-Vlasov-Fokker-Planck system;  compensating function; global classical solutions; pointwise decay}
\maketitle
\thispagestyle{empty}

\section{Introduction and main results}

\subsection{Introduction}
Fluid–particle interaction models provide a mesoscopic description of dispersed two-phase flows. The dispersed phase is represented by a particle distribution in phase space, whereas the surrounding dense phase is described by macroscopic fluid variables.
Typical examples include sprays and aerosols \cite{BBBDLLT-irma-2005,BD-JHDE-2006}, sedimentation of solid grains and suspension flows \cite{BBKT-SIAM-2003,CP-SIAM-1983}, fuel droplets in combustion theory \cite{Williams-1958,Williams-1985}, evaporating droplets in spray processes \cite{RM-1952-I,RM-1952-II}, and biosprays in medicine \cite{BBJM-esaim-2005,Moussa-PhD-2009}.
At the modelling level, solid non-deformable particles suspended in a carrier fluid are described by a kinetic equation for their distribution in phase space. The action of the carrier flow on the particles is encoded through a frictional relaxation towards the local fluid velocity, whereas microscopic fluctuations of particle velocities are represented by a Fokker–Planck diffusion; see, for instance, \cite{MV-MMMAS-2007}.

In this paper, we investigate the Cauchy problem for the following
three-dimensional incompressible inviscid fluid-particle system
considered in \cite{CDM-KRM-2011}:
\begin{equation}\label{I1}
\left\{
\begin{aligned}
&\partial_t u+u\cdot\nabla_x u+\nabla_x P
   =\int_{\mathbb R^3}(v-u)F\,{\rm d}v,\\
&\nabla_x\cdot u=0,\\
&\partial_t F+v\cdot\nabla_x F
   =\nabla_v\cdot [(v-u)F+\nabla_vF ],
\end{aligned}
\right.
\end{equation}
with   initial data
\begin{align}\label{I1-1}
(u,F)|_{t=0}=(u_0(x),F_0(x,v)).
\end{align}
Here, $t\geq0$, $x\in\mathbb{R}^3$, and $v\in\mathbb{R}^3$. The  function $F=F(t,x,v)\geq0$ is the particle distribution in phase space, whereas $u=u(t,x)\in\mathbb{R}^3$ and $P=P(t,x)$ denote the velocity field and pressure of the incompressible carrier fluid. 
The coupling is governed by the relative velocity \(v-u\): its
\(F\)-weighted velocity moment produces the friction force in the fluid
equation, while the same quantity enters the drift term of the
Vlasov-Fokker-Planck equation; see
\cite{Williams-1985,CP-SIAM-1983}. The initial data are also assumed to satisfy
\begin{align}\label{G1.3}  
\nabla_x\cdot u_0=0,\qquad F_0(x,v)\geq0.  
\end{align}

Our aim is to investigate the global-in-time dynamics of solutions to
\eqref{I1} near the equilibrium \((F,u)=(M,0)\). In particular, we establish the global existence of small classical solutions and describe their large-time behavior, including the decay of positive spatial derivatives and the zero-order \(L^2\) decay of the directly dissipative variables, without imposing any additional \(L^1\) or low-frequency assumption on the initial data.
After normalizing the physical parameters, we take the global Maxwellian to be
\begin{align*}
 M=M(v)={(2\pi)^{-\frac{3}{2}}}e^{-\frac{|v|^{2}}{2}}.   
\end{align*}
To characterize such solutions in the vicinity of this equilibrium, we introduce the perturbation $f=f(t,x,v)$ through
\begin{align*}
F=M+\sqrt{M}f.
\end{align*}
The corresponding perturbation problem \eqref{I1}--\eqref{I1-1} is then written as
\begin{equation}\label{A1}
\left\{
\begin{aligned}
&\partial_t u+u\cdot\nabla_x u+\nabla_x P+u
   =\int_{\mathbb R^3}v\sqrt{M}f\,{\rm d}v-u\int_{\mathbb R^3}\sqrt{M}f\,{\rm d}v,\\
&\nabla_x\cdot u=0,\\
&\partial_t f+v\cdot\nabla_x f+u\cdot\nabla_{v}f-\frac{1}{2}u\cdot v f-u\cdot v\sqrt{M}
   = \mathcal{L}f,
\end{aligned}
\right.
\end{equation}
with initial data
\begin{align}\label{A-1}
(u,f)|_{t=0}=(u_0(x),f_0(x,v))=\Big(u_0(x), \frac{F_0(x,v)-M}{\sqrt{M}} \Big),
\end{align}
where $\mathcal{L}$ is the linearized Fokker–Planck operator given by 
\begin{align}\label{G1.5}
\mathcal{L}f=\frac{1}{\sqrt{M}}\nabla_{v}\cdot
\Big[M\nabla_{v}\Big(\frac{f}{\sqrt{M}}\Big)\Big]=\Delta_{v}f-\frac{|v|^{2}}{4}f+\frac{3}{2}f,
\end{align}
which has the {\it one dimensional} null space $\mathcal{N}(\mathcal{L})={\rm span}\{\sqrt{M}\}$.

Many early studies of kinetic–fluid models concerned viscous couplings. Hamdache \cite{Hk-jjiam-1998} established the global existence and large time behavior of weak solutions to the Vlasov-Stokes system. For the incompressible Navier-Stokes-Vlasov equations, Goudon, Jabin, and Vasseur \cite{GJV-IUMJ-2004,GJV-IUMJ-2004-2} investigated hydrodynamic limits in the light- and fine-particle regimes, while Boudin, Desvillettes, Grandmont, and Moussa \cite{BD-jneq-2009} constructed global weak solutions on the torus $\mathbb T^3$. 
Related global weak-solution results were obtained by Yu \cite{Yc-JMPA-2013} and, for a time-dependent domain, by Boudin, Grandmont, and Moussa \cite{BGM-JDE-2017}.
When Fokker-Planck diffusion is included, the incompressible Navier-Stokes-VFP system admits a  Maxwellian equilibrium. Goudon, He, Moussa, and Zhang \cite{GHMZ-2010-SIMA} established the global existence and exponential stability of strong solutions near equilibrium in the periodic domain. 
Chae, Kang, and Lee \cite{CKL-JDE-2011} obtained global weak and classical solutions for the related Navier-Stokes-VFP system. For the corresponding compressible models, Mellet and Vasseur \cite{MV-MMMAS-2007,MV-CMP-2008} proved the existence of global finite-energy weak solutions and investigated their asymptotic behavior.
Subsequently, global classical solutions near equilibrium and their large time behavior were studied in \cite{CKL-JHDE-2013,LMW-SIAM-2017}.

Removing the fluid viscosity term $-\mu \Delta u$ leads to Euler-kinetic systems. In the absence of the particle diffusion term $-\Delta_{v}F$, Baranger and Desvillettes \cite{BD-JHDE-2006} established the local existence of classical solutions for a compressible Euler-Vlasov system arising in spray dynamics. When the Fokker-Planck term is retained, Carrillo and Goudon \cite{CG-CPDE-2006} analyzed the stability and asymptotic behavior of a fluid-particle interaction model in several physical regimes. Duan and Liu \cite{DL-krm-2013}  proved the global well-posedness of small classical solutions to the compressible Euler-VFP system. They also established algebraic convergence in the whole space $\mathbb{R}^3$ and exponential convergence in the periodic domain $\mathbb{T}^3$. 
Further results on compressible fluid–particle models can be found in \cite{Ww-CMS-2024,LNW-2025-preprint,Mu-Wang-20,LNW-arxiv-2026,JP-jde-2017,Jp-2020,LWW-ARMA-2022} and references cited therein.

We now shift our focus to incompressible fluid–particle systems. Concerning the inhomogeneous Navier–Stokes–Vlasov system characterized by a density-dependent drag force, Wang and Yu \cite{WY-JDE-2015} proved the global existence of weak solutions in a bounded domain. Hydrodynamic limits for inhomogeneous incompressible kinetic–fluid systems were investigated by Su and Yao \cite{SY-JDE-2020} for the Navier–Stokes–VFP equations and by Su, Wu, Yao, and Zhang \cite{SWYZ-JDE-2024} for the corresponding Navier–Stokes–Vlasov   equations. Recently, Jiang, Li, and the author of this paper \cite{JLN-2025-JMP} obtained the global existence and large time decay of classical solutions to the incompressible inhomogeneous Navier–Stokes–VFP system in the whole space $\mathbb{R}^3$.
For the homogeneous incompressible inviscid system \eqref{I1} considered in this paper, Carrillo, Duan, and Moussa \cite{CDM-KRM-2011} established global classical solutions and investigated their large time behavior. In particular, under an additional $L^1$ assumption on the initial data, they obtained the decay rate
\begin{align*}   
 \| f(t
) \|_{L_v^2(H ^3)} + \|u(t)\|_{H ^3} \leq C_{\delta} (1+t)^{-\frac{3}{4}+\delta}, \quad\text{for}\quad \delta>0, 
\end{align*}  
where $C_{\delta}$ depends only on $\delta$ and may diverge as $\delta\rightarrow0^+$.

In this paper, we revisit system \eqref{I1} and establish the global well-posedness of small classical solutions by means of a modified
compensating-function method. Originating in Kawashima's work
\cite{Kawashima1990}, this approach was formulated in a kinetic setting
in \cite{Glassey} and further developed by Yang and Yu
\cite{YangYu2010}. To our knowledge, the method has not previously been adapted to the momentum-exchange structure of particle-fluid coupling models. A compensator tailored to the relative momentum between the particle and fluid phases is therefore constructed. Besides providing
the uniform a priori estimates for \((u,f)\) required by the global theory, the resulting Lyapunov structure yields the decay rate
\((1+t)^{-1/2}\) for positive-order spatial derivatives in the
\(L^2\)-norm and for the corresponding pointwise-in-space norms, without
any additional \(L^1\) or low-frequency assumption on the initial data.
Moreover, although no uniform algebraic decay rate is available for the complete zero-order energy, the directly dissipative variables
\(u-J(f)\) and \(\{\mathbf I-\mathbf P_0\}f\) decay in the zero-order
\(L^2\)-norm at the same rate. Such zero-order decay of the directly
dissipative variables was not established in
\cite{CDM-KRM-2011}.

\subsection{The compensating-function method and the Euler-VFP structure}

We first recall the compensating mechanism used in our analysis. It is closely
related to partially dissipative hyperbolic systems of balance laws, where
relaxation acts directly on only part of the unknowns. Shizuta and Kawashima
\cite{SK-HMJ-1985} introduced the coupling condition through which dissipation
is transferred to the remaining components. Further developments concerning
dissipative structures, global existence, and large-time behavior can be found
in \cite{KawashimaYong2004,BianchiniHanouzetNatalini2007,
BeauchardZuazua2011}.

Consider the following linear partially dissipative hyperbolic system:
\begin{align*}
\partial_t U+\sum_{j=1}^3 A_j\partial_{x_j}U+\mathbf L U=0,
\end{align*}
where the matrices \(A_j\) are symmetric and \(\mathbf L\) is symmetric and
nonnegative definite, with a nontrivial kernel. For
\begin{align*}
\omega=\frac{\xi}{|\xi|}\in\mathbb S^2,
\qquad
A(\omega)=\sum_{j=1}^3\omega_jA_j,
\end{align*}
the Shizuta-Kawashima condition requires that no nonzero vector in
\(\ker\mathbf L\) be an eigenvector of \(A(\omega)\). In other words,
transport couples each undamped mode to a dissipative one.
Under this condition, one can construct a skew-symmetric matrix
\(K(\omega)\), depending smoothly on \(\omega\), such that
\begin{align*}
\frac12\big(
K(\omega)A(\omega)
+\bigl[K(\omega)A(\omega)\bigr]^{\top}
\big)
+\mathbf L
\end{align*}
is uniformly positive definite for \(\omega\in\mathbb S^2\). More precisely,
there exist constants \(C_0,c_0>0\), independent of
\(\omega\in\mathbb S^2\) and \(Z\in\mathbb C^n\), such that
\begin{align*}
\operatorname{Re}
\bigl(
K(\omega)A(\omega)Z,Z
\bigr)_{\mathbb C^n}
+
C_0
\bigl(
\mathbf L Z,Z
\bigr)_{\mathbb C^n}
\geq
c_0|Z|^2.
\end{align*}
Here, \((\cdot,\cdot)_{\mathbb C^n}\) denotes the standard Hermitian inner
product on \(\mathbb C^n\). Although \(\mathbf L\) is degenerate, coupling
with transport transfers the available dissipation to modes in
\(\ker\mathbf L\). With a suitable Fourier weight, one obtains the
dissipation symbol $\frac{|\xi|^2}{1+|\xi|^2}$,
which describes the frequency-dependent behavior of solutions:
low-frequency modes decay diffusively, whereas high-frequency modes are
uniformly damped.

In kinetic equations, \(K(\omega)\) is replaced by a bounded skew-adjoint
operator on \(L_v^2\), while \(A(\omega)\) becomes multiplication by
\(v\cdot\omega\). Yang and Yu \cite{YangYu2010} developed such a construction
for the Vlasov-Maxwell-Fokker-Planck (VMFP) system on the four-moment space
\begin{align}\label{G1.8WW}
\mathcal W
=
\operatorname{span}
\big\{
\sqrt M,\,
v_1\sqrt M,\,
v_2\sqrt M,\,
v_3\sqrt M
\big\}.
\end{align}
Their argument combines the kinetic compensator with Maxwell’s equations and Gauss’s law, which relate particle moments to the electromagnetic fields and provide additional control over the field variables.
No analogous field structure is available for the Euler-VFP system
\eqref{A1}. Instead, particle momentum is coupled directly to the
incompressible fluid velocity through the drag force. To describe the
resulting degeneracy, fix \(\xi\neq0\) and set
\(\omega=\frac{\xi}{|\xi|}\).
For \(\widehat f=\widehat f(t,\xi,v)\), define its density and momentum moments by
\begin{align*}
a(\widehat f)
=
\langle\widehat f,\sqrt M\rangle_v,
\qquad
J(\widehat f)
=
\langle\widehat f,v\sqrt M\rangle_v.
\end{align*}
The divergence-free condition \eqref{A1}$_2$ gives
\begin{align*}
\omega\cdot\widehat u=0.
\end{align*}
Accordingly, \(J(\widehat f)\) is decomposed into its longitudinal and
transverse parts:
\begin{align*}
J(\widehat f)=
\bigl(J(\widehat f)\cdot\omega\bigr)\omega
+
\bigl({\rm Id}-\omega\otimes\omega\bigr)J(\widehat f)
&=:J_\parallel(\widehat f)+J_\perp(\widehat f).
\end{align*}
Let \(\mathbf P\) denote the orthogonal projection onto
\(\mathcal N(\mathcal L)\). Any function \(f=f(t,x,v)\) admits the unique
decomposition
\begin{align*}
f=\mathbf Pf+\{\mathbf I-\mathbf P\}f,
\end{align*}
where \(\mathbf Pf\) and \(\{\mathbf I-\mathbf P\}f\) are referred to as
the macroscopic and microscopic components, respectively. In Fourier
variables,
\begin{align*}
\mathbf P\widehat f
=
a(\widehat f)\sqrt M.
\end{align*}
The standard coercivity of the Fokker-Planck operator is expressed in terms
of this projection:
\begin{align}\label{intro-FP-coercivity}
-\langle\mathcal L\widehat f,\widehat f\rangle_v
=
-\bigl\langle
\mathcal L\{\mathbf I-\mathbf P\}\widehat f,
\{\mathbf I-\mathbf P\}\widehat f
\bigr\rangle_v\geq
\lambda_0
\bigl|\{\mathbf I-\mathbf P\}\widehat f\bigr|_\nu^2,
\end{align}
for some constant \(\lambda_0>0\).

To isolate the particle momentum, we introduce the four-moment
projection \(\mathbf P_0\) onto \(\mathcal W\):
\begin{align*}
\mathbf P_0\widehat f
=
a(\widehat f)\sqrt M
+
J(\widehat f)\cdot v\sqrt M.
\end{align*}
Unlike \(\mathbf P\), the projection \(\mathbf P_0\) is not the null-space
projection associated with the coercivity of \(\mathcal L\);  it is introduced to separate the momentum modes in the coupled particle–fluid energy. Since
\begin{align*}
\mathcal L(v_i\sqrt M)=-v_i\sqrt M,
\qquad 1\leq i\leq3,
\end{align*}
one has
\begin{align*}
-\langle\mathcal L\widehat f,\widehat f\rangle_v=
|J(\widehat f)|^2
-
\bigl\langle
\mathcal L\{\mathbf I-\mathbf P_0\}\widehat f,
\{\mathbf I-\mathbf P_0\}\widehat f
\bigr\rangle_v\geq
|J(\widehat f)|^2
+
\lambda_1
\bigl|\{\mathbf I-\mathbf P_0\}\widehat f\bigr|_\nu^2,
\end{align*}
for some constant \(\lambda_1>0\). Hence, $\mathcal{L}$
remains coercive on 
\(\{\mathbf I-\mathbf P_0\}\widehat f\),
although \(\mathbf P_0\) is introduced primarily to separate the momentum
modes.
Consequently, the linearized particle-fluid energy has the exact
dissipation form
\begin{align}\label{intro-relative-dissipation}
\mathcal D(\widehat f,\widehat u)
&:=
-\langle\mathcal L\widehat f,\widehat f\rangle_v
+|\widehat u|^2
-2\operatorname{Re}
\bigl(\widehat u\cdot\overline{J(\widehat f)}\bigr)
\nonumber\\
&=
|\widehat u-J(\widehat f)|^2
-
\bigl\langle
\mathcal L\{\mathbf I-\mathbf P_0\}\widehat f,
\{\mathbf I-\mathbf P_0\}\widehat f
\bigr\rangle_v
\nonumber\\
&=
|\widehat u-J_\perp(\widehat f)|^2
+|J_\parallel(\widehat f)|^2
-
\bigl\langle
\mathcal L\{\mathbf I-\mathbf P_0\}\widehat f,
\{\mathbf I-\mathbf P_0\}\widehat f
\bigr\rangle_v
\nonumber\\
&\geq
|\widehat u-J_\perp(\widehat f)|^2
+|J_\parallel(\widehat f)|^2
+
\lambda_1
\bigl|\{\mathbf I-\mathbf P_0\}\widehat f\bigr|_\nu^2,
\end{align}
where the third equality follows from
\(\omega\cdot\widehat u=0\).

Although \eqref{intro-FP-coercivity} controls the full microscopic component
\(\{\mathbf I-\mathbf P\}\widehat f\), the particle-fluid cross term changes
the zero-order dissipative structure. More precisely,
\eqref{intro-relative-dissipation} controls the relative momentum
\(\widehat u-J_\perp(\widehat f)\), the longitudinal component
\(J_\parallel(\widehat f)\), and the four-moment remainder
\(\{\mathbf I-\mathbf P_0\}\widehat f\), but does not separately control \(\widehat u\) and \(J_\perp(\widehat f)\) at zero order. This degeneracy is
consistent with conservation of total particle-fluid momentum.
Unlike the VMFP setting in \cite{YangYu2010}, where Maxwell equations assist
the four-moment compensator, the four-moment construction alone does not remove the transverse degeneracy in the Euler-VFP system. We therefore add a
correction coupling \(J_\perp(\widehat f)\) to dissipative second-order
Hermite modes. Since
\begin{align*}
\{\mathbf I-\mathbf P\}\widehat f
=
J(\widehat f)\cdot v\sqrt M
+
\{\mathbf I-\mathbf P_0\}\widehat f,
\end{align*}
controlling the momentum modes together with the coercivity of
\(\{\mathbf I-\mathbf P_0\}\widehat f\) recovers the full microscopic
component associated with \(\mathbf P\). The resulting skew-adjoint operator
\(S(\omega)\) satisfies
\begin{align}\label{G1.8}
\operatorname{Re}
\bigl\langle
S(\omega)(v\cdot\omega)\widehat f,\widehat f
\bigr\rangle_v
+C_0\mathcal D(\widehat f,\widehat u)
\geq
c_0\Bigl(
|a(\widehat f)|^2
+|\widehat u|^2
+\bigl|\{\mathbf I-\mathbf P\}\widehat f\bigr|_\nu^2
\Bigr),
\end{align}
where \(C_0,c_0>0\) are independent of
\(\omega\in\mathbb S^2\), \(\widehat f\), and \(\widehat u\). The construction of
\(S(\omega)\) and the proof of \eqref{G1.8} are given in Section 2.


We now state the main results.

\subsection{Main results}

\begin{thm}\label{Th1}
Let \( N \geq 4 \) be an integer, and suppose that condition \eqref{G1.3} holds. Assume that
the initial data \((u_0,f_0)\in H^N\times L_v^2(H^N)\) satisfy
\begin{align*}
 \|f_0\|_{L_v^2(H^N)}+\|  u_0\|_{H^N} \leq \varepsilon,  
\end{align*}
for some sufficiently small constant \(\varepsilon>0\).
Then the Cauchy problem \eqref{A1}--\eqref{A-1} admits a unique global
classical solution \((u,f)\) such that
\(F=M+\sqrt{M}f\geq 0\),  
\begin{align*}
f\in C\big([0,\infty);L_v^2(H^N)\big),
\qquad
u\in C\big([0,\infty);H^N\big),  
\end{align*}
and
\begin{align}\label{TA2}
  \|f(t)\|_{L_v^2(H^N)}^2+\| u(t)\|_{H^N}^2 + \int_0^t D_{N}(\tau)\,{\rm d}\tau  \leq C\big(\| u_0\|_{H^N}^2+\|f_0\|_{L_v^2(H^N)}^2\big).
\end{align}
Moreover, the positive-order spatial derivatives satisfy
\begin{align}\label{TA3a}
&\sum_{1\leq|\alpha|\leq N-1}
\left\{
\|\partial_x^\alpha f(t)\|_{L_{x,v}^2}
+
\|\partial_x^\alpha u(t)\|_{L^2}
\right\}\leq
C(1+t)^{-\frac12}
\left(
\|f_0\|_{L_v^2(H^N)}
+
\|u_0\|_{H^N}
\right),
\end{align}
for any $t\geq 0$.
Furthermore, the corresponding pointwise-in-space estimate holds:
\begin{align}\label{TA3}
&\sup_{x\in \mathbb R^3}\bigg\{ \sum_{|\alpha|\leq N-3}\bigg(\int_{\mathbb R^3}|\partial^\alpha_{x} f(t,x,v)|^2\,{\rm d}v 
      \bigg)^\frac{1}{2} +\sum_{|\alpha|\leq N-3} |\partial^\alpha u(t,x)|         \bigg\}\nonumber\\
&\quad      \leq C(1+t)^{-\frac{1}{2}}\left(
\|f_0\|_{L_v^2(H^N)}
+
\|u_0\|_{H^N}
\right),   
\end{align}
for any $t\geq 0$. Here, \(C>0\) is independent of \(t\).
\end{thm}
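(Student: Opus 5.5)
The plan is to combine local existence with a uniform a priori estimate in the energy space $H^N\times L^2_v(H^N)$, and then close by continuity. Local well-posedness for \eqref{A1}--\eqref{A-1} and the positivity $F\ge0$ follow from standard iteration: linear Fokker--Planck and Euler solves, a contraction in a lower norm, and the maximum principle for the linear equation in $F$. Uniqueness comes from an $L^2$ energy estimate on the difference of two solutions.

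For the a priori estimate I apply $\partial_x^\alpha$, $|\alpha|\le N$, to \eqref{A1} and take the $L^2_{x,v}\times L^2_x$ inner product with $(\partial^\alpha f,\partial^\alpha u)$. The linear drag terms $-u\cdot v\sqrt M$ in the kinetic equation and $+u-J(f)$ in the fluid equation cancel in the cross term. The result is the exact dissipation $\mathcal D(\partial^\alpha\widehat f,\partial^\alpha\widehat u)$ of \eqref{intro-relative-dissipation}, integrated in $\xi$. The nonlinear terms $u\cdot\nabla_v f$, $\tfrac12 u\cdot v f$, $u\int\sqrt M f$ and $u\cdot\nabla u$ carry no $v$-derivative after integration by parts in $v$; the commutator in $u\cdot\nabla u$ is handled by the Kato--Ponce estimate together with $\nabla\cdot u=0$. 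Because $N\ge4$, these terms are bounded by $C\sqrt{E_N}\,D_N$, using $\|u\|_{W^{1,\infty}}\lesssim\|u\|_{H^3}$ and the $\nu$-norm to absorb the weights $v f$ and $\nabla_v f$. Here the dissipation has to include $\|\nabla_x u\|_{H^{N-1}}$ and $\|\nabla_x a\|_{H^{N-1}}$ for this to close. Those quantities are supplied by the compensator. I take the Fourier transform and pair the kinetic equation with $-\mathrm{i}\,\tfrac{|\xi|}{1+|\xi|^2}\,S(\omega)\widehat{\partial^\alpha f}$, for $|\alpha|\le N-1$ (or with the Kawashima weight at all orders). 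Using \eqref{G1.8}, the time derivative term is bounded by the energy, since $S$ is bounded. This gives
\[
\frac{d}{dt}\,\mathcal E_N+c\,D_N\le C\sqrt{\mathcal E_N}\,D_N,
\]
with $\mathcal E_N\simeq\|f\|_{L^2_v(H^N)}^2+\|u\|_{H^N}^2$. Here $D_N$ controls $\tfrac{|\xi|^2}{1+|\xi|^2}(|\widehat a|^2+|\widehat u|^2)$, the full $|\{\mathbf I-\mathbf P\}\widehat f|_\nu^2$ weighted at all orders, and $|\widehat u-J_\perp|^2$ at zero order. Smallness of $\varepsilon$ then yields \eqref{TA2} and global existence.

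For the decay \eqref{TA3a} I build the positive-order functional $\mathcal E_N^1$, which uses the same construction restricted to $1\le|\alpha|\le N$ together with the compensator at $1\le|\alpha|\le N-1$. It satisfies $\tfrac{d}{dt}\mathcal E_N^1+cD_N^1\le0$, but $D_N^1$ does not control $\|\nabla(a,u)\|^2$ at low frequency. I avoid interpolation with an $L^1$ norm by using the time-weighted trick: multiply by $(1+t)$ and integrate. This gives
\[
(1+t)\mathcal E_N^1(t)\le \mathcal E_N^1(0)+\int_0^t\mathcal E_N^1\,ds.
\]
The key observation is that $\mathcal E_N^1$ restricted to orders $1\le|\alpha|\le N-1$ is bounded by the full dissipation $D_N$. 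Indeed, $\tfrac{|\xi|^2}{1+|\xi|^2}|\xi|^{2(|\alpha|-1)}\gtrsim$ the needed weight when $|\alpha|\ge1$, and the top order $N$ is lost, which is why \eqref{TA3a} stops at $N-1$. So I use the functional over orders $1$ to $N-1$, and its time integral is bounded by $\int_0^\infty D_N\lesssim\varepsilon^2$ from \eqref{TA2}. Hence $\mathcal E^1_{N-1}(t)\lesssim\varepsilon^2(1+t)^{-1}$. The pointwise bound \eqref{TA3} then follows from $\|g\|_{L^\infty_x}\lesssim\|\nabla g\|^{1/2}\|\nabla^2 g\|^{1/2}$, applied to $\partial^\alpha u$ and to $\|\partial^\alpha f(\cdot,x,\cdot)\|_{L^2_v}$ (Minkowski) with $|\alpha|\le N-3$. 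This involves only derivatives of order between $1$ and $N-1$, so the factor $(1+t)^{-1/2}$ carries over.

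The main obstacle is the nonlinear control in the positive-order estimate. The terms with $\alpha$ landing partly on zero-order $u$ or $f$ produce $\|u\|_{L^\infty}$ or $\|f\|$ factors that do not decay. These have to be bounded by $\sqrt{\mathcal E_N}\,D_N^1$ rather than by $\mathcal E_N^1$. I would try first to place all derivatives so that the undifferentiated factor is in $L^\infty$ (controlled by $\sqrt{\mathcal E_N}$), while the differentiated factors land in $D_N^1$. I expect to use the dissipative structure, specifically that $u$ appears paired with microscopic or $v$-weighted parts of $f$, to make this work.
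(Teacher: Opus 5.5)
\paragraph{Main gap: the positive-order nonlinear estimate.}
Your decay step rests on $\frac{d}{dt}\mathcal E_N^1+cD_N^1\le 0$, but that is only the linearized statement. The nonlinear bound you postpone to your last paragraph cannot be obtained by the tactic you describe (undifferentiated factor in $L^\infty$ at size $\sqrt{\mathcal E_N}$, differentiated factors in $D_N^1$). The reason is that $D_N^1$ contains no first-order derivative of $u$, $a(f)$ or $J(f)$. At order one it only sees $\nabla_x(u-J(f))$ and $\nabla_x\{\mathbf I-\mathbf P_0\}f$; $u$, $\mathbf Pf$ and $\{\mathbf I-\mathbf P\}f$ enter only from order two on, since the compensated low-frequency weight is $|\xi|^4$. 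So at $|\alpha|=1$ the following are not of the form $\sqrt{\mathcal E_N}\,D_N^1$ by your argument:
\begin{itemize}
\item the term $\int a(f)\,\nabla_xu\cdot\nabla_x(J(f)-u)\,dx$;
\item the pairing of $\nabla_xu\,J(f)$ with $\nabla_x\{\mathbf I-\mathbf P_0\}f$;
\item above all, the Euler term $\int\partial_iu_j\,\partial_ju_k\,\partial_iu_k\,dx$, which has no undifferentiated factor at all; Kato--Ponce only gives $\|\nabla_xu\|_{L^\infty}\|\nabla_xu\|_{L^2}^2\lesssim\sqrt{D_N^1}\,\mathcal E_N^1$.
\end{itemize}

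The paper does not aim for a nonpositive right-hand side. In Theorem~\ref{thm:positive-energy} one factor is placed in the \emph{full} dissipation $\mathcal D_N$, which does contain $\|\nabla_x(a(f),u)\|_{H^{N-1}}$ and hence $\|a(f)\|_{L^\infty}+\|u\|_{L^\infty}\lesssim\sqrt{\mathcal D_N}$. The $|\alpha|=1$ Euler term is bounded by $\epsilon\|\nabla_x^2u\|_{L^2}^2+C\|\nabla_xu\|_{L^2}^6$. This gives $\frac{d}{dt}\mathcal E_N^{(1)}+\lambda_5\mathcal D_N^{(1)}\le C\bigl(\mathcal D_N+(\mathcal E_N^{(1)})^2\bigr)\mathcal E_N^{(1)}$. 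The coefficient lies in $L^1(0,\infty)$ because $\int_0^\infty\mathcal D_N\,dt<\infty$ and $\int_0^\infty(\mathcal E_N^{(1)})^2dt\le\sup_t\mathcal E_N^{(1)}\int_0^\infty\mathcal E_N^{(1)}dt$, and Lemma~\ref{lem:decay-Gronwall} then gives $(1+t)^{-1}$.

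Your $(1+t)$-weighting works equally well once you accept such a right-hand side. For $z=(1+t)y$ one has $z'\le y+a(t)z$, hence $(1+t)y(t)\le e^{\int_0^\infty a\,dt}\bigl(y(0)+\int_0^\infty y\,dt\bigr)$. What is missing is the correct differential inequality, not the time-weight argument. If you want to keep the form $\sqrt{\mathcal E_N}\,D_N^1$, you need a different device than $L^\infty$ placement. One option is $L^3\times L^6$ pairings with $\|\nabla_xh\|_{L^6}\lesssim\|\nabla_x^2h\|_{L^2}$, which move the lone first derivative onto a dissipated second derivative. For the Euler term you would also use $(\nabla_x(u\cdot\nabla_xu),\nabla_xu)=-(u\cdot\nabla_xu,\Delta_xu)$.

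\paragraph{Secondary gap: the zero-order structure of the global estimate.}
The same zero-frequency degeneracy is glossed over in your global estimate. At zero order, $\mathcal D_N$ controls none of $a(f)$, $u$, $J(f)$ separately. Consequently two terms are individually \emph{not} bounded by $\sqrt{\mathcal E_N}\,\mathcal D_N$: the term $\int a(f)\,u\cdot J(f)\,dx$ contained in $\frac12(u\cdot v f,f)_{x,v}$, and the drag term $-\int a(f)|u|^2\,dx$. Placing $u$ in $L^\infty$ via the energy, as you propose, leaves two undissipated factors in each.

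The estimate closes only through the cancellation in Lemma~\ref{lem:N1}. Expand the kinetic and drag nonlinearities with the same Leibniz coefficients. Their $\mathbf Pf$-contributions then combine into $\int a(\partial_x^{\alpha-\beta}f)\,\partial_x^\beta u\cdot\overline{(J(\partial_x^\alpha f)-\partial_x^\alpha u)}\,dx$, which is paired with the dissipated relative momentum. The $\{\mathbf I-\mathbf P\}f$-contributions are orthogonal to $\mathcal W$, so they pair only with $\{\mathbf I-\mathbf P_0\}\partial_x^\alpha f$. You should build this cancellation into the plan explicitly rather than appeal to $\nu$-norm absorption.
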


\begin{rem}
The dissipation functional in Theorem~\ref{Th1} is defined by
\begin{align}\label{TB1}
\mathcal D_N(t):={}&\,
\|u(t)-J(f(t))\|_{H^N}^2
+\|\{\mathbf I-\mathbf P_0\}f(t)\|_\nu^2
\nonumber\\
&+\sum_{1\leq|\alpha|\leq N}
\|\partial_x^\alpha\{\mathbf I-\mathbf P\}f(t)\|_\nu^2
+\|\nabla_x\mathbf P f(t)\|_{L_v^2(H^{N-1})}^2
+\|\nabla_xu(t)\|_{H^{N-1}}^2.
\end{align}
At zero spatial-derivative order, the kinetic part of \eqref{TB1} contains only the four-moment remainder
\(\{\mathbf I-\mathbf P_0\}f\), which is weaker than the full microscopic
dissipation since
\begin{align*}
\|\{\mathbf I-\mathbf P_0\}f\|_\nu
\lesssim
\|\{\mathbf I-\mathbf P\}f\|_\nu.    
\end{align*} 
The missing momentum mode is controlled through the relative momentum \(u-J(f)\).
The projection denoted by \(\mathbf P\) in \cite{CDM-KRM-2011,LNW-arxiv-2026} coincides with \(\mathbf P_0\) in the present notation.
\end{rem}

\begin{rem}
The argument used here differs from the macro-micro energy method employed in
\cite{CDM-KRM-2011}. Our proof is based on a modified compensating-function
method in Fourier space.
The global well-posedness result established in
\cite{CDM-KRM-2011} is complemented here by the pointwise decay estimate
\eqref{TA3}, obtained without any \(L^1\) or other low-frequency
assumption on the initial data.
\end{rem}

\begin{rem}
Compared with the Sobolev framework used in \cite{YangYu2010},
our a priori estimates require only spatial derivatives of the kinetic perturbation, namely \(f_0\in L_v^2(H^N)\); no mixed \(x\)-\(v\) derivatives enter the global energy argument.
\end{rem}

\begin{thm}\label{Th2}
Under the assumptions of Theorem \ref{Th1}, the global classical
solution $(u,f)$ to system \eqref{A1}--\eqref{A-1} satisfies
\begin{align}\label{TA4}
&\|u(t)-J(f(t))\|_{L^2}
+\big\|\{\mathbf I-\mathbf P_0\}f(t)\big\|_{L_{x,v}^2}
\leq
C(1+t)^{-\frac{1}{2}}
\left(
\|f_0\|_{L_v^2(H^N)}+\|u_0\|_{H^N}
\right),
\end{align}
for any $t\geq 0$.  Here, \(C>0\) is independent of \(t\).
\end{thm}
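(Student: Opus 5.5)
Our plan is to prove \eqref{TA4} with a zero-order Lyapunov-type inequality for the dissipative variables. The positive-order decay \eqref{TA3a} from Theorem \ref{Th1} and the uniform bound \eqref{TA2} are taken as given. The zero-order energy itself does not decay, so we will not estimate it directly. Instead, we exploit the structure of \eqref{A1}: both \(w:=u-J(f)\) and \(g:=\{\mathbf I-\mathbf P_0\}f\) satisfy damped equations, with forcing that involves only spatial derivatives and nonlinear terms.

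\emph{Step 1: equations for the dissipative variables.} Taking the \(v\sqrt M\)-moment of \eqref{A1}\(_3\) gives
\[
\partial_t J(f)+\nabla_x\cdot\langle v\otimes v\sqrt M,f\rangle_v=-J(f)+u+\text{(nonlinear terms)},
\]
using \(\langle \mathcal Lf,v\sqrt M\rangle_v=-J(f)\). Subtracting this from \eqref{A1}\(_1\) yields
\[
\partial_t w+2w=\nabla_x\cdot\langle v\otimes v\sqrt M,f\rangle_v-\nabla_xP-u\cdot\nabla_xu+N_1,
\]
where \(N_1\) collects the quadratic terms (for instance \(-ua(f)\) and the moments of \(u\cdot\nabla_vf-\tfrac12 u\cdot vf\), which reduce to terms like \(a(f)u\)). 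Similarly, applying \(\{\mathbf I-\mathbf P_0\}\) to \eqref{A1}\(_3\) gives
\[
\partial_t g-\mathcal Lg=-\{\mathbf I-\mathbf P_0\}(v\cdot\nabla_xf)+\{\mathbf I-\mathbf P_0\}N_2.
\]
The linear source \(u\cdot v\sqrt M\) lies in \(\mathcal W\) and is annihilated. Moreover, \(\mathcal L\) commutes with \(\mathbf P_0\), since \(\mathcal W\) is spanned by eigenfunctions of \(\mathcal L\).

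\emph{Step 2: energy inequality.} Taking the \(L^2\) inner product of each equation with \(w\) and \(g\) respectively, and using the coercivity \(-\langle\mathcal Lg,g\rangle\ge\lambda_1|g|_\nu^2\), we expect to get
\[
\frac{d}{dt}\big(\|w\|^2+\|g\|^2\big)+c\big(\|w\|^2+\|g\|^2\big)\lesssim \|\nabla_xf\|_{L^2_{x,v}}^2+\|\nabla_xP\|^2+\|u\cdot\nabla_xu\|^2+\|N_1\|^2+|\langle N_2,g\rangle|.
\]
For the pressure, \(\Delta P=-\nabla\cdot(u\cdot\nabla u)+\nabla\cdot(N_1+J(f))\). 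Hence \(\nabla P\) is bounded by \(\|\nabla_x\mathbf P_0 f\|\) plus quadratic terms of the form "(small \(H^N\) norm) \(\times\) (a gradient)". The velocity-weighted nonlinear term \(u\cdot vf\) against \(g\) is handled by the \(\nu\)-norm and the bound \(\|u\|_{L^\infty}\lesssim\|\nabla u\|_{H^1}\). To make every nonlinear term carry a derivative, we will use \(\|u\|_{L^\infty}\lesssim\|\nabla u\|_{L^2}^{1/2}\|\nabla^2u\|_{L^2}^{1/2}\) together with \(\|a(f)\|_{L^2}\le C\varepsilon\). Then each term is bounded by \(C\varepsilon\) times a derivative norm, or by \(\|w\|\|\cdot\|\) terms that are absorbed. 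By \eqref{TA3a} and \eqref{TA2}, the right-hand side is then \(\lesssim(1+t)^{-1}\)\(\times\)(data)\(^2\).

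\emph{Step 3: Gronwall.} With \(Y=\|w\|^2+\|g\|^2\), we have \(Y'+cY\lesssim(1+t)^{-1}E_0\). Hence
\[
Y(t)\le e^{-ct}Y(0)+C\int_0^te^{-c(t-s)}(1+s)^{-1}\,ds\,E_0\lesssim(1+t)^{-1}E_0,
\]
which is \eqref{TA4}.

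The main obstacle is the nonlinear terms that do not contain a spatial derivative, such as \(u\,a(f)\) and \(u\cdot vf\) paired with \(g\). The plan is to bound them by \(\|u\|_{L^\infty}\) (or \(\|u\|_{L^3}\)) times \(\varepsilon\), using Sobolev interpolation into the decaying quantity \(\|\nabla u\|_{H^1}\lesssim(1+t)^{-1/2}\). Any velocity growth is absorbed into \(\|g\|_\nu\), or into \(\|\nabla_x\{\mathbf I-\mathbf P\}f\|_\nu\)-type terms controlled by \eqref{TA2}.
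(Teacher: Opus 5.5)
Your overall plan is the paper's: damped equations for $u-J(f)$ and $\{\mathbf I-\mathbf P_0\}f$, forcing bounded via \eqref{TA2}, \eqref{TA3a} and \eqref{TA3}, then Gr\"onwall against $(1+t)^{-1}$. However, your treatment of the pressure in Step 2 is wrong, and as written $Y'+cY\lesssim(1+t)^{-1}E_0$ does not follow.

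From $\Delta P=\nabla_x\cdot J(f)+\cdots$ you get
\[
\nabla_xP=\nabla_x\Delta^{-1}\nabla_x\cdot J(f)+\cdots=(\mathbf I-\mathbb P)J(f)+\cdots,
\]
with $\mathbb P$ the Leray projection. This is a zero-order Riesz transform of $J(f)$. In Fourier variables it is $\omega\big(\omega\cdot J(\widehat f)\big)=J_\parallel(\widehat f)$, with no factor $|\xi|$. So it is not bounded by $\|\nabla_x\mathbf P_0f\|_{L^2_{x,v}}$; you have miscounted one derivative. Worse, since $\nabla_x\cdot u=0$, one has $(\mathbf I-\mathbb P)J(f)=-(\mathbf I-\mathbb P)w$ with $w=u-J(f)$. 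Hence $\|\nabla_xP\|_{L^2}$ contains $\|(\mathbf I-\mathbb P)w\|_{L^2}$. Moving $C\|\nabla_xP\|_{L^2}^2$ to the right-hand side by Young's inequality produces $C\|(\mathbf I-\mathbb P)w\|_{L^2}^2$ with a large constant, which cannot be absorbed into $c\|w\|_{L^2}^2$. This is a genuine gap, not a bookkeeping slip.

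The fix is to keep this linear term on the left. Write
\[
-\nabla_xP=(\mathbf I-\mathbb P)w+(\mathbf I-\mathbb P)\big(a(f)u+u\cdot\nabla_xu\big).
\]
Your equation then becomes $\partial_tw+(\mathbf I+\mathbb P)w=\cdots$, and
\[
\big((\mathbf I+\mathbb P)w,w\big)_x=\|w\|_{L^2}^2+\|\mathbb Pw\|_{L^2}^2\geq\|w\|_{L^2}^2.
\]
The damping coefficient $2$ is exactly what is needed: the pressure removes one unit of damping from the longitudinal part of $w$. This is what the paper obtains by Leray-projecting the fluid equation before subtracting the momentum equation; see \eqref{eq:sec5-relative-momentum}.

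With this correction, the rest of your argument agrees with the paper's. Two of your steps are valid and slightly simpler than the paper's moment decomposition:
\begin{itemize}
\item bounding the transport term by $\|\nabla_xf\|_{L^2_{x,v}}\|\{\mathbf I-\mathbf P_0\}f\|_\nu$;
\item bounding the terms $u\cdot\nabla_vf$ and $(u\cdot v)f$, paired with $\{\mathbf I-\mathbf P_0\}f$, by $\|u\|_{L^\infty}\|f\|_{L^2_{x,v}}\|\{\mathbf I-\mathbf P_0\}f\|_\nu$.
\end{itemize}
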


\begin{rem}
For both the incompressible and compressible Euler-VFP systems, the algebraic convergence of the solution obtained in
\cite{CDM-KRM-2011,DL-krm-2013} relies on additional assumptions on the initial data. Theorem \ref{Th2} reveals a different relaxation mechanism:
without any \(L^1\) or low-frequency assumption, $
u-J(f)$ and $\{\mathbf I-\mathbf P_0\}f$
still decay in the zero-order \(L^2\) norm at the rate
\((1+t)^{-1/2}\). This is consistent with the undamped common
particle-fluid momentum at zero frequency, since that mode is canceled
in \(u-J(f)\) and is absent from
\(\{\mathbf I-\mathbf P_0\}f\).
\end{rem}

 \subsection{Strategies in our proofs}

The argument combines the Fourier compensating framework developed in
\cite{Kawashima1990,Glassey,YangYu2010} with the particle-fluid energy
structure of system \eqref{A1}--\eqref{A-1}. For each \(\xi\neq0\), we
add to the direct Fourier energy the correction
\begin{align*}
-\frac{\kappa}{2}
\frac{|\xi|}{1+|\xi|^2}
\big\langle
\mathrm iS(\omega)\widehat f,\widehat f
\big\rangle_v,
\quad \text{with}\quad
\omega=\frac{\xi}{|\xi|}.
\end{align*}
Upon differentiating this term in time, the kinetic transport operator
produces
\begin{align*}
\kappa\frac{|\xi|^2}{1+|\xi|^2}
\operatorname{Re}
\big\langle
S(\omega)(v\cdot\omega)\widehat f,\widehat f
\big\rangle_v.
\end{align*}
Combining this contribution with the direct particle-fluid dissipation
\eqref{intro-relative-dissipation} and applying the modified coercivity
estimate \eqref{G1.8}, we arrive at the pointwise Fourier inequality
\eqref{eq:pointwise-Fourier-energy}.
More precisely, the direct energy identity provides zero-order dissipation for \(u-J(f)\) and
\(\{\mathbf I-\mathbf P_0\}f\), whereas the compensating term recovers the missing frequency-weighted control of $a(f)$, $u$ and \(\{\mathbf I-\mathbf P\}f\).
In addition, the resulting pointwise estimate yields the
frequency-weighted control of
\(\{\mathbf I-\mathbf P\}f\). Multiplying
\eqref{eq:pointwise-Fourier-energy} by
\((1+|\xi|^2)^N\) and integrating over \(\xi\), we obtain
\eqref{eq:Fourier-energy-ineq}, namely,
\begin{align}\label{NJKG3}
\frac{{\rm d}}{{\rm d}t}\mathcal E_N(t)
+
\lambda_2 \mathcal D_N(t)
\leq
\mathcal N_{1,N}(t)
+
C\mathcal N_{2,N}(t).
\end{align}
Furthermore, \eqref{eq:energy-equivalence} gives
\begin{align*}
\mathcal E_N(t)
\sim
\|f(t)\|_{L_v^2(H ^N)}^2+\|u(t)\|_{H ^N}^2,
\end{align*}
and the dissipative terms obtained after integration coincide with
\(\mathcal D_N(t)\) defined in \eqref{TB1}.

The nonlinear analysis focuses on the two source terms
\(\mathcal N_{1,N}(t)\) and \(\mathcal N_{2,N}(t)\) defined in
\eqref{eq:N1}--\eqref{eq:N2}. The main difficulty in estimating
\(\mathcal N_{1,N}(t)\) is that the kinetic nonlinearity and the
nonlinear drag term in the fluid equation must be treated simultaneously.
Applying the same spatial derivatives to both terms and expanding them with the same Leibniz coefficients allows the density contributions to be reorganized in terms of the relative momentum \(u-J(f)\). The remaining
kinetic contributions are paired with
\(\{\mathbf I-\mathbf P_0\}f\) and are therefore controlled by the
zero-order microscopic dissipation. For the Euler transport term
\(u\cdot\nabla_xu\), the principal contribution vanishes by the
divergence-free condition \(\nabla_x\cdot u=0\), leaving only
commutator terms. Applying the Sobolev product and commutator estimates
in Lemmas~\ref{L2.1}--\ref{L2.2} to these terms gives
(see \eqref{eq:N1-bound})
\begin{align}\label{NJKG1} 
|\mathcal N_{1,N}(t)|
\leq
C\sqrt{\mathcal E_N(t)}\,\mathcal D_N(t).
\end{align}

After integration by parts in \(v\), the finite velocity moments
appearing in \(\mathcal N_{2,N}(t)\) reduce to products involving
\(u\), \(a(f)\), and \(J(f)\). This yields
(see \eqref{eq:N2-bound})
\begin{align}\label{NJKG2}
\mathcal N_{2,N}(t)
\leq
C\mathcal E_N(t) \mathcal D_N(t).
\end{align}
Substitution of \eqref{NJKG1} and \eqref{NJKG2} into
\eqref{NJKG3}  gives the closed energy inequality
(see \eqref{eq:closed-energy})
\begin{align*} 
\frac{{\rm d}}{{\rm d}t}\mathcal E_N(t)
+
\lambda_4\mathcal D_N(t)
\leq
C\sqrt{\mathcal E_N(t)}\,\mathcal D_N(t).
\end{align*}
A continuity argument then yields the uniform estimate \eqref{TA2}.
Only spatial derivatives are applied to the kinetic equation. Since
\(\partial_x^\alpha\) commutes with \(\mathcal L\), the term
\(-\langle \mathcal L\partial_x^\alpha f,\partial_x^\alpha f\rangle_v\)
 controls
\(\partial_x^\alpha\{\mathbf I-\mathbf P\}f\) in the velocity-dissipation norm, including
\(\nabla_v\partial_x^\alpha \{\mathbf I-\mathbf P\} f\).  
Hence, no mixed \(x\)-\(v\) derivative estimates are required.

For the decay estimate, we follow the positive-order energy method of
\cite{YangYu2010}. The functional
\(\mathcal E_N^{(1)}(t)\), defined in \eqref{eq:positive-energy}, is
equivalent to the squared \(L^2\) norms of the positive spatial derivatives of
\((u,f)\). Theorem~\ref{thm:positive-energy} gives
\begin{align*}
\frac{{\rm d}}{{\rm d}t}\mathcal E_N^{(1)}(t)
+
\lambda_5\mathcal D_N^{(1)}(t)
\leq
C\mathcal D_N(t)\mathcal E_N^{(1)}(t)
+
C\bigl(\mathcal E_N^{(1)}(t)\bigr)^3.
\end{align*}
Since
\(
\mathcal E_N^{(1)}(t)
\lesssim
\mathcal D_N(t)
\)
by \eqref{eq:Eplus-by-D}, the global energy estimate implies that
\(\mathcal E_N^{(1)}\) is integrable over \((0,\infty)\). Setting
\(y(t)=\mathcal E_N^{(1)}(t)\), we obtain
\begin{align*}
 y'(t)
\leq
C\bigl(\mathcal D_N(t)+y(t)^2\bigr)y(t).   
\end{align*}
Here, $y\in L^1(0,\infty)$ and \(\mathcal D_N(t)+y(t)^2\in L^1(0,\infty)\) because $y$ is uniformly bounded.
Applying the
Gr\"{o}nwall-type lemma in \cite{Dk-MZ-1992,YangYu2010}, we obtain
\begin{align*}
\mathcal E_N^{(1)}(t)
\leq
C(1+t)^{-1}\mathcal E_N(0),
\end{align*}
for any $t\geq 0$.
The equivalence of \(\mathcal E_N^{(1)}(t)\) with the positive-order
energy then yields the \(L^2\) decay estimate
\eqref{eq:positive-L2-decay}. The pointwise estimate \eqref{TA3} follows
from the Sobolev inequality stated in Lemma~\ref{L2.1}. The 
$L^2$ decay result \eqref{TA3a}
is formulated only for positive spatial derivatives because the common
particle-fluid momentum mode remains undamped at zero spatial frequency.

To prove Theorem~\ref{Th2}, we use the zero-order damping of the directly
dissipative variables \(u-J(f)\) and
\(\{\mathbf I-\mathbf P_0\}f\). Taking the \(v\sqrt M\)-moment of the
kinetic equation \eqref{A1}$_3$ and subtracting the resulting equation from the
Leray-projected fluid equation \eqref{A1}$_1$ yields the estimate
\eqref{eq:sec5-relative-energy} for \(u-J(f)\). Applying
the operator \(\{\mathbf I-\mathbf P_0\}\) to \eqref{A1}$_3$ and using the spectral gap
of the Fokker-Planck operator gives
\eqref{eq:sec5-microscopic-energy}. Combining these two estimates with
the positive-order decay and the uniform energy bound, we obtain
\eqref{eq:sec5-forcing-decay}, namely,
\begin{align*}
\frac{{\rm d}}{{\rm d}t}
\big\{
\|u-J(f)\|_{L^2}^2
+\big\|\{\mathbf I-\mathbf P_0\}f\big\|_{L_{x,v}^2}^2
\big\}
+\lambda_6
\big\{
\|u-J(f)\|_{L^2}^2
+\big\|\{\mathbf I-\mathbf P_0\}f\big\|_{L_{x,v}^2}^2
\big\}
\leq C(1+t)^{-1}\mathcal E_N(0).
\end{align*}
Applying Gr\"{o}nwall's inequality, we obtain \eqref{TA4}. This reveals
a  zero-order relaxation phenomenon: although the complete
zero-order energy admits no uniform algebraic decay rate, the directly
dissipative variables still decay at the rate
\((1+t)^{-1/2}\) without any additional low-frequency assumption.

\subsection{Outline of this paper}
The remainder of the paper is organized as follows. In Section~2, we
introduce the notation, construct the modified compensating operator,
establish its coercivity, and derive the compensated Fourier energy
inequality. Several Sobolev, product, and commutator estimates used in the
subsequent analysis are collected at the end of the section. In
Section~3, we estimate the nonlinear terms and prove the global existence
of classical solutions to the Cauchy problem
\eqref{A1}--\eqref{A-1}. Section~4 is devoted to the positive-order
\(L^2\) decay and the corresponding pointwise decay estimate
\eqref{TA3}, thereby completing the proof of
Theorem~\ref{Th1}. In Section~5, we establish the zero-order \(L^2\)
decay of the directly dissipative variables \(u-J(f)\) and
\(\{\mathbf I-\mathbf P_0\}f\), and complete the proof of
Theorem~\ref{Th2}.

\section{Preliminaries}
In this section, we introduce the   notations,  the compensating function method for the Euler-VFP system \eqref{A1}, along with a set of useful lemmas that will be frequently employed throughout this paper.

\subsection{Notations}

Throughout the paper, \(C>0\) denotes a generic constant independent of
time, while \(C_\eta>0\) denotes a constant depending only on
\(\eta>0\); both may vary from line to line. For nonnegative quantities
\(A\) and \(B\), we write \(A\lesssim B\) if \(A\leq CB\).
The notation \(A\sim B\) means that both \(A\lesssim B\) and
\(B\lesssim A\) hold. For a Banach space \(X\) and \(h_1,h_2\in X\), we set
\begin{align*}
\|(h_1,h_2)\|_X:=\|h_1\|_X+\|h_2\|_X.
\end{align*}
For an integrable function \(h=h(x)\), its Fourier transform with respect to
the spatial variable is defined by
\begin{align*}
\widehat h(\xi)
=
\mathcal F_x h(\xi)
:=
\int_{\mathbb R^3}
e^{-ix\cdot\xi}h(x)\,{\rm d}x,
\qquad
x\cdot\xi=\sum_{j=1}^3x_j\xi_j,
\end{align*}
where \(i=\sqrt{-1}\). For \(h=h(x,v)\),
\(\widehat h=\widehat h(\xi,v)\) always denotes the Fourier transform with
respect to \(x\) only.

We use \(\langle\cdot,\cdot\rangle_v\) and
\((\cdot,\cdot)_{x,v}\) to denote the standard complex inner products in
\(L^2(\mathbb R_v^3)\) and
\(L^2(\mathbb R_x^3\times\mathbb R_v^3)\), respectively.
All complex inner products are taken to be linear in the first entry.
The corresponding norms are denoted by
\(|\cdot|_{L_v^2}\) and \(\|\cdot\|_{L_{x,v}^2}\), respectively.
For functions depending only on \(x\), we write \(\|\cdot\|_{L^2}\). 

To describe the dissipation generated by the Fokker--Planck operator
\(\mathcal L\), we set
\begin{align*}
\nu(v):=1+|v|^2.
\end{align*}
For \(h=h(v)\), define
\begin{align*}
|h|_\nu^2
:=
\int_{\mathbb R^3}
\left(
|\nabla_vh(v)|^2+\nu(v)|h(v)|^2
\right)\,{\rm d}v,
\end{align*}
whereas for \(h=h(x,v)\), we use
\begin{align*}
\|h\|_\nu^2
:=
\int_{\mathbb R^3}\!\!\int_{\mathbb R^3}
\left(
|\nabla_vh(x,v)|^2+\nu(v)|h(x,v)|^2
\right)\,{\rm d}v{\rm d}x.
\end{align*}

Let \(\alpha=(\alpha_1,\alpha_2,\alpha_3)\) be a multi-index. We denote
\begin{align*}
\partial_x^\alpha
:=
\partial_{x_1}^{\alpha_1}
\partial_{x_2}^{\alpha_2}
\partial_{x_3}^{\alpha_3},
\qquad
|\alpha|:=\alpha_1+\alpha_2+\alpha_3.
\end{align*}
For brevity, for each \(i=1,2,3\), we write \(\partial_i\) instead of
\(\partial_{x_i}\).   The Sobolev norms involving only spatial derivatives are
defined by
\begin{align*}
\|h\|_{L_v^2(H^m)}^2:=
\sum_{|\alpha|\leq m}
\|\partial_x^\alpha h\|_{L_{x,v}^2}^2, \qquad
\|h\|_{H^m}^2
:=
\sum_{|\alpha|\leq m}
\|\partial_x^\alpha h\|_{L^2}^2.
\end{align*}

For later use, we set
\begin{align*}
e_1=\sqrt M,
\qquad
e_{i+1}=v_i\sqrt M,
\quad 1\leq i\leq3.
\end{align*}
The family \(\{e_j\}_{j=1}^4\) is an orthonormal basis of the
four-moment space \(\mathcal W\) defined in \eqref{G1.8WW}.

For a phase-space function $h=h(x,v)$, we set
\begin{align*}
a(  h)
=
\langle  h,\sqrt M\rangle_v,
\qquad
J(  h)
=
\langle  h,v\sqrt M\rangle_v,  
\end{align*}
and define 
\begin{align*}
\mathbf{P}h=a(h)\sqrt{M},\qquad \mathbf{P}_0h=   a(h)\sqrt{M}+ J(h)\cdot v\sqrt{M}.
\end{align*}
The same notation is used after Fourier transformation in $x$.

\subsection{The compensating operator}

Following the construction of the compensating function for the Boltzmann equation in \cite{Glassey,Kawashima1990} and its kinetic formulation in
\cite[Lemma 2.1]{YangYu2010}, we construct a compensating operator adapted
to the Euler-VFP system \eqref{A1}. The construction combines the classical four-moment component with a finite-rank correction based on second-order Hermite profiles.

  Let $\omega = (\omega_1, \omega_2, \omega_3)^T \in \mathbb{S}^2$.
The compression of multiplication by \(v\cdot\omega\) to the four-moment
space \(\mathcal W\) is represented by
\begin{align*}
V(\omega)
&:=
\left\{
\left\langle
(v\cdot\omega)e_\ell,e_k
\right\rangle_v
\right\}_{k,\ell=1}^4
 =
\begin{pmatrix}
0&\omega_1&\omega_2&\omega_3\\
\omega_1&0&0&0\\
\omega_2&0&0&0\\
\omega_3&0&0&0
\end{pmatrix}.
\end{align*}
Following \cite[Lemma 2.1]{YangYu2010}, we introduce the following skew-symmetric matrix:
\begin{align*}
R(\omega)
=
\begin{pmatrix}
0&\omega_1&\omega_2&\omega_3\\
-\omega_1&0&0&0\\
-\omega_2&0&0&0\\
-\omega_3&0&0&0
\end{pmatrix}.
\end{align*}
Since \(|\omega| = 1\), direct multiplication gives
\begin{align*}
R(\omega)V(\omega)
&=
\begin{pmatrix}
1&0&0&0\\
0&-\omega_1^2&-\omega_1\omega_2&-\omega_1\omega_3\\
0&-\omega_2\omega_1&-\omega_2^2&-\omega_2\omega_3\\
0&-\omega_3\omega_1&-\omega_3\omega_2&-\omega_3^2
\end{pmatrix}=
\begin{pmatrix}
1&0\\
0&-\omega\otimes\omega
\end{pmatrix},
\end{align*}
which implies that
\begin{align}\label{eq:RV}
\operatorname{Re}
\bigl(
R(\omega)V(\omega)W,W
\bigr)_{\mathbb C^4}
=
|W_1|^2
-
|\omega_1W_2+\omega_2W_3+\omega_3W_4|^2,
\end{align}
for any \(W=(W_1,W_2,W_3,W_4)^T\in\mathbb C^4\).
Set $R(\omega)=\big(r_{k\ell}(\omega)\big)_{1\leq k,\ell\leq4}$. Then, we define
\begin{align}\label{eq:S1}
S^{(1)}(\omega)f
=
\gamma_1
\sum_{k,\ell=1}^4
r_{k\ell}(\omega)
\langle f,e_\ell\rangle_v e_k,
\end{align}
where \(\gamma_1>0\) will be chosen later. In particular, if we set
\begin{align*}
W(f):=\big(a(f),J_1(f),J_2(f),J_3(f)\big)^T,\end{align*} 
then it can be deduced from \eqref{eq:RV} that
\begin{align*}
\operatorname{Re}
\bigl(
R(\omega)V(\omega)W(f),W(f)
\bigr)_{\mathbb C^4}
=
|a(f)|^2-|J_\parallel(f)|^2.
\end{align*}
Thus, \(S^{(1)}(\omega)\) contributes positively to the density mode and negatively to the longitudinal momentum mode, while leaving \(J_\perp(f)\) uncontrolled. To compensate for this missing
transverse component, we define the profiles \(\phi_m\) by
\begin{align}\label{eq:phi}
\phi_m(v;\omega)
=
(v\cdot\omega)(\Pi_\omega v)_m\sqrt M,
\qquad
1\leq m\leq3,
\end{align}
where
\[
\Pi_\omega:={\rm Id}-\omega\otimes\omega.
\]
We further define
\begin{align}\label{eq:S2}
S^{(2)}(\omega)f
=
\gamma_2\sum_{m=1}^3
\left\{
\langle f,\phi_m\rangle_v e_{m+1}
-
\langle f,e_{m+1}\rangle_v\phi_m
\right\},
\end{align}
for some constant \(\gamma_2>0\). The full compensating operator is defined by
\begin{align}\label{eq:S}
S(\omega)
=
S^{(1)}(\omega)+S^{(2)}(\omega).
\end{align}

We next establish the basic properties of the profiles \(\phi_m\).

\begin{lem}\label{lem:phi-S}
For any \(\omega\in\mathbb S^2\) and \(1\leq m,j\leq3\), the profiles
\(\phi_m\) defined in \eqref{eq:phi} have the following properties:
\begin{enumerate}
\item[\rm (i)]
Each \(\phi_m\) belongs to \(\mathcal W^\perp\) and is an eigenfunction of
the Fokker-Planck operator corresponding to the eigenvalue \(-2\), namely,
\begin{align}\label{eq:phi-properties}
\phi_m\perp\mathcal W,
\qquad
\mathcal L\phi_m=-2\phi_m.
\end{align}

\item[\rm (ii)]
The following moment identities hold:
\begin{align}\label{eq:phi-transport}
\left\langle
e_{j+1},(v\cdot\omega)\phi_m
\right\rangle_v
=
(\Pi_\omega)_{mj},
\qquad
\sum_{m=1}^3\omega_m\phi_m=0.
\end{align}
\end{enumerate}
\end{lem}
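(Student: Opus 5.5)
The plan is to reduce everything to explicit Gaussian moment computations by writing each profile in Hermite form. Since $(\Pi_\omega v)_m=\sum_{j}(\Pi_\omega)_{mj}v_j$, we have
\begin{align*}
\phi_m=\sum_{j,k=1}^3\omega_k(\Pi_\omega)_{mj}\,v_kv_j\sqrt M .
\end{align*}
The key observation is that the coefficient matrix $B^{(m)}_{kj}:=\omega_k(\Pi_\omega)_{mj}$ satisfies $\operatorname{tr}B^{(m)}=\sum_k\omega_k(\Pi_\omega)_{mk}=(\Pi_\omega\omega)_m=0$. Hence $\phi_m$ is a trace-free quadratic form in $v$ times $\sqrt M$. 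Such a function is a combination of $(v_kv_j-\delta_{kj})\sqrt M$, because the $\delta_{kj}$ terms sum to the trace, which vanishes. Thus $\phi_m$ lies in the span of second-order Hermite functions.

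For part (i), orthogonality to $\mathcal W$ follows by parity and Gaussian moments. The inner products $\langle\phi_m,v_i\sqrt M\rangle_v$ are odd moments of $M$ and therefore vanish. For the density mode,
\begin{align*}
\langle\phi_m,\sqrt M\rangle_v=\sum_{k,j}B^{(m)}_{kj}\int v_kv_jM\,{\rm d}v=\operatorname{tr}B^{(m)}=0 .
\end{align*}
For the eigenvalue relation, I would use the conjugated form $\mathcal Lf=\frac1{\sqrt M}\nabla_v\cdot(M\nabla_v(f/\sqrt M))$ from \eqref{G1.5}. With $g=f/\sqrt M$ this gives $\mathcal L(g\sqrt M)=\sqrt M(\Delta_v g-v\cdot\nabla_v g)$, the Ornstein--Uhlenbeck operator. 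Take $g=v^TBv$ with $B$ symmetrized, which does not change the trace. Then $\Delta_v g=2\operatorname{tr}B=0$ and $v\cdot\nabla_v g=2g$ by Euler's identity for homogeneous polynomials of degree $2$. Therefore $\mathcal L\phi_m=-2\phi_m$.

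For part (ii), the second identity is immediate:
\begin{align*}
\sum_m\omega_m\phi_m=(v\cdot\omega)\,\bigl(\omega\cdot\Pi_\omega v\bigr)\sqrt M=0,
\end{align*}
since $\Pi_\omega\omega=0$ and $\Pi_\omega$ is symmetric. For the first identity, compute
\begin{align*}
\langle e_{j+1},(v\cdot\omega)\phi_m\rangle_v=\int v_j(v\cdot\omega)^2(\Pi_\omega v)_m M\,{\rm d}v .
\end{align*}
I would evaluate this with the fourth-moment formula
\begin{align*}
\int v_av_bv_cv_dM\,{\rm d}v=\delta_{ab}\delta_{cd}+\delta_{ac}\delta_{bd}+\delta_{ad}\delta_{bc}.
\end{align*}
Expanding $(v\cdot\omega)^2=\sum_{c,d}\omega_c\omega_d v_cv_d$ and $(\Pi_\omega v)_m=\sum_b(\Pi_\omega)_{mb}v_b$, the three pairings contribute as follows. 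The pairing $\delta_{jb}\delta_{cd}$ gives $|\omega|^2(\Pi_\omega)_{mj}=(\Pi_\omega)_{mj}$. The two remaining pairings each give $\omega_j(\Pi_\omega\omega)_m=0$. The total is exactly $(\Pi_\omega)_{mj}$.

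The main obstacle is only bookkeeping: getting the index pairings in the fourth moment right and checking that the trace-free condition holds for the non-symmetric coefficient matrix $B^{(m)}$. Symmetrizing $B^{(m)}$ before applying the Euler-identity argument handles the latter.
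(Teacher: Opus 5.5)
Your proof is correct and follows the paper's argument essentially step by step: the trace-free observation $\sum_k\omega_k(\Pi_\omega)_{mk}=(\Pi_\omega\omega)_m=0$ gives orthogonality to $e_1$, parity gives orthogonality to $e_{j+1}$, the fourth-moment pairing computation gives $\langle e_{j+1},(v\cdot\omega)\phi_m\rangle_v$, and $\Pi_\omega\omega=0$ gives $\sum_m\omega_m\phi_m=0$. The only difference is cosmetic and lies in the eigenvalue step: the paper expands $\phi_m$ in $H_{pq}=(v_pv_q-\delta_{pq})\sqrt M$ and uses $\mathcal L H_{pq}=-2H_{pq}$, whereas you apply the Ornstein--Uhlenbeck form $\sqrt M(\Delta_v g-v\cdot\nabla_v g)$ directly to the trace-free quadratic $g$, which is the same calculation (and $\Delta_v(v^{T}Bv)=2\operatorname{tr}B$ holds even without symmetrizing $B$).
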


\begin{proof}
We first prove \eqref{eq:phi-properties}.  The Gaussian moment identities give
\begin{align*}
\int_{\mathbb R^3}v_pv_qM\,{\rm d}v=
\delta_{pq},\qquad
\int_{\mathbb R^3}v_pv_qv_rv_sM\,{\rm d}v=
\delta_{pq}\delta_{rs}
+\delta_{pr}\delta_{qs}
+\delta_{ps}\delta_{qr},
\end{align*}
 which yields  
\begin{align}\label{G2.8}
\langle\phi_m,e_1\rangle_v=
\sum_{p,q=1}^3
\omega_p(\Pi_\omega)_{mq}
\int_{\mathbb R^3}v_pv_qM\,{\rm d}v
=
\sum_{p=1}^3
\omega_p(\Pi_\omega)_{mp}
=0.
\end{align}
Moreover, the integrand in \eqref{G2.9} is an odd polynomial times the centered Maxwellian $M$; hence 
\begin{align}\label{G2.9}
\langle\phi_m,e_{j+1}\rangle_v
=
\int_{\mathbb R^3}
v_j(v\cdot\omega)(\Pi_\omega v)_mM\,{\rm d}v=0,
\end{align}
for $j=1,2,3$.
It follows from \eqref{G2.8} and \eqref{G2.9} that $\phi_m\perp\mathcal W$.

For \(1\leq p,q\leq3\), we define $H_{pq}
:=
(v_pv_q-\delta_{pq})\sqrt M$.
A direct calculation from \eqref{G1.5} shows that
\begin{align}\label{eq:L-Hpq}
\mathcal LH_{pq}=-2H_{pq}.
\end{align}
Moreover, since
\begin{align*}
\sum_{p,q=1}^3
\omega_p(\Pi_\omega)_{mq}\delta_{pq}
=
\sum_{p=1}^3
\omega_p(\Pi_\omega)_{mp}
=
(\Pi_\omega\omega)_m
=0,
\end{align*}
the profile \(\phi_m\) can be written as
\begin{align}
\phi_m
&=
\sum_{p,q=1}^3
\omega_p(\Pi_\omega)_{mq}v_pv_q\sqrt M
\nonumber\\
&=
\sum_{p,q=1}^3
\omega_p(\Pi_\omega)_{mq}
(v_pv_q-\delta_{pq})\sqrt M
\nonumber\\
&=
\sum_{p,q=1}^3
\omega_p(\Pi_\omega)_{mq}H_{pq}.
\label{eq:phi-Hpq}
\end{align}
Notice that \(\mathcal L\) is linear and acts only on the velocity variable \(v\). Thus, the coefficients \(\omega_p(\Pi_\omega)_{mq}\) may be taken outside the operator. It follows from \eqref{eq:L-Hpq} and \eqref{eq:phi-Hpq} that
\begin{align*}
\mathcal L\phi_m=
\sum_{p,q=1}^3
\omega_p(\Pi_\omega)_{mq}\mathcal LH_{pq}=
-2
\sum_{p,q=1}^3
\omega_p(\Pi_\omega)_{mq}H_{pq}=
-2\phi_m,
\end{align*}
which proves the second identity in \eqref{eq:phi-properties}.

We next prove \eqref{eq:phi-transport}. By direct calculation, one gets
\begin{align*}
\left\langle
e_{j+1},(v\cdot\omega)\phi_m
\right\rangle_v
&=
\sum_{p,q,r=1}^3
\omega_p\omega_r(\Pi_\omega)_{mq}
\left(
\delta_{jp}\delta_{rq}
+\delta_{jr}\delta_{pq}
+\delta_{jq}\delta_{pr}
\right)\\
&=
2\omega_j(\Pi_\omega\omega)_m
+
|\omega|^2(\Pi_\omega)_{mj}\\
&=
(\Pi_\omega)_{mj},
\end{align*}
where we have used $\Pi_\omega\omega=0$.
Moreover, it follows directly from \eqref{eq:phi} that
\begin{align*}
\sum_{m=1}^3\omega_m\phi_m
&=
(v\cdot\omega)
\bigl(\omega\cdot\Pi_\omega v\bigr)\sqrt M
=0,
\end{align*}
which proves \eqref{eq:phi-transport} and completes the proof of Lemma \ref{lem:phi-S}.
\end{proof}

With the help of Lemma \ref{lem:phi-S}, we can further obtain the properties of the compensating function $S(\omega)$ in \eqref{eq:S}.

\begin{lem}\label{lem:S-properties}
Let \(\gamma_1,\gamma_2>0\) be fixed. The operator \(S(\omega)\) defined by  \eqref{eq:S} has the following properties:
\begin{enumerate}
\item[\rm (i)]
The map \(\omega\mapsto S(\omega)\) is \(C^\infty\) on
\(\mathbb S^2\), with values in the space of bounded linear operators on
\(L^2(\mathbb R_v^3)\). Moreover, there exists a constant \(C>0\),
independent of \(\omega\in\mathbb S^2\), such that
\begin{align}\label{eq:S-bounded}
\|S(\omega)f\|_{L_v^2}
\leq
C\|f\|_{L_v^2},
 \qquad
f\in L^2(\mathbb R_v^3),
\end{align}
and
\begin{align}\label{eq:S-odd}
S(-\omega)=-S(\omega).
\end{align}

\item[\rm (ii)]
For any \(\omega\in\mathbb S^2\), the operator \(S(\omega)\) is
skew-adjoint in \(L^2(\mathbb R_v^3)\), while
\(\mathrm iS(\omega)\) is self-adjoint. More precisely,
\begin{align}\label{eq:S-adjoint}
S(\omega)^*=-S(\omega),
\qquad
\big(\mathrm iS(\omega)\big)^*
=
\mathrm iS(\omega),
\end{align}
where \(S(\omega)^*\) denotes the adjoint operator characterized by
\begin{align*}
\langle S(\omega)f,h\rangle_v
=
\langle f,S(\omega)^*h\rangle_v,
\qquad
f,h\in L^2(\mathbb R_v^3).
\end{align*}

\item[\rm (iii)]
For any \(z\in\mathbb C^3\) satisfying \(z\cdot\omega=0\), one has
\begin{align}\label{eq:S-fluid}
S^{(1)}(\omega)(z\cdot v\sqrt M)
=0,
\qquad
S(\omega)(z\cdot v\sqrt M)
=
-\gamma_2\sum_{m=1}^3z_m\phi_m.
\end{align}
\end{enumerate}
\end{lem}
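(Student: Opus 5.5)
The plan is to verify each property directly from the finite-rank structure of $S(\omega)$, using Lemma~\ref{lem:phi-S} for the second-order profiles.

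\emph{Part (i).} $S^{(1)}(\omega)$ is a finite sum of rank-one operators $f\mapsto r_{k\ell}(\omega)\langle f,e_\ell\rangle_v e_k$. The coefficients $r_{k\ell}$ are linear in $\omega$ and the $e_k$ do not depend on $\omega$. Similarly, $S^{(2)}(\omega)$ is a sum of rank-one operators built from $\phi_m(\cdot;\omega)=\sum_{p,q}\omega_p(\Pi_\omega)_{mq}H_{pq}$, see \eqref{eq:phi-Hpq}, whose coefficients are polynomials in $\omega$. Smoothness in operator norm therefore follows from smoothness of these coefficients. The bound \eqref{eq:S-bounded} follows from Cauchy--Schwarz: $|\langle f,e_\ell\rangle_v|\le\|f\|_{L^2_v}$, and $\|\phi_m\|_{L^2_v}\le\sum_{p,q}\|H_{pq}\|_{L^2_v}$ uniformly on $\mathbb S^2$. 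For oddness, $R(-\omega)=-R(\omega)$ since $R$ is linear in $\omega$, while $\Pi_{-\omega}=\Pi_\omega$ gives $\phi_m(\cdot;-\omega)=-\phi_m(\cdot;\omega)$. Hence both $S^{(1)}$ and $S^{(2)}$ are odd, which is \eqref{eq:S-odd}.

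\emph{Part (ii).} I compute $\langle S(\omega)f,h\rangle_v$ with the convention that the inner product is linear in the first slot; $R$ and $\phi_m$ are real. For $S^{(1)}$,
\[
\langle S^{(1)}f,h\rangle_v=\gamma_1\sum_{k,\ell}r_{k\ell}\langle f,e_\ell\rangle_v\overline{\langle h,e_k\rangle_v}.
\]
Because $r_{\ell k}=-r_{k\ell}$, this equals $-\langle f,S^{(1)}h\rangle_v$. For $S^{(2)}$,
\[
\langle S^{(2)}f,h\rangle_v=\gamma_2\sum_m\Bigl(\langle f,\phi_m\rangle_v\overline{\langle h,e_{m+1}\rangle_v}-\langle f,e_{m+1}\rangle_v\overline{\langle h,\phi_m\rangle_v}\Bigr),
\]
which is likewise $-\langle f,S^{(2)}h\rangle_v$. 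Thus $S^*=-S$, and then $(\mathrm iS)^*=-\mathrm iS^*=\mathrm iS$.

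\emph{Part (iii).} Let $g=z\cdot v\sqrt M$ with $z\cdot\omega=0$. Then $\langle g,e_1\rangle_v=0$ and $\langle g,e_{\ell+1}\rangle_v=z_\ell$. In $S^{(1)}g$ the only nonzero terms are those with $\ell\ge2$ and $k=1$, namely $\gamma_1\sum_\ell\omega_\ell z_\ell e_1=\gamma_1(\omega\cdot z)e_1=0$. In $S^{(2)}g$, the terms $\langle g,\phi_m\rangle_v$ vanish because $\phi_m\perp\mathcal W$ by Lemma~\ref{lem:phi-S}(i), leaving $-\gamma_2\sum_m z_m\phi_m$.

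There is no real obstacle here. The only step needing care is the adjoint computation: one must keep track of complex conjugation and check that $R$ and the $\phi_m$ are real, so that no sign or conjugate is lost.
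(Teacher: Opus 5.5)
Your proposal is correct and follows the paper's proof essentially step by step: polynomial dependence on $\omega$ plus Cauchy--Schwarz for (i), skew-symmetry of $R(\omega)$ and of each rank-two block $\langle f,\phi_m\rangle_v e_{m+1}-\langle f,e_{m+1}\rangle_v\phi_m$ for (ii), and the four-moment vector $(0,z)$ together with $\phi_m\perp\mathcal W$ for (iii). Your expansion of $\phi_m$ in the fixed profiles $H_{pq}$ with real polynomial coefficients makes the $C^\infty$ claim slightly more explicit than the paper, which only notes that the dependence on $\omega$ is polynomial.
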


\begin{proof}
We initially prove part {\rm (i)}. Given that the entries \(r_{k\ell}(\omega)\) and the profiles \(\phi_m(\cdot;\omega)\) exhibit a polynomial dependence on \(\omega\), one can conclude that
\begin{align*} 
\sup_{\omega\in\mathbb S^2}
\Big\{
\max_{1\leq k,\ell\leq4}|r_{k\ell}(\omega)|
+
\sum_{m=1}^3
\|\phi_m(\cdot;\omega)\|_{L_v^2}
\Big\}
<\infty.
\end{align*}
On the one hand, it should be noted that \(\{e_k\}_{k = 1}^4\) is an orthonormal basis of \(\mathcal W\). Then, based on \eqref{eq:S1} and the Cauchy-Schwarz inequality, it can be deduced that
\begin{align}\label{G2.16}
\|S^{(1)}(\omega)f\|_{L_v^2}
\leq
\gamma_1
\sum_{k,\ell=1}^4
|r_{k\ell}(\omega)|
\,|\langle f,e_\ell\rangle_v|
\,\|e_k\|_{L_v^2}\leq
C\gamma_1\|f\|_{L_v^2}.
\end{align}
On the other hand, \eqref{eq:S2} gives
\begin{align}\label{G2.17}
\|S^{(2)}(\omega)f\|_{L_v^2}
\leq
\gamma_2\sum_{m=1}^3
\Big\{
|\langle f,\phi_m\rangle_v|
\,\|e_{m+1}\|_{L_v^2}
+
|\langle f,e_{m+1}\rangle_v|
\,\|\phi_m\|_{L_v^2}
\Big\}\leq
C\gamma_2\|f\|_{L_v^2},
\end{align}
where \(C>0\) is independent of
\(\omega\in\mathbb S^2\). Combining the estimates \eqref{G2.16} and \eqref{G2.17} leads to  \eqref{eq:S-bounded}.

By the definitions of \(R(\omega)\), \(\Pi_\omega\), and \(\phi_m\), one has
\begin{align*}
R(-\omega) =-R(\omega),\qquad
\Pi_{-\omega} =\Pi_\omega,\qquad
\phi_m(v;-\omega)=-\phi_m(v;\omega).
\end{align*}
It then follows directly from \eqref{eq:S1} and \eqref{eq:S2} that
\begin{align*}
S^{(1)}(-\omega)=-S^{(1)}(\omega),
\qquad
S^{(2)}(-\omega)=-S^{(2)}(\omega).
\end{align*}
Therefore,
\begin{align*}
S(-\omega)=-S(\omega),
\end{align*}
which proves \eqref{eq:S-odd}.

We next prove part {\rm (ii)}. Since $R(\omega)^T=-R(\omega)$, a direct calculation based on \eqref{eq:S1} yields
\begin{align*}
\langle S^{(1)}(\omega)f,h\rangle_v=-\langle f,S^{(1)}(\omega)h\rangle_v.
\end{align*} 
Therefore, \(S^{(1)}(\omega)\) is skew-adjoint.
Each summand in \(S^{(2)}(\omega)\) is an operator of rank at most two of the form
\begin{align*}
\mathcal A_{e,\phi}f
:=
\langle f,\phi\rangle_v e
-
\langle f,e\rangle_v\phi.
\end{align*}
Since the inner product \(\langle\cdot,\cdot\rangle_v\) is linear in its first argument, for any \(f,h\in L^2(\mathbb R_v^3)\), we obtain
\begin{align*}
\langle\mathcal A_{e,\phi}f,h\rangle_v
=
-\langle f,\mathcal A_{e,\phi}h\rangle_v,
\end{align*}
which leads to
\begin{align*}
\mathcal A_{e,\phi}^*
=
-\mathcal A_{e,\phi}.
\end{align*}
Thus, each summand in \(S^{(2)}(\omega)\) is skew-adjoint, and \(S^{(2)}(\omega)\) itself is also skew-adjoint. Considering the skew-adjointness of \(S^{(1)}(\omega)\), we can conclude that
\begin{align*}
S(\omega)^*=-S(\omega).
\end{align*}
Consequently,
\begin{align*}
\big(\mathrm iS(\omega)\big)^*
=
-\mathrm iS(\omega)^*
=
\mathrm iS(\omega),
\end{align*}
which proves \eqref{eq:S-adjoint}.

It remains to verify part {\rm (iii)}.
For notational simplicity,  we set
\begin{align*}
f_z(v)
:=
z\cdot v\sqrt M
=
\sum_{m=1}^3 z_m e_{m+1}.
\end{align*}
where
the four-moment vector associated with \(f_z\) is
\begin{align*}
\left(
\langle f_z,e_1\rangle_v,
\langle f_z,e_2\rangle_v,
\langle f_z,e_3\rangle_v,
\langle f_z,e_4\rangle_v
\right)^T
=
\begin{pmatrix}
0\\ z
\end{pmatrix}.
\end{align*}
According to the definition of \(R(\omega)\), we have
\begin{align*}
R(\omega)
\begin{pmatrix}
0\\ z
\end{pmatrix}
=
\begin{pmatrix}
\omega\cdot z\\ 0
\end{pmatrix}.
\end{align*}
Thus, under the condition \(z\cdot\omega = 0\), we directly arrive at \(S^{(1)}(\omega)f_z = 0\) from \eqref{eq:S1}.

Moreover,  \(f_z\in\mathcal W\), whereas
\(\phi_m\perp\mathcal W\)  by 
Lemma \ref{lem:phi-S}. Therefore, we have
\begin{align*}
\langle f_z,\phi_m\rangle_v=0,
\qquad
\langle f_z,e_{m+1}\rangle_v=z_m.
\end{align*}
Substituting  the above identities into \eqref{eq:S2}, one obtains
\begin{align*}
S^{(2)}(\omega)f_z
=
-\gamma_2\sum_{m=1}^3z_m\phi_m,
\end{align*}
which together with \(S^{(1)}(\omega)f_z=0\)  yields
\begin{align*}
S(\omega)f_z
=
-\gamma_2\sum_{m=1}^3z_m\phi_m.
\end{align*}
Then,   \eqref{eq:S-fluid} follows, and we complete the proof of Lemma \ref{lem:S-properties}.
\end{proof}

We next prove the modified coercivity estimate stated in \eqref{G1.8}.

\begin{lem}\label{lem:modified-coercivity}
The constants \(\gamma_1,\gamma_2>0\) in
\eqref{eq:S1} and \eqref{eq:S2} can be chosen such that there exist
constants \(C_0,c_0>0\), independent of
\(\omega\in\mathbb S^2\), with the following property: for any
\(f\in L_v^2\) satisfying \(|f|_\nu<\infty\) and every
\(z\in\mathbb C^3\) satisfying \(z\cdot\omega=0\), one has
\begin{align}\label{eq:modified-coercivity}
\operatorname{Re}
\left\langle
S(\omega)(v\cdot\omega)f,f
\right\rangle_v
+
C_0\mathcal D(f,z)\geq
c_0
\big\{
|a(f)|^2
+
|z|^2
+
\big|
\{\mathbf I-\mathbf P\}f
\big|_\nu^2
\big\}.
\end{align}
Here \(\mathcal D(f,z)\) denotes the quadratic form introduced in
\eqref{intro-relative-dissipation}, with
\((\widehat f,\widehat u)\) replaced by \((f,z)\).
\end{lem}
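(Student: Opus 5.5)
The plan is to reduce \eqref{eq:modified-coercivity} to a finite-dimensional computation by splitting \(f\) along \(\mathcal W\) and its orthogonal complement:
\begin{align*}
f=a(f)e_1+J(f)\cdot v\sqrt M+g,\qquad g:=\{\mathbf I-\mathbf P_0\}f\perp\mathcal W .
\end{align*}
I would first take \(\gamma_1=\gamma_2=1\) and adjust constants at the end. In what follows \(C\) denotes constants uniform in \(\omega\), and \(|g|_{L_v^2}\le|g|_\nu\) is used freely.

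\textbf{The \(S^{(1)}\) part.} Since \(S^{(1)}\) sees only the four moments of its argument, I need \(\langle(v\cdot\omega)f,e_\ell\rangle_v\). The \(\mathcal W\)-component of \(f\) contributes exactly \(V(\omega)W(f)\). For the \(g\)-component:
\begin{itemize}
\item the \(e_1\)-moment is \(\langle g,(v\cdot\omega)\sqrt M\rangle_v=0\), because \(g\perp\mathcal W\);
\item the \(e_{j+1}\)-moments are \(\langle g,(v\cdot\omega)v_j\sqrt M\rangle_v\), each bounded by \(C|g|_{L_v^2}\).
\end{itemize}
Also \(\langle S^{(1)}h,f\rangle_v\) pairs only with \(W(f)\). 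Hence, by \eqref{eq:RV},
\begin{align*}
\operatorname{Re}\langle S^{(1)}(\omega)(v\cdot\omega)f,f\rangle_v\ \ge\ |a(f)|^2-|J_\parallel(f)|^2-C|g|_\nu\bigl(|a(f)|+|J(f)|\bigr).
\end{align*}

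\textbf{The \(S^{(2)}\) part.} By \eqref{eq:S2},
\begin{align*}
\operatorname{Re}\langle S^{(2)}(\omega)(v\cdot\omega)f,f\rangle_v
=\operatorname{Re}\sum_{m=1}^3\Bigl\{\langle(v\cdot\omega)f,\phi_m\rangle_v\,\overline{J_m(f)}
-\langle(v\cdot\omega)f,e_{m+1}\rangle_v\,\overline{\langle f,\phi_m\rangle_v}\Bigr\}.
\end{align*}
In the first bracket, \(\langle(v\cdot\omega)f,\phi_m\rangle_v\) splits into three pieces:
\begin{itemize}
\item the \(a\)-part vanishes, since the integrand is odd in \(v\);
\item the momentum part equals \(\sum_j J_j(\Pi_\omega)_{mj}=(J_\perp)_m\), by \eqref{eq:phi-transport};
\item the \(g\)-part is bounded by \(C|g|_\nu\).
\end{itemize}
Summing over \(m\), the first bracket gives \(|J_\perp(f)|^2\) plus an error of size \(C|g|_\nu|J|\). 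In the second bracket, \(\langle f,\phi_m\rangle_v=\langle g,\phi_m\rangle_v\), since \(\phi_m\perp\mathcal W\) by \eqref{eq:phi-properties}. So this bracket is bounded by \(C|g|_\nu(|a|+|J|+|g|_\nu)\).

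\textbf{Combining.} Adding the two parts and applying Young's inequality,
\begin{align*}
\operatorname{Re}\langle S(\omega)(v\cdot\omega)f,f\rangle_v\ \ge\ \tfrac12|a(f)|^2+\tfrac12|J_\perp(f)|^2-2|J_\parallel(f)|^2-C_1|g|_\nu^2 .
\end{align*}
The step I expect to need the most care is that the negative terms \(-|J_\parallel|^2\) and \(-C_1|g|^2_\nu\) cannot be absorbed by the compensator itself. They must be beaten by the direct dissipation, using the lower bound in \eqref{intro-relative-dissipation} (valid with \((\widehat f,\widehat u)=(f,z)\) since \(z\cdot\omega=0\)):
\begin{align*}
\mathcal D(f,z)\ \ge\ |z-J_\perp(f)|^2+|J_\parallel(f)|^2+\lambda_1|g|_\nu^2 .
\end{align*}
Choosing \(C_0\ge 2+(C_1+1)/\lambda_1\) leaves, on the left of \eqref{eq:modified-coercivity}, at least
\begin{align*}
\tfrac12|a|^2+\tfrac12|J_\perp|^2+|J_\parallel|^2+|g|^2_\nu+|z-J_\perp|^2 .
\end{align*}

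\textbf{Conclusion.} Two elementary bounds finish the proof:
\begin{itemize}
\item \(|z|^2\le 2|z-J_\perp|^2+2|J_\perp|^2\);
\item \(|\{\mathbf I-\mathbf P\}f|_\nu^2\le 2|J(f)\cdot v\sqrt M|_\nu^2+2|g|_\nu^2\le C(|J_\perp|^2+|J_\parallel|^2+|g|_\nu^2)\).
\end{itemize}
These give \eqref{eq:modified-coercivity} with some \(c_0>0\), uniform in \(\omega\) because every constant above came from Gaussian moments and \(\sup_{\omega}\|\phi_m\|\) bounds.
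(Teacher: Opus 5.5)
Your proposal is correct and follows the paper's proof essentially step for step. It uses the same splitting $f=a(f)\sqrt M+J(f)\cdot v\sqrt M+\{\mathbf I-\mathbf P_0\}f$, the $S^{(1)}$ contribution computed from \eqref{eq:RV}, the $S^{(2)}$ contribution computed from Lemma~\ref{lem:phi-S}, Young's inequality, and absorption of $-|J_\parallel(f)|^2$ and $-C|\{\mathbf I-\mathbf P_0\}f|_\nu^2$ by $C_0\mathcal D(f,z)$ through the lower bound in \eqref{intro-relative-dissipation}. The only difference is that the paper tracks the cross terms more sharply (the $S^{(1)}$ error pairs only with $a(f)$, and $\sum_m\omega_m\phi_m=0$ reduces the $S^{(2)}$ errors to $|\{\mathbf I-\mathbf P_0\}f|_\nu\,(|J_\perp(f)|+|\{\mathbf I-\mathbf P_0\}f|_\nu)$), whereas your coarser bounds in terms of $|a(f)|+|J(f)|$ work equally well because the resulting extra $|J_\parallel(f)|^2$ is absorbed by the dissipation anyway.
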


\begin{proof}
Thanks to the definition of \(\mathbf P_0\), one derives
\begin{align}\label{eq:P0-decomp}
f
=
a(f)\sqrt M
+
J(f)\cdot v\sqrt M
+
\{\mathbf I-\mathbf P_0\}f.
\end{align}
For $1 \leq j \leq 3$, we define $q_j$ by  
\begin{align*}
q_j
:=
\left\langle
(v\cdot\omega)
\{\mathbf I-\mathbf P_0\}f,
e_{j+1}
\right\rangle_v,
\qquad
q=(q_1,q_2,q_3).
\end{align*}
It is easy to check that
\begin{align*}
\left\langle
(v\cdot\omega)
\{\mathbf I-\mathbf P_0\}f,
e_1
\right\rangle_v
=
\left\langle
\{\mathbf I-\mathbf P_0\}f,
(v\cdot\omega)e_1
\right\rangle_v=0,
\end{align*}
where we used the following relations:
\begin{align*}
(v\cdot\omega)e_1\in\mathcal W,
\qquad
\{\mathbf I-\mathbf P_0\}f\perp\mathcal W.
\end{align*}
Below, we utilize the decomposition $J(f)=J_\parallel(f)+J_\perp(f)$ introduced in Subsection 1.2.

By leveraging \eqref{eq:RV}, \eqref{eq:S1}, and the decomposition
\eqref{eq:P0-decomp}, one obtains
\begin{align}
\operatorname{Re}
\langle
S^{(1)}(\omega)(v\cdot\omega)f,f
\rangle_v=
\gamma_1
\left(
|a(f)|^2-|J_\parallel(f)|^2
\right)
+
\gamma_1\operatorname{Re}
\big\{
(\omega\cdot q)\overline{a(f)}
\big\}.
\label{eq:S1-exact}
\end{align}
For each \(j\), utilizing the Cauchy-Schwarz inequality gives
\begin{align*}
|q_j|\leq
\bigl\|
\{\mathbf I-\mathbf P_0\}f
\bigr\|_{L_v^2}
\,
\bigl\|
(v\cdot\omega)e_{j+1}
\bigr\|_{L_v^2}\leq
C
\bigl|
\{\mathbf I-\mathbf P_0\}f
\bigr|_\nu,
\end{align*}
where \(C>0\) is independent of
\(\omega\in\mathbb S^2\), which gives rise to
\begin{align*}
|q|
\lesssim
\bigl|
\{\mathbf I-\mathbf P_0\}f
\bigr|_\nu.
\end{align*}
Applying Young's inequality to the last term in
\eqref{eq:S1-exact}, we obtain
\begin{align}\label{eq:S1-lower}
\operatorname{Re}
 \langle
S^{(1)}(\omega)(v\cdot\omega)f,f
 \rangle_v\geq
\frac{\gamma_1}{2}|a(f)|^2
-
\gamma_1|J_\parallel(f)|^2
-
C\gamma_1
\big|
\{\mathbf I-\mathbf P_0\}f
\big|_\nu^2.
\end{align}

Next, we consider the estimate of \(S^{(2)}(\omega)\). It follows from
\eqref{eq:P0-decomp} and Lemma \ref{lem:phi-S} that
\begin{align}
\left\langle
(v\cdot\omega)f,\phi_m
\right\rangle_v
&=
\bigl(J_\perp(f)\bigr)_m
+
\left\langle
(v\cdot\omega)\{\mathbf I-\mathbf P_0\}f,\phi_m
\right\rangle_v,
\label{eq:transport-phi}\\
\left\langle
(v\cdot\omega)f,e_{m+1}
\right\rangle_v
&=
\omega_ma(f)
+
\left\langle
(v\cdot\omega)\{\mathbf I-\mathbf P_0\}f,e_{m+1}
\right\rangle_v.
\label{eq:transport-e}
\end{align}
Moreover,  it holds that
\begin{align}\label{eq:omega-orthogonality}
\sum_{m=1}^3
\omega_m
\left\langle
\{\mathbf I-\mathbf P_0\}f,\phi_m
\right\rangle_v
=0,\qquad
\sum_{m=1}^3
\omega_m
\left\langle
(v\cdot\omega)\{\mathbf I-\mathbf P_0\}f,\phi_m
\right\rangle_v
=0,
\end{align}
where we used
$\sum\limits_{m=1}^3\omega_m\phi_m=0$.
From \eqref{eq:omega-orthogonality}, we find that
\begin{align*}
&\sum_{m=1}^3
\left\langle
(v\cdot\omega)\{\mathbf I-\mathbf P_0\}f,\phi_m
\right\rangle_v
\overline{J_m(f)}=
\sum_{m=1}^3
\left\langle
(v\cdot\omega)\{\mathbf I-\mathbf P_0\}f,\phi_m
\right\rangle_v
\overline{\bigl(J_\perp(f)\bigr)_m}.
\end{align*}
By inserting the identities \eqref{eq:transport-phi}--\eqref{eq:omega-orthogonality}
into \eqref{eq:S2}, one obtains that
\begin{align*}
&\operatorname{Re}
\langle
S^{(2)}(\omega)(v\cdot\omega)f,f
\rangle_v-\gamma_2|J_\perp(f)|^2\nonumber\\
&\qquad=
\gamma_2\operatorname{Re}
\sum_{m=1}^3
\Big\{
\left\langle
(v\cdot\omega)\{\mathbf I-\mathbf P_0\}f,\phi_m
\right\rangle_v
\overline{\bigl(J_\perp(f)\bigr)_m}
-
\left\langle
(v\cdot\omega)\{\mathbf I-\mathbf P_0\}f,e_{m+1}
\right\rangle_v
\overline{
\left\langle
\{\mathbf I-\mathbf P_0\}f,\phi_m
\right\rangle_v
}
\Big\},
\end{align*}
which together with Young's inequality implies that
\begin{align}\label{eq:S2-lower}
 \operatorname{Re}
 \langle
S^{(2)}(\omega)(v\cdot\omega)f,f
 \rangle_v\geq
\frac{\gamma_2}{2}|J_\perp(f)|^2
-
C\gamma_2
\big|
\{\mathbf I-\mathbf P_0\}f
\big|_\nu^2.
\end{align}

Adding \eqref{eq:S1-lower} and \eqref{eq:S2-lower}, and then using
\eqref{intro-relative-dissipation}, we deduce that
\begin{align}\label{eq:combined-coercivity}
\operatorname{Re}
\left\langle
S(\omega)(v\cdot\omega)f,f
\right\rangle_v
+
C_0\mathcal D(f,z)\geq&\,
\frac{\gamma_1}{2}|a(f)|^2
+
\frac{\gamma_2}{2}|J_\perp(f)|^2
+
(C_0-\gamma_1)|J_\parallel(f)|^2
+
C_0|z-J_\perp(f)|^2\nonumber\\
&+
\bigl(
C_0\lambda_1-C(\gamma_1+\gamma_2)
\bigr)
\big|
\{\mathbf I-\mathbf P_0\}f
\big|_\nu^2.
\end{align}
Choosing \(C_0>0\) in \eqref{eq:combined-coercivity} sufficiently large, we further derive that
\begin{align*}
|a(f)|^2
+
|z|^2
+
\big|
\{\mathbf I-\mathbf P\}f
\big|_\nu^2\lesssim&\,
 |a(f)|^2
+
|J_\parallel(f)|^2
+
|J_\perp(f)|^2
+
|z-J_\perp(f)|^2
+
\bigl|
\{\mathbf I-\mathbf P_0\}f
\bigr|_\nu^2\\
\lesssim&\, \operatorname{Re}
\left\langle
S(\omega)(v\cdot\omega)f,f
\right\rangle_v
+
C_0\mathcal D(f,z) .
\end{align*}
Consequently, the desired estimate \eqref{eq:modified-coercivity} follows.
\end{proof}

We now apply the preceding compensating function to the Euler-VFP system \eqref{A1}. By applying the Leray projection \(\mathbb P\) to the first equation in \eqref{A1}, we  define the nonlinear terms $g$ and $G$ as follows:
\begin{equation}\label{eq:gG}
\left\{
\begin{aligned}
g
&:=
-u\cdot\nabla_vf
+\frac12(u\cdot v)f,
\\
G
&:=
-\mathbb P(u\cdot\nabla_xu)
-\mathbb P\big (a(f)u\big ).
\end{aligned}
\right.
\end{equation}
Taking the Fourier transform with respect to \(x\) in \eqref{A1}, for \(\omega = \xi/|\xi|\), we obtain that
\begin{equation}\label{eq:Fourier-system}
\left\{
\begin{aligned}
&\partial_t\widehat f
+\mathrm i|\xi|(v\cdot\omega)\widehat f
-\mathcal L\widehat f
=
\widehat u\cdot v\sqrt M+\widehat g,
\\
&\omega\cdot\widehat u=0,\\
&\partial_t\widehat u+\widehat u
=
\Pi_\omega J(\widehat f)+\widehat G.
\end{aligned}
\right.
\end{equation}
For \(0<\kappa\ll1\), we introduce the compensated energy
\(\mathcal E_N(t)\) by
\begin{align*} 
\mathcal E_N(t)
:=\frac12
\int_{\mathbb R^3}
(1+|\xi|^2)^N
\Big[
&
 \big(
\|\widehat f\|_{L_v^2}^2+|\widehat u|^2
 \big)-
\frac{\ka|\xi|}{1+|\xi|^2}
 \langle
\mathrm iS(\omega)\widehat f,\widehat f
 \rangle_v
\Big]
\,{\rm d}\xi.
\end{align*}
We collect the nonlinear terms arising from the basic energy identity in
\begin{align}\label{eq:N1}
\mathcal N_{1,N}(t)
:=
\int_{\mathbb R^3}
(1+|\xi|^2)^N
\Big\{
\operatorname{Re}
\langle\widehat g,\widehat f\rangle_v+
\operatorname{Re}
\big(
\widehat G\cdot\overline{\widehat u}
\big)
\Big\}\,{\rm d}\xi.
\end{align}
In addition, the interaction term generates finite-dimensional moments of
\(\widehat g\), which are measured by
\begin{align}\label{eq:N2}
\mathcal N_{2,N}(t)
:=&\,
\sum_{k=1}^4
\int_{\mathbb R^3}
(1+|\xi|^2)^{N-1}
|\langle\widehat g,e_k\rangle_v|^2
\,{\rm d}\xi
\nonumber\\
&+
\sum_{1\leq i\leq j\leq3}
\int_{\mathbb R^3}
(1+|\xi|^2)^{N-1}
 \big|
 \big\langle
\widehat g,
(v_iv_j-\delta_{ij})\sqrt M
 \big\rangle_v
 \big|^2
\,{\rm d}\xi.
\end{align}

The compensated energy \(\mathcal E_N(t)\) and the nonlinear quantities
\(\mathcal N_{1,N}(t)\) and \(\mathcal N_{2,N}(t)\) introduced above satisfy the
following Fourier energy estimate, which will be used in the nonlinear analysis.

\begin{prop}\label{prop:Fourier-energy}
For sufficiently small \(\kappa>0\), one has
\begin{align}\label{eq:energy-equivalence}
\mathcal E_N(t)
\sim
\|f(t)\|_{L_v^2(H^N)}^2
+
\|u(t)\|_{H^N}^2.
\end{align}
Moreover, there exists a constant \(\lambda_2>0\), independent of \(t\),
such that
\begin{align}\label{eq:Fourier-energy-ineq}
\frac{{\rm d}}{{\rm d}t}\mathcal E_N(t)
+
\lambda_2 \mathcal D_N(t)
\leq
\mathcal N_{1,N}(t)
+
C\mathcal N_{2,N}(t).
\end{align}
\end{prop}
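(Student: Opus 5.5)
The plan is to work pointwise in \(\xi\neq0\), derive a Lyapunov inequality for the Fourier integrand, multiply by \((1+|\xi|^2)^N\) and integrate.

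\emph{Energy equivalence.} By \eqref{eq:S-bounded} and \(\frac{|\xi|}{1+|\xi|^2}\le\frac12\), the correction term is bounded by \(\frac{C\kappa}{2}\|\widehat f\|_{L_v^2}^2\). For \(\kappa\) small the integrand is therefore comparable to \(\|\widehat f\|_{L_v^2}^2+|\widehat u|^2\). Plancherel then gives \eqref{eq:energy-equivalence}.

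\emph{Direct energy identity.} Take the real part of the \(L_v^2\) inner product of \eqref{eq:Fourier-system}\(_1\) with \(\widehat f\). The transport term \(\mathrm i|\xi|(v\cdot\omega)\) is skew and drops out. Pair \eqref{eq:Fourier-system}\(_3\) with \(\overline{\widehat u}\), and use \(\Pi_\omega\widehat u=\widehat u\), so that \(\Pi_\omega J\cdot\overline{\widehat u}=J\cdot\overline{\widehat u}\). Adding the two identities gives
\begin{align*}
\frac12\frac{\rm d}{{\rm d}t}\big(\|\widehat f\|_{L_v^2}^2+|\widehat u|^2\big)+\mathcal D(\widehat f,\widehat u)=\operatorname{Re}\langle\widehat g,\widehat f\rangle_v+\operatorname{Re}(\widehat G\cdot\overline{\widehat u}).
\end{align*}
The cross terms \(\operatorname{Re}(\widehat u\cdot\overline{J})\) combine exactly into \(\mathcal D\) as in \eqref{intro-relative-dissipation}, and the right-hand side produces \(\mathcal N_{1,N}\).

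\emph{Compensating term.} Using \eqref{eq:S-adjoint}, compute
\(-\frac12\frac{\rm d}{{\rm d}t}\langle\mathrm iS\widehat f,\widehat f\rangle_v=\operatorname{Re}\langle S\,\partial_t\widehat f,\widehat f\rangle_v\) up to sign conventions. Substituting \(\partial_t\widehat f\) from \eqref{eq:Fourier-system}\(_1\), the transport part gives \(|\xi|\operatorname{Re}\langle S(v\cdot\omega)\widehat f,\widehat f\rangle_v\). Multiplied by \(\kappa\frac{|\xi|}{1+|\xi|^2}\), this is the term \(\kappa\frac{|\xi|^2}{1+|\xi|^2}\operatorname{Re}\langle S(v\cdot\omega)\widehat f,\widehat f\rangle_v\). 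The other contributions are as follows.

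\begin{itemize}
\item \emph{The \(\mathcal L\widehat f\) term.} Since \(S\) is finite rank with range in \(\operatorname{span}\{e_k,\phi_m\}\), and these are eigenfunctions of \(\mathcal L\) by Lemma \ref{lem:phi-S}, we have \(\langle S\mathcal L\widehat f,\widehat f\rangle=-\langle \mathcal L\widehat f, S\widehat f\rangle\). This involves only \(\langle\widehat f,e_k\rangle\) and \(\langle\widehat f,\phi_m\rangle\) paired with \(\mathcal L e_k=-(k>1)e_k\) and \(\mathcal L\phi_m=-2\phi_m\). The result is a quadratic form in the moments of \(\{\mathbf I-\mathbf P_0\}\widehat f\) and \(J\). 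Since \(\mathcal L\) kills \(a\)-modes, the \(J\)-parts are of the form \(J\times\)(moments of \(\{\mathbf I-\mathbf P_0\}\widehat f\) or \(J_\parallel\)). I expect these to be bounded by \(\eta|J_\perp|^2+C_\eta\mathcal D\).
\item \emph{The source term \(\widehat u\cdot v\sqrt M\).} By \eqref{eq:S-fluid}, \(S(\widehat u\cdot v\sqrt M)=-\gamma_2\sum\widehat u_m\phi_m\). Its pairing with \(\widehat f\) equals \(\widehat u\cdot\overline{\langle\{\mathbf I-\mathbf P_0\}\widehat f,\phi\rangle}\), which is bounded by \(\eta|\widehat u|^2+C_\eta|\{\mathbf I-\mathbf P_0\}\widehat f|_\nu^2\).
\item \emph{The nonlinearity \(\widehat g\).} The pairing \(\langle S\widehat g,\widehat f\rangle=-\langle\widehat g,S\widehat f\rangle\) only involves \(\langle\widehat g,e_k\rangle\) and \(\langle\widehat g,\phi_m\rangle\). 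Since \(\phi_m\) is a combination of \(H_{ij}\), Cauchy–Schwarz with weight \(\frac{|\xi|}{1+|\xi|^2}\le(1+|\xi|^2)^{-1/2}\) gives \(\eta\frac{|\xi|^2}{1+|\xi|^2}\|\widehat f\|^2\)-type dissipation plus \(C_\eta(1+|\xi|^2)^{-1}\)(moments of \(\widehat g\))\(^2\). After weighting, the latter is \(\mathcal N_{2,N}\).
\end{itemize}

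\emph{Combining.} Apply Lemma \ref{lem:modified-coercivity} with \(z=\widehat u\), which satisfies \(z\cdot\omega=0\). Writing \(\frac{|\xi|^2}{1+|\xi|^2}\le1\), for \(\kappa\le 1/(2C_0)\) we obtain
\begin{align*}
\mathcal D+\kappa\tfrac{|\xi|^2}{1+|\xi|^2}\operatorname{Re}\langle S(v\cdot\omega)\widehat f,\widehat f\rangle_v\ge\tfrac12\mathcal D+\kappa c_0\tfrac{|\xi|^2}{1+|\xi|^2}\big(|a|^2+|\widehat u|^2+|\{\mathbf I-\mathbf P\}\widehat f|_\nu^2\big).
\end{align*}
Choosing \(\eta\) small absorbs the error terms. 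This yields the pointwise Fourier inequality. Multiplying by \((1+|\xi|^2)^N\) and integrating produces a lower bound of the dissipation by the following pieces:
\begin{itemize}
\item the \(\mathcal D\)-part gives \(\|u-J\|\) and \(\|\{\mathbf I-\mathbf P_0\}f\|_\nu\) in weighted \(H^N\) norms;
\item the weight \((1+|\xi|^2)^N\frac{|\xi|^2}{1+|\xi|^2}\sim|\xi|^2(1+|\xi|^2)^{N-1}\) gives \(\|\nabla_x(\mathbf Pf,u)\|_{H^{N-1}}^2\);
\item combining both parts with the bound \(|\{\mathbf I-\mathbf P\}\widehat f|_\nu\lesssim|J|+|\{\mathbf I-\mathbf P_0\}\widehat f|_\nu\) and \(|J|\le|u-J|+|u|\) gives \(\sum_{1\le|\alpha|\le N}\|\partial^\alpha\{\mathbf I-\mathbf P\}f\|_\nu^2\), i.e.\ \(\mathcal D_N\).
\end{itemize}

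\emph{Main obstacle.} The delicate step is the \(\mathcal L\)-commutator term in the compensator. It contains \(J\)-dependent pieces, \(\langle\mathcal L\widehat f,e_{m+1}\rangle=-J_m\) paired with \(\langle\widehat f,\phi_m\rangle\), i.e.\ \(J_\perp\) against the \(\phi\)-moment of \(\{\mathbf I-\mathbf P_0\}\widehat f\), which is not directly dissipated by \(\mathcal D\). I would bound it by \(\eta|J_\perp|^2+C_\eta|\{\mathbf I-\mathbf P_0\}\widehat f|^2\). This term is weighted by \(\frac{|\xi|}{1+|\xi|^2}\), not \(\frac{|\xi|^2}{1+|\xi|^2}\), so at low frequency \(|J_\perp|^2\) is not absorbed by the compensated \(|\xi|^2\) gain. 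I would therefore write \(|J_\perp|\le|\widehat u-J_\perp|+|\widehat u|\) and use
\(\kappa|\xi|\,|\widehat u|\,|\{\mathbf I-\mathbf P_0\}\widehat f|\le\eta\kappa|\xi|^2|\widehat u|^2+C_\eta\kappa|\{\mathbf I-\mathbf P_0\}\widehat f|^2\), which closes for \(\kappa\) small.
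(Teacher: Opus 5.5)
Your proposal is correct and follows essentially the same route as the paper: the basic Fourier identity, the interaction identity obtained by applying $-\mathrm i|\xi|S(\omega)$, the splitting of $\mathcal D$ with $\kappa C_0\le\frac12$ together with Lemma~\ref{lem:modified-coercivity} at $z=\widehat u$, and integration against $(1+|\xi|^2)^N$.

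Two small corrections. First, the $\mathcal L$-term also contains a piece proportional to $\overline{a(\widehat f)}\,\omega\cdot J(\widehat f)$ coming from $S^{(1)}$. It is bounded by $\eta\kappa\frac{|\xi|^2}{1+|\xi|^2}|a(\widehat f)|^2+C_\eta\kappa|J_\parallel(\widehat f)|^2$, not by $\eta|J_\perp|^2+C_\eta\mathcal D$ as you wrote. Second, your ``main obstacle'' disappears with the asymmetric Young inequality $\frac{|\xi|}{1+|\xi|^2}|J||\{\mathbf I-\mathbf P_0\}\widehat f|_\nu\le\eta\frac{|\xi|^2}{1+|\xi|^2}|J|^2+\frac{C_\eta}{1+|\xi|^2}|\{\mathbf I-\mathbf P_0\}\widehat f|_\nu^2$ used in the paper, because $|J(\widehat f)|\lesssim|\{\mathbf I-\mathbf P\}\widehat f|_\nu$ is already part of the compensated dissipation. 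Your detour via $J_\perp=(J_\perp-\widehat u)+\widehat u$ also works.
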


\begin{proof}
Pairing \eqref{eq:Fourier-system}$_1$ and
\eqref{eq:Fourier-system}$_3$ with \(\widehat f\) and
\(\widehat u\), respectively, and taking real parts, we obtain
\begin{align}
\frac12\partial_t
\big(
\|\widehat f\|_{L_v^2}^2+|\widehat u|^2
\big)
+
\mathcal D(\widehat f,\widehat u)
=
\operatorname{Re}
\langle\widehat g,\widehat f\rangle_v
+
\operatorname{Re}
\big(
\widehat G\cdot\overline{\widehat u}
\big),
\label{eq:basic-Fourier}
\end{align}
where \(\mathcal D(\widehat f,\widehat u)\) is defined in
\eqref{intro-relative-dissipation}. Here, we have used the fact
\begin{align*}
\operatorname{Re}
\big\langle
\mathrm i|\xi|(v\cdot\omega)\widehat f,\widehat f
\big\rangle_v
=0,
\end{align*}
together with the constraint
\(\omega\cdot\widehat u=0\) in \eqref{eq:Fourier-system}, which implies
\begin{align*}
\operatorname{Re}
\big(
\Pi_\omega J(\widehat f)\cdot\overline{\widehat u}
\big)
=
\operatorname{Re}
\big(
J(\widehat f)\cdot\overline{\widehat u}
\big).
\end{align*}

For the interaction term, we apply
\(-\mathrm i|\xi|S(\omega)\) to  
\eqref{eq:Fourier-system}$_1$ and then take the \(L_v^2\) inner product with
\(\widehat f\). The self-adjointness of \(\mathrm iS(\omega)\) yields
\begin{align}\label{eq:interaction-identity}
-\frac{|\xi|}{2}
\partial_t
\big\langle
\mathrm iS(\omega)\widehat f,\widehat f
\big\rangle_v
+
|\xi|^2
\operatorname{Re}
\big\langle
S(\omega)(v\cdot\omega)\widehat f,
\widehat f
\big\rangle_v
=&\,
-|\xi|
\operatorname{Re}
\big\langle
\mathrm iS(\omega)\mathcal L\widehat f,
\widehat f
\big\rangle_v
-|\xi|
\operatorname{Re}
\big\langle
\mathrm iS(\omega)
(\widehat u\cdot v\sqrt M),
\widehat f
\big\rangle_v\nonumber\\
&-|\xi|
\operatorname{Re}
\big\langle
\mathrm iS(\omega)\widehat g,
\widehat f
\big\rangle_v.
\end{align}
 By using Lemma \ref{lem:S-properties}, \(\mathrm iS(\omega)\) is self-adjoint in \(L_v^2\). Hence, \(\big\langle\mathrm iS(\omega)\widehat f,\widehat f\big\rangle_v\) is real-valued.
 We estimate  the terms on the right-hand side of \eqref{eq:interaction-identity} one by one.
From the relations $\mathcal Le_1=0$, $\mathcal Le_{m+1}=-e_{m+1}$ and \eqref{eq:phi-properties}, we have 
\begin{align*}
S^{(1)}(\omega)\mathcal L\widehat f
=&\,
-\gamma_1
\big(
\omega\cdot J(\widehat f)
\big)e_1,\\
S^{(2)}(\omega)\mathcal L\widehat f
=&\,
\gamma_2
\sum_{m=1}^3
\Big[
-2
\big\langle
\{\mathbf I-\mathbf P_0\}\widehat f,
\phi_m
\big\rangle_v
e_{m+1}
+
J_m(\widehat f)\phi_m
\Big],
\end{align*}
which together with \eqref{eq:S-fluid}  gives
\begin{align}
\big|
\big\langle
\mathrm iS(\omega)\mathcal L\widehat f,
\widehat f
\big\rangle_v
\big|
\lesssim&\,
|a(\widehat f)|
\,|J_\parallel(\widehat f)|+
\bigl|
\{\mathbf I-\mathbf P_0\}\widehat f
\bigr|_\nu
\,|J(\widehat f)|,
\label{eq:linear-error-L}\\
\big|
\big\langle
\mathrm iS(\omega)
(\widehat u\cdot v\sqrt M),
\widehat f
\big\rangle_v
\big|
\lesssim&\,
|\widehat u|
\bigl|
\{\mathbf I-\mathbf P_0\}\widehat f
\bigr|_\nu.
\label{eq:linear-error-u}
\end{align}
For the last term on the right-hand side of \eqref{eq:interaction-identity}, we infer that
\begin{align}
\big|
\big\langle
\mathrm iS(\omega)\widehat g,
\widehat f
\big\rangle_v
\big|\lesssim&\,
\bigg\{
\sum_{k=1}^4
|\langle\widehat g,e_k\rangle_v|^2
+
\sum_{1\leq i\leq j\leq3}
\big|
\big\langle
\widehat g,
(v_iv_j-\delta_{ij})\sqrt M
\big\rangle_v
\big|^2
\bigg\}^{\frac{1}{2}}
\nonumber\\
& \times
\Big\{
|a(\widehat f)|
+
|J(\widehat f)|
+
\big\|
\{\mathbf I-\mathbf P_0\}\widehat f
\big\|_{L_v^2}
\Big\}.
\label{eq:source-error}
\end{align}

By substituting \eqref{eq:linear-error-L}--\eqref{eq:source-error} into \eqref{eq:interaction-identity}, multiplying the resulting inequality by \(\kappa(1 + |\xi|^2)^{-1}\), and then applying Young's inequality, we can obtain, for any \(\eta > 0\), that
\begin{align}
&-\frac{\kappa}{2}
\frac{|\xi|}{1+|\xi|^2}
\partial_t
\big\langle
\mathrm iS(\omega)\widehat f,\widehat f
\big\rangle_v+
\kappa
\frac{|\xi|^2}{1+|\xi|^2}
\operatorname{Re}
\big\langle
S(\omega)(v\cdot\omega)\widehat f,\widehat f
\big\rangle_v
\nonumber\\
&\quad\leq 
\eta\kappa
\frac{|\xi|^2}{1+|\xi|^2}
\Big\{
|a(\widehat f)|^2
+
|\widehat u|^2
+
\big|
\{\mathbf I-\mathbf P\}\widehat f
\big|_\nu^2
\Big\}+
C_\eta\kappa
\mathcal D(\widehat f,\widehat u)
\nonumber\\
&\quad\quad+
\frac{C_\eta\kappa}{1+|\xi|^2}
\bigg\{
\sum_{k=1}^4
|\langle\widehat g,e_k\rangle_v|^2
+
\sum_{1\leq i\leq j\leq3}
\big|
\big\langle
\widehat g,
(v_iv_j-\delta_{ij})\sqrt M
\big\rangle_v
\big|^2
\bigg\},
\label{eq:interaction-estimate}
\end{align}
where we have used
\begin{align*}
|J(\widehat f)|^2\lesssim
\big|
\{\mathbf I-\mathbf P\}\widehat f
\big|_\nu^2,\qquad
|J_\parallel(\widehat f)|^2
+
\big|
\{\mathbf I-\mathbf P_0\}\widehat f
\big|_\nu^2\lesssim
\mathcal D(\widehat f,\widehat u),
\end{align*}
and
\begin{align*}
\big\|
\{\mathbf I-\mathbf P_0\}\widehat f
\big\|_{L_v^2}
\lesssim
\big|
\{\mathbf I-\mathbf P_0\}\widehat f
\big|_\nu.
\end{align*}
To combine the direct dissipation with the compensating term, we rewrite
their sum as
\begin{align}
&\mathcal D(\widehat f,\widehat u)
+
\kappa
\frac{|\xi|^2}{1+|\xi|^2}
\operatorname{Re}
\big\langle
S(\omega)(v\cdot\omega)\widehat f,\widehat f
\big\rangle_v
\nonumber\\
&\quad=
\Big(
1-
\kappa C_0
\frac{|\xi|^2}{1+|\xi|^2}
\Big)
\mathcal D(\widehat f,\widehat u)+
\kappa
\frac{|\xi|^2}{1+|\xi|^2}
\Big\{
\operatorname{Re}
\big\langle
S(\omega)(v\cdot\omega)\widehat f,\widehat f
\big\rangle_v
+
C_0\mathcal D(\widehat f,\widehat u)
\Big\}.
\label{eq:coercive-splitting}
\end{align}
Taking \(\kappa>0\) sufficiently small such that
\begin{align*}
\kappa C_0\leq\frac12,
\end{align*}
we have
\begin{align*}
1-\kappa C_0\frac{|\xi|^2}{1+|\xi|^2}
\geq
1-\kappa C_0
\geq
\frac12.
\end{align*}
Then, it follows from Lemma \ref{lem:modified-coercivity} and
\eqref{eq:coercive-splitting} that
\begin{align}
&\mathcal D(\widehat f,\widehat u)
+
\kappa
\frac{|\xi|^2}{1+|\xi|^2}
\operatorname{Re}
\big\langle
S(\omega)(v\cdot\omega)\widehat f,\widehat f
\big\rangle_v
\nonumber\\
&\qquad\geq
\frac12\mathcal D(\widehat f,\widehat u)
+
c_0\kappa
\frac{|\xi|^2}{1+|\xi|^2}
\Big\{
|a(\widehat f)|^2
+
|\widehat u|^2
+
\big|
\{\mathbf I-\mathbf P\}\widehat f
\big|_\nu^2
\Big\}.
\label{eq:coercive-combination}
\end{align}

Choose \(\eta>0\) sufficiently small such that
\begin{align*}
\eta\leq\frac{c_0}{2},
\end{align*}
and then take \(\kappa>0\), if necessary, such that
\begin{align*}
C_\eta\kappa\leq\frac14.
\end{align*}
Adding \eqref{eq:interaction-estimate} to
\eqref{eq:basic-Fourier} and using
\eqref{eq:coercive-combination}, we obtain that
\begin{align}
&\partial_t
\Big[
\frac12
\big(
\|\widehat f\|_{L_v^2}^2+|\widehat u|^2
\big)
-
\frac{\kappa}{2}
\frac{|\xi|}{1+|\xi|^2}
\big\langle
\mathrm iS(\omega)\widehat f,\widehat f
\big\rangle_v
\Big]
 +
\frac14\mathcal D(\widehat f,\widehat u)
+
\frac{c_0\kappa}{2}
\frac{|\xi|^2}{1+|\xi|^2}
\Big\{
|a(\widehat f)|^2
+
|\widehat u|^2
+
\big|
\{\mathbf I-\mathbf P\}\widehat f
\big|_\nu^2
\Big\}
\nonumber\\
&\quad\leq
\operatorname{Re}
\langle\widehat g,\widehat f\rangle_v
+
\operatorname{Re}
\big(
\widehat G\cdot\overline{\widehat u}
\big)+
\frac{C}{1+|\xi|^2}
\bigg\{
\sum_{k=1}^4
|\langle\widehat g,e_k\rangle_v|^2
+
\sum_{1\leq i\leq j\leq3}
\big|
\big\langle
\widehat g,
(v_iv_j-\delta_{ij})\sqrt M
\big\rangle_v
\big|^2
\bigg\}.
\label{eq:pointwise-Fourier-energy}
\end{align}

Multiplying \eqref{eq:pointwise-Fourier-energy} by
\((1+|\xi|^2)^N\), integrating over \(\xi\), and using Plancherel's theorem,
we obtain from \eqref{intro-relative-dissipation} that
\begin{align}
&\int_{\mathbb R^3}
(1+|\xi|^2)^N
\mathcal D(\widehat f,\widehat u)\,{\rm d}\xi\sim
\|u-J(f)\|_{H^N}^2
+
\sum_{|\alpha|\leq N}
\left\|
\partial_x^\alpha
\{\mathbf I-\mathbf P_0\}f
\right\|_\nu^2,
\label{eq:direct-DN}
\end{align}
where we have used
\begin{align*}
\lambda_1|g|_\nu^2
\leq
-\langle\mathcal Lg,g\rangle_v
\leq
C|g|_\nu^2,
\qquad
g\perp\mathcal W.
\end{align*}
The frequency-weighted part satisfies
\begin{align}
&\int_{\mathbb R^3}
|\xi|^2(1+|\xi|^2)^{N-1}
\Big\{
|a(\widehat f)|^2
+
|\widehat u|^2
+
\big|
\{\mathbf I-\mathbf P\}\widehat f
\big|_\nu^2
\Big\}\,{\rm d}\xi
\nonumber\\
&\qquad\sim
\|\nabla_x\mathbf Pf\|_{L_v^2(H^{N-1})}^2
+
\|\nabla_xu\|_{H^{N-1}}^2+
\sum_{1\leq|\alpha|\leq N}
\left\|
\partial_x^\alpha
\{\mathbf I-\mathbf P\}f
\right\|_\nu^2.
\label{eq:weighted-DN}
\end{align}
Since $\{\mathbf{I} - \mathbf{P}_0\}$ is bounded with respect to the velocity-dissipation norm, namely,  
\begin{align*}
\sum_{1\leq|\alpha|\leq N}
\left\|
\partial_x^\alpha
\{\mathbf I-\mathbf P_0\}f
\right\|_\nu^2
\lesssim
\sum_{1\leq|\alpha|\leq N}
\left\|
\partial_x^\alpha
\{\mathbf I-\mathbf P\}f
\right\|_\nu^2.
\end{align*}
Thus, the sum of \eqref{eq:direct-DN} and \eqref{eq:weighted-DN} is
equivalent to \(\mathcal D_N(t)\) defined in \eqref{TB1}.
By the definitions of \(\mathcal N_{1,N}(t)\) and
\(\mathcal N_{2,N}(t)\) in \eqref{eq:N1} and \eqref{eq:N2}, respectively,
the right-hand side of the integrated inequality is bounded by $
\mathcal N_{1,N}(t)+C\mathcal N_{2,N}(t)$.
Consequently, there exists a constant \(\lambda_3>0\) such that
\begin{align*}
\frac{{\rm d}}{{\rm d}t}\mathcal E_N(t)
+
\lambda_3\mathcal D_N(t)
\leq
\mathcal N_{1,N}(t)
+
C\mathcal N_{2,N}(t),
\end{align*}
which proves \eqref{eq:Fourier-energy-ineq}.

It remains to verify \eqref{eq:energy-equivalence}. Owing to
\eqref{eq:S-bounded}, one has
\begin{align*}
\bigg|
\frac{|\xi|}{1+|\xi|^2}
\big\langle
\mathrm iS(\omega)\widehat f,\widehat f
\big\rangle_v
\bigg|
\lesssim
\|\widehat f\|_{L_v^2}^2.
\end{align*}
Hence, when \(\kappa>0\) is sufficiently small, the interaction term in
\(\mathcal E_N(t)\) is a small perturbation of the standard Fourier
energy. It follows that
\begin{align*}
\mathcal E_N(t)
\sim
\|f(t)\|_{L_v^2(H^N)}^2
+
\|u(t)\|_{H^N}^2,
\end{align*}
which proves \eqref{eq:energy-equivalence} and completes the proof of
Proposition \ref{prop:Fourier-energy}.
\end{proof}

\subsection{Analytic tools}
To estimate the nonlinear terms
\(\mathcal N_{1,N}(t)\) and \(\mathcal N_{2,N}(t)\) arising in
Proposition~\ref{prop:Fourier-energy}, we conclude the section by recording the Sobolev, product, and commutator estimates used below.
\begin{lem}[{\!\!\cite[Lemma 2.1]{CDM-KRM-2011}} and
{\cite[Lemmas 2.1--2.2]{Dk-MZ-1992}}]\label{L2.1}
For any $h_1,h_{2}\in H^3(\mathbb R^3)$ and any multi-index $\alpha$ with
$1\leq|\alpha|\leq3$, it holds that
\begin{align*}
\|h_1\|_{L^{\infty}(\mathbb R^3)} \lesssim&\,
 \|\nabla_{x} h_1\|_{L^2(\mathbb R^3)}^{\frac12}
 \|\nabla^2_{x}h_1\|_{L^2(\mathbb R^3)}^{\frac12},\\
\|h_1h_2\|_{H^1(\mathbb R^3)} \lesssim&\,
 \|h_1\|_{H^2(\mathbb R^3)}\|\nabla_{x} h_2\|_{H^2(\mathbb R^3)},\\
\|\partial^\alpha_{x}(h_1h_2)\|_{L^2(\mathbb R^3)} \lesssim&\,
 \|\nabla_{x} h_1\|_{H^2(\mathbb R^3)}\|\nabla_{x} h_2\|_{H^2(\mathbb R^3)},\\
\|h_1\|_{L^6(\mathbb R^3)} \lesssim&\,
 \|\nabla_{x} h_1\|_{L^2(\mathbb R^3)}\lesssim\|h_1\|_{H^1(\mathbb R^3)},\\
\|h_1\|_{L^q(\mathbb R^3)} \lesssim&\,
 \|h_1\|_{H^1(\mathbb R^3)},\qquad 2\leq q\leq6.
\end{align*}
\end{lem}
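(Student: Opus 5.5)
The statement is Lemma~\ref{L2.1}, a collection of standard inequalities on $\mathbb R^3$. It is quoted from \cite[Lemma 2.1]{CDM-KRM-2011} and \cite[Lemmas 2.1--2.2]{Dk-MZ-1992}. I would verify each line separately using Fourier analysis, Gagliardo--Nirenberg--Sobolev inequalities and the Leibniz rule.

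\emph{$L^\infty$ bound.} For the first inequality I split the frequency domain at a radius $R>0$ and write $\|h_1\|_{L^\infty}\leq\|\widehat h_1\|_{L^1}$. By Cauchy--Schwarz,
\begin{align*}
\int_{|\xi|\leq R}|\widehat h_1|\,{\rm d}\xi\lesssim R^{\frac12}\|\nabla_x h_1\|_{L^2},\qquad
\int_{|\xi|>R}|\widehat h_1|\,{\rm d}\xi\lesssim R^{-\frac12}\|\nabla_x^2 h_1\|_{L^2}.
\end{align*}
These come from $\int_{|\xi|\leq R}|\xi|^{-2}\,{\rm d}\xi\sim R$ and $\int_{|\xi|>R}|\xi|^{-4}\,{\rm d}\xi\sim R^{-1}$. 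Optimizing in $R$ gives the stated product.

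\emph{$L^6$ and $L^q$ bounds.} The fourth line is the Gagliardo--Nirenberg--Sobolev inequality $\dot H^1\subset L^6$. The fifth follows by interpolating between $L^2$ and $L^6$ for $2\le q\le6$.

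\emph{Product estimates.} For the second line I expand $h_1h_2$ and $\nabla(h_1h_2)$ by Leibniz and use H\"older with the pairings $L^3\times L^6$ and $L^\infty\times L^2$. For example:
\begin{align*}
\|h_1h_2\|_{L^2}&\leq\|h_1\|_{L^3}\|h_2\|_{L^6},\\
\|h_2\nabla h_1\|_{L^2}&\leq\|h_2\|_{L^\infty}\|\nabla h_1\|_{L^2},\\
\|h_1\nabla h_2\|_{L^2}&\leq\|h_1\|_{L^\infty}\|\nabla h_2\|_{L^2}.
\end{align*}
The first line and the $L^6$ bound then control these terms by $\|h_1\|_{H^2}\|\nabla h_2\|_{H^2}$. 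For the third line, with $1\le|\alpha|\le3$, I expand $\partial^\alpha(h_1h_2)=\sum C_{\beta}\,\partial^\beta h_1\partial^{\alpha-\beta}h_2$ and place the factor with fewer derivatives in $L^\infty$ or $L^3$, and the other in $L^2$ or $L^6$. The extreme terms $\beta=0$ and $\beta=\alpha$ carry an undifferentiated factor, which goes in $L^\infty$ via the first line, so it is bounded by $\nabla h\in H^1$. The middle terms use $L^3\times L^6$ or $L^\infty\times L^2$, each controlled by $\|\nabla h\|_{H^2}$.

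\emph{Main obstacle.} The only delicate point is making sure no undifferentiated $L^2$ norm of $h_1$ or $h_2$ is needed in the third line. This is exactly what the homogeneous form of the $L^\infty$ bound provides: an undifferentiated factor is always measured through $\nabla h$ and $\nabla^2 h$.
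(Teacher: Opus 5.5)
Your proposal is correct. The paper does not prove Lemma~\ref{L2.1}; it imports the five inequalities from \cite[Lemma 2.1]{CDM-KRM-2011} and \cite[Lemmas 2.1--2.2]{Dk-MZ-1992}. Your argument is therefore a self-contained replacement for a citation rather than a competing route.

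The frequency splitting for the first line is sound: $\int_{|\xi|\leq R}|\xi|^{-2}\,{\rm d}\xi=4\pi R$ and $\int_{|\xi|>R}|\xi|^{-4}\,{\rm d}\xi=4\pi R^{-1}$, and then you take $R=\|\nabla_x^2h_1\|_{L^2}/\|\nabla_xh_1\|_{L^2}$. The factor $(2\pi)^{-3}$ from the paper's Fourier convention is harmless. The Leibniz/H\"older bookkeeping for the second and third lines checks out term by term. One small omission: the $L^2$ part of the second line, $\|h_1\|_{L^3}\|h_2\|_{L^6}$, also needs the fifth line ($\|h_1\|_{L^3}\lesssim\|h_1\|_{H^1}$), not only the first line and the $L^6$ bound.

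What your write-up adds over the bare citation is that it makes explicit the homogeneity you single out as the main point, and the paper relies on exactly that later. In \eqref{eq:basic-dissipation-control}, $\|u\|_{L^\infty}$, $\|a(f)\|_{L^\infty}$ and $\|J(f)\|_{L^\infty}$ are bounded by $\sqrt{\mathcal D_N(t)}$, although $\mathcal D_N$ contains no zero-order $L^2$ norm of $u$, $a(f)$ or $J(f)$ individually. Likewise, the $\alpha=0$ term in \eqref{TA3} decays only because the $L^\infty$ norm is controlled by positive-order derivatives alone.
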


\begin{lem}[{{\!\!\cite{commutator1}} and \cite[Appendix]{commutator2}}]\label{L2.2}
Let $h_1$ and $h_2$ be Schwartz functions. For $k\geq 1$, one has
\begin{align*}
\|\nabla^{k}(h_1h_2) \|_{L^r(\mathbb{R}^3)} \lesssim&\, \|h_1\|_{L^{r_1}(\mathbb{R}^3) }\|\nabla^{k}h_2\|_{L^{r_2} }+ \|h_2\|_{L^{r_3}(\mathbb{R}^3) }\|\nabla^{k}h_1\|_{L^{r_4} (\mathbb{R}^3)},\\
\|\nabla^{k}(h_1h_2)-h_1\nabla^k h_2 \|_{L^r(\mathbb{R}^3)} \lesssim&\, \|\nabla h_1\|_{L^{r_1}(\mathbb{R}^3)}\|\nabla^{k-1}h_2\|_{L^{r_2}(\mathbb{R}^3)}+ \|h_2\|_{L^{r_3}(\mathbb{R}^3)}\|\nabla^{k}h_1\|_{L^{r_4}(\mathbb{R}^3)},    
\end{align*}
where $1<r,r_2,r_4<\infty$ and $r_i(1\leq i\leq 4)$ satisfy 
\begin{align*}
\frac{1}{r_1}+\frac{1}{r_2}=\frac{1}{r_3}+\frac{1}{r_4}=\frac{1}{r}.   
\end{align*}
\end{lem}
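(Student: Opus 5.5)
Lemma~\ref{L2.2} is the classical Kato--Ponce product and commutator estimate in homogeneous form. I would prove it by Littlewood--Paley decomposition and Bony's paraproduct calculus, not by direct Leibniz expansion. The reason is that the Leibniz rule produces intermediate terms \(\nabla^{j}h_1\nabla^{k-j}h_2\) with \(0<j<k\), and these cannot be placed into the endpoint pairings \((r_1,r_2)\), \((r_3,r_4)\) by Hölder's inequality alone when some exponent equals \(\infty\).

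Write \(\Delta_q\) for the dyadic blocks and \(S_q=\sum_{q'\leq q-2}\Delta_{q'}\). Decompose
\[
h_1h_2=T_{h_1}h_2+T_{h_2}h_1+R(h_1,h_2),\qquad T_ab=\sum_q S_{q-1}a\,\Delta_q b .
\]
For the paraproduct \(\nabla^k T_{h_1}h_2\), each block is spectrally localized in an annulus of size \(2^q\). Since \(S_{q-1}h_1\) has lower frequency, the derivatives can be moved onto \(\Delta_q h_2\). The Littlewood--Paley square-function theorem in \(L^r\), \(1<r<\infty\), together with the Fefferman--Stein vector-valued maximal inequality and the bound \(\sup_q|S_{q-1}h_1|\lesssim \mathcal M h_1\), then gives
\[
\|\nabla^k T_{h_1}h_2\|_{L^r}\lesssim \|h_1\|_{L^{r_1}}\|\nabla^kh_2\|_{L^{r_2}} .
\]
Symmetrically, \(T_{h_2}h_1\) gives the second term. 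The remainder \(R\) has output frequencies at most \(2^q\), and the derivatives can be placed on either factor since \(k\geq1\). Altogether this proves the product estimate. The restriction \(1<r_2,r_4<\infty\) is exactly what the square-function step requires on the factor carrying \(\nabla^k\), while \(r_1\) and \(r_3\) may equal \(\infty\) because only the maximal function acts there.

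For the commutator, I would write
\[
\nabla^k(h_1h_2)-h_1\nabla^kh_2
=\bigl[\nabla^kT_{h_1}h_2-T_{h_1}\nabla^kh_2\bigr]
+\bigl[\nabla^kT_{h_2}h_1-T_{\nabla^kh_2}h_1\bigr]
+\bigl[\nabla^kR(h_1,h_2)-R(h_1,\nabla^kh_2)\bigr].
\]
In the first bracket, \(\nabla^k\) applied to \(S_{q-1}h_1\Delta_qh_2\), minus \(S_{q-1}h_1\nabla^k\Delta_qh_2\), consists of terms in which at least one derivative falls on \(S_{q-1}h_1\). Using the Bernstein inequality \(\|\nabla S_{q-1}h_1\|\lesssim\) (maximal function of \(\nabla h_1\)) and \(2^{q}\Delta_q\sim\nabla\Delta_q\), this is bounded by \(\|\nabla h_1\|_{L^{r_1}}\|\nabla^{k-1}h_2\|_{L^{r_2}}\). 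When \(k\) derivatives are distributed with \(j\geq1\) on \(h_1\), I would write \(\nabla^jS_{q-1}h_1\cdot\nabla^{k-j}\Delta_qh_2\) and use \(\|\nabla^jS_{q-1}h_1\|_\infty\lesssim 2^{q(j-1)}\mathcal M(\nabla h_1)\). The second and third brackets are handled as in the product estimate. There, low frequencies of \(h_2\) multiply high frequencies of \(h_1\), or the frequencies are comparable, so \(\nabla^k\) can be put entirely on \(h_1\), yielding \(\|h_2\|_{L^{r_3}}\|\nabla^kh_1\|_{L^{r_4}}\). This covers \(T_{\nabla^kh_2}h_1\) and \(R(h_1,\nabla^kh_2)\) as well, after shifting the derivatives with Bernstein.

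The main obstacle is the first commutator bracket when \(r_1=\infty\). In that case the bound \(\|\nabla h_1\|_{L^\infty}\) must be obtained without losing a logarithm, which is exactly why the low-frequency factor must be controlled through \(\mathcal M\) and a vector-valued inequality rather than through crude summation over \(q\). For this step, and for the borderline exponents generally, I would follow the careful treatment in the cited references \cite{commutator1,commutator2}, since they are only invoked, with \(r,r_2,r_4\in(1,\infty)\), on Schwartz functions.
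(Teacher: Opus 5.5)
The paper gives no proof of Lemma~\ref{L2.2}; it only quotes it, so your sketch has to stand on its own.

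\textbf{What works.} Your product estimate is fine. So are the first two brackets of your commutator decomposition. Those blocks have annular Fourier support, and the pointwise bounds $|\nabla^jS_{q-1}h_1|\lesssim 2^{q(j-1)}\mathcal M(\nabla h_1)$ and $|S_{q-1}\nabla^k h_2|\lesssim 2^{qk}\mathcal M h_2$ put the factor that may only lie in $L^\infty$ under a maximal function. So $r_1=\infty$ is not an obstacle there, despite what you say.

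\textbf{The gap.} It is the term $R(h_1,\nabla^kh_2)=\sum_{|q-q'|\le1}\Delta_qh_1\,\Delta_{q'}\nabla^kh_2$ in the third bracket, which you claim is handled as in the product estimate. It is not. In the product estimate the resonant blocks are supported only in balls $|\xi|\lesssim 2^q$, and you could sum them because the output carries $\nabla^k$ with $k\ge1$. Here no derivative sits on the output. After shifting derivatives you face a zero-smoothness sum $\sum_qF_qG_q$, with $(F_q,G_q)\approx(2^{qk}\Delta_qh_1,\Delta_qh_2)$ or $(2^{q}\Delta_qh_1,2^{q(k-1)}\Delta_qh_2)$.

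With the tools you list, the only way to sum this is pointwise Cauchy--Schwarz in $q$. That needs a Littlewood--Paley square-function bound for $h_2$ in $L^{r_3}$ or for $\nabla h_1$ in $L^{r_1}$. This works if $r_3<\infty$ or $r_1<\infty$. It fails when $r_1=r_3=\infty$, because the square function of an $L^\infty$ function need not be bounded.

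That doubly endpoint case is exactly the one the paper uses, e.g.\ in the bound for $I_3$, where $h_1=u$, $h_2=\nabla_xu$ and both terms carry $\|\nabla_xu\|_{L^\infty}$. Closing it in your framework requires the Carleson-measure (BMO) bound for the resonant paraproduct, i.e.\ the Coifman--Meyer input of Kato--Ponce theory. So you have put the difficulty in the wrong bracket, and the step you treat as routine is the one that needs an additional idea.

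\textbf{A simpler complete route.} For integer $k$, which is all the lemma states and all the paper uses, the Leibniz route you dismissed gives a short complete proof. The intermediate terms are handled by Gagliardo--Nirenberg interpolation between the two pairings, not by H\"older alone. For the commutator, write $\nabla^k(h_1h_2)-h_1\nabla^kh_2=\sum_{j=1}^k\binom kj\nabla^{j-1}(\nabla h_1)\,\nabla^{k-j}h_2$. For $k\ge2$ set $\theta=\frac{j-1}{k-1}$, so that
\begin{align*}
\|\nabla^{j-1}(\nabla h_1)\|_{L^p}\lesssim\|\nabla h_1\|_{L^{r_1}}^{1-\theta}\|\nabla^k h_1\|_{L^{r_4}}^{\theta},
\qquad
\|\nabla^{k-j}h_2\|_{L^s}\lesssim\|\nabla^{k-1}h_2\|_{L^{r_2}}^{1-\theta}\|h_2\|_{L^{r_3}}^{\theta},
\end{align*}
with $\frac1p=\frac{1-\theta}{r_1}+\frac{\theta}{r_4}$ and $\frac1s=\frac{1-\theta}{r_2}+\frac{\theta}{r_3}$. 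Hence $\frac1p+\frac1s=\frac1r$, and by H\"older each term is bounded by $\bigl(\|\nabla h_1\|_{L^{r_1}}\|\nabla^{k-1}h_2\|_{L^{r_2}}\bigr)^{1-\theta}\bigl(\|h_2\|_{L^{r_3}}\|\nabla^kh_1\|_{L^{r_4}}\bigr)^{\theta}$. Young's inequality finishes the estimate.

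These interpolation inequalities remain valid for $r_1,r_3\in(1,\infty]$, so the $L^\infty$ endpoints cause no difficulty. The product estimate is identical with $\theta=j/k$, and for $k=1$ both estimates are plain H\"older. Paraproducts, and with them Coifman--Meyer, are only needed for fractional orders.
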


\section{Global classical solutions}

In this section, we prove global existence for the Cauchy problem \eqref{A1}--\eqref{A-1}. The main task is to control the nonlinear terms \(\mathcal{N}_{1,N}(t)\) and \(\mathcal{N}_{2,N}(t)\) arising in Proposition \ref{prop:Fourier-energy}, which allows us to close the compensated energy inequality \eqref{eq:Fourier-energy-ineq} and extend the local solution globally in time.

\subsection{Estimates of the nonlinear terms}

We first estimate \(\mathcal N_{1,N}(t)\). The main point is to combine the kinetic nonlinearity with the nonlinear drag term before applying the product estimates.

\begin{lem}\label{lem:N1}
Let \(N\geq4\). For the  strong solution $(u,f)$  to the Cauchy problem \eqref{A1}--\eqref{A-1}, we
have
\begin{align}\label{eq:N1-bound}
|\mathcal N_{1,N}(t)|
\leq
C\sqrt{\mathcal E_N(t)}\,\mathcal D_N(t).
\end{align}
\end{lem}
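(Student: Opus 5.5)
The plan is to use Plancherel to turn $\mathcal N_{1,N}(t)$ into a sum of physical-space pairings. Since $(1+|\xi|^2)^N=\sum_{|\alpha|\leq N}c_\alpha\xi^{2\alpha}$ with $c_\alpha>0$, we get
\[
\mathcal N_{1,N}(t)=\sum_{|\alpha|\leq N}c_\alpha\Big\{(\partial_x^\alpha g,\partial_x^\alpha f)_{x,v}+(\partial_x^\alpha G,\partial_x^\alpha u)_{L^2}\Big\}.
\]
Because $\nabla_x\cdot\partial_x^\alpha u=0$, the Leray projection $\mathbb P$ can be dropped in the fluid pairing. I will treat $|\alpha|=0$ and $1\leq|\alpha|\leq N$ separately. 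The reason is that, at zero order, $\mathcal D_N$ controls only $u-J(f)$ and $\{\mathbf I-\mathbf P_0\}f$. At positive order it controls $\nabla_xu$, $\nabla_x\mathbf Pf$ and $\partial_x^\alpha\{\mathbf I-\mathbf P\}f$ in $\|\cdot\|_\nu$, and hence all of $\partial_x^\alpha f$ in $\|\cdot\|_\nu$. Throughout I will use Lemma \ref{L2.1} with $N\geq4$ to get
\[
\|u\|_{L^\infty}+\|\nabla_x u\|_{W^{1,\infty}}\lesssim\|\nabla_xu\|_{H^{N-1}}\lesssim\sqrt{\mathcal D_N},
\qquad
\|f\|_{L^\infty_x L^2_v}\lesssim\sqrt{\mathcal E_N}.
\]

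\emph{Zero order (main obstacle).} The term $(-u\cdot\nabla_vf,f)$ vanishes after integrating by parts in $v$, and $(u\cdot\nabla_xu,u)=0$. What remains is
\[
\tfrac12\int u\cdot v|f|^2\,{\rm d}v{\rm d}x-\int a(f)|u|^2\,{\rm d}x .
\]
I insert the decomposition $f=a\sqrt M+J\cdot v\sqrt M+\{\mathbf I-\mathbf P_0\}f$ into the first integral.
\begin{itemize}
\item The $a^2$ and $J\otimes J$ contributions vanish by oddness of the Gaussian moments.
\item The $a$--$J$ cross term gives exactly $\int a\,u\cdot J\,{\rm d}x$.
\end{itemize}
Combined with the drag term, this yields the key cancellation
\[
\int a(f)\,u\cdot\big(J(f)-u\big)\,{\rm d}x,
\]
which is bounded by $\|u\|_{L^\infty}\|a\|_{L^2}\|u-J\|_{L^2}\lesssim\sqrt{\mathcal E_N}\,\mathcal D_N$. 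Every remaining term contains at least one factor $\{\mathbf I-\mathbf P_0\}f$. After moving the weight $v$ onto it, each is bounded by $\|u\|_{L^\infty}\|f\|_{L^2_{x,v}}\|\{\mathbf I-\mathbf P_0\}f\|_\nu\lesssim\sqrt{\mathcal E_N}\mathcal D_N$. This step is the delicate one: without the exact cancellation, the undamped products $a|u|^2$ and $a\,u\cdot J_\perp$ could not be closed.

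\emph{Positive order, kinetic part.} Let $1\leq|\alpha|\leq N$ and expand $\partial^\alpha g$ by Leibniz. The top term $u\cdot\nabla_v\partial^\alpha f$ pairs to zero. In the top term $\frac12(u\cdot v)\partial^\alpha f$, I move $v$ onto one copy and get the bound $\|u\|_{L^\infty}\|\partial^\alpha f\|_\nu^2\lesssim\sqrt{\mathcal D_N}\cdot\mathcal D_N$; here one factor $\sqrt{\mathcal D_N}$ should be replaced by $\sqrt{\mathcal E_N}$ via $\|u\|_{L^\infty}\lesssim\|u\|_{H^2}$, which is fine. For $0<\beta\leq\alpha$, I integrate the $\nabla_v$ by parts and move $v$ onto $\partial^\alpha f$. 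The resulting terms are bounded by $\|\partial^\beta u\,\partial^{\alpha-\beta}f\|_{L^2_{x,v}}\|\partial^\alpha f\|_\nu$, with the following split:
\begin{itemize}
\item if $|\beta|\leq2$, put $\partial^\beta u$ in $L^\infty$ (which is $\lesssim\sqrt{\mathcal E_N}$) and $\partial^{\alpha-\beta}f$ in $L^2$; when $\alpha\neq\beta$ this is dissipated;
\item if $|\beta|\geq3$, put $\partial^{\alpha-\beta}f$ in $L^\infty_xL^2_v$ (which is $\lesssim\sqrt{\mathcal E_N}$) and $\partial^\beta u$ in $L^2$ (which is $\lesssim\sqrt{\mathcal D_N}$).
\end{itemize}
In the borderline case $\beta=\alpha$ with $|\alpha|\leq2$, I put $\partial^\alpha u$ in $L^\infty$ and use $\|\nabla u\|_{H^{N-1}}\lesssim\sqrt{\mathcal D_N}$, with $f$ in $L^2$ bounded by $\sqrt{\mathcal E_N}$.

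\emph{Positive order, fluid part.} For $\partial^\alpha(u\cdot\nabla u)$, the principal term vanishes by $\nabla\cdot u=0$. The commutator is estimated by Lemma \ref{L2.2} as $\|\nabla u\|_{L^\infty}\|\nabla^{|\alpha|}u\|_{L^2}\|\nabla^{|\alpha|} u\|_{L^2}$, which is $\lesssim\sqrt{\mathcal E_N}\mathcal D_N$. For $\partial^\alpha(a u)$, Lemma \ref{L2.2} with the same $L^\infty$/$L^2$ split bounds it by $\sqrt{\mathcal E_N}\big(\|\nabla a\|_{H^{N-1}}+\|\nabla u\|_{H^{N-1}}\big)$; pairing with $\|\partial^\alpha u\|_{L^2}\lesssim\sqrt{\mathcal D_N}$ completes the estimate. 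Summing over $\alpha$ and using \eqref{eq:energy-equivalence} gives \eqref{eq:N1-bound}.
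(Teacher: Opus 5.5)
Your argument is correct, but it is organized differently from the paper's.

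\textbf{The paper's route.} It treats all orders $|\alpha|\le N$ uniformly. Inside $\partial_x^\alpha g$ it splits $\partial_x^{\alpha-\beta}f=\mathbf P\partial_x^{\alpha-\beta}f+\{\mathbf I-\mathbf P\}\partial_x^{\alpha-\beta}f$.
\begin{itemize}
\item The macroscopic piece becomes $a(\partial_x^{\alpha-\beta}f)\,(\partial_x^\beta u\cdot v)\sqrt M$. Together with the Leibniz expansion of the drag term, it is paired against $J(\partial_x^\alpha f)-\partial_x^\alpha u$ (term $I_1$).
\item The microscopic piece has vanishing $\mathcal W$-moments. It therefore only meets $\{\mathbf I-\mathbf P_0\}\partial_x^\alpha f$ (term $I_2$).
\end{itemize}
Both pairing partners are dissipated at every order. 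So the product bounds $\|a(f)u\|_{H^N},\|uJ(f)\|_{H^N}\lesssim\sqrt{\mathcal E_N\mathcal D_N}$ close the estimate with no case analysis.

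\textbf{Your route.} You use the same relative-momentum cancellation only at $|\alpha|=0$. Your symmetric computation with $\frac12\int u\cdot v|f|^2$ is equivalent to the paper's $\alpha=0$ case. For $|\alpha|\ge1$ you instead use that $\mathcal D_N$ already controls $\|\partial_x^\alpha f\|_\nu$ and $\|\partial_x^\alpha u\|_{L^2}$. You then pair directly and distribute $L^\infty$/$L^2$ according to $|\beta|$. All your cases are valid for $N\ge4$.

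\textbf{What each buys.} Your route is more elementary at positive order, and it makes clear that, for this lemma, the cancellation is indispensable only at zero order. The paper's route buys uniformity, and it is the one reused in the proof of Theorem~\ref{thm:positive-energy} (terms $J_1,J_2$). There, at $|\alpha|=1$, the dissipation $\mathcal D_N^{(1)}$ contains only $\partial_x^\alpha\{u-J(f)\}$ and $\partial_x^\alpha\{\mathbf I-\mathbf P_0\}f$. So direct pairing against $\partial_x^\alpha f$ and $\partial_x^\alpha u$ is no longer available, and the cancellation is needed at first order as well.
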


\begin{proof}
There exist positive constants \(c_{N,\alpha}\) such that
\begin{align*}
(1+|\xi|^2)^N
=
\sum_{|\alpha|\leq N}
c_{N,\alpha}|\xi^\alpha|^2.
\end{align*}
Hence, by Plancherel's theorem and the self-adjointness of the Leray
projection, we have
\begin{align*}
\mathcal N_{1,N}(t)
=
\sum_{|\alpha|\leq N}c_{N,\alpha}
\Big\{&
\operatorname{Re}
\big(
\partial_x^\alpha g,\partial_x^\alpha f
\big)_{x,v}
-
\operatorname{Re}
\big(
\partial_x^\alpha(a(f)u),\partial_x^\alpha u
\big)_x-
\operatorname{Re}
\big(
\partial_x^\alpha(u\cdot\nabla_xu),
\partial_x^\alpha u
\big)_x
\Big\}.
\end{align*}

We first combine the kinetic term with the nonlinear drag term. Recall
that $
g
=
-u\cdot\nabla_vf
+
\frac12(u\cdot v)f$ in \eqref{eq:gG}.
In view of the Leibniz rule, one gets
\begin{align*}
\partial_x^\alpha g
=
\sum_{\beta\leq\alpha}
\binom{\alpha}{\beta}
\Big\{
-\partial_x^\beta u\cdot\nabla_v
\partial_x^{\alpha-\beta}f
+
\frac12
\big(
\partial_x^\beta u\cdot v
\big)
\partial_x^{\alpha-\beta}f
\Big\}.
\end{align*}
Combining the kinetic and fluid contributions yields
\begin{align*} 
\mathcal N_{1,N}(t)=I_1+I_2+I_3,
\end{align*} 
where
\begin{align*}
I_1
:=&\,
\sum_{|\alpha|\leq N}c_{N,\alpha}
\sum_{\beta\leq\alpha}
\binom{\alpha}{\beta}
\operatorname{Re}
\int_{\mathbb R^3}
a\big(
\partial_x^{\alpha-\beta}f
\big)
\partial_x^\beta u
\cdot
\overline{
J(\partial_x^\alpha f)-\partial_x^\alpha u
}
\,{\rm d}x,\\
I_2
:=&\,
\sum_{|\alpha|\leq N}c_{N,\alpha}
\sum_{\beta\leq\alpha}
\binom{\alpha}{\beta}
\operatorname{Re}
\big(
-\partial_x^\beta u\cdot\nabla_v
\{\mathbf I-\mathbf P\}
\partial_x^{\alpha-\beta}f +
\frac12
\big(
\partial_x^\beta u\cdot v
\big)
\{\mathbf I-\mathbf P\}
\partial_x^{\alpha-\beta}f,
\{\mathbf I-\mathbf P_0\}
\partial_x^\alpha f
\big)_{x,v},\\
I_3
:=&\,
-
\sum_{|\alpha|\leq N}c_{N,\alpha}
\operatorname{Re}
\big(
\partial_x^\alpha(u\cdot\nabla_xu),
\partial_x^\alpha u
\big)_x.
\end{align*}

Next, we estimate \(I_1\), \(I_2\), and \(I_3\) in turn. By
Lemma~\ref{L2.1}, the equivalence of the compensated energy, and the
definition of \(\mathcal D_N(t)\), one has
\begin{align}
&\|a(f)\|_{H^N}
+\|J(f)\|_{H^N}
+\|u\|_{H^N}
+\|\nabla_xu\|_{L^\infty}
\lesssim
\sqrt{\mathcal E_N(t)},
\label{eq:basic-energy-control}\\
&\|a(f)\|_{L^\infty}
+\|J(f)\|_{L^\infty}
+\|u\|_{L^\infty}
+\|u-J(f)\|_{H^N}+
\bigg(
\sum_{|\alpha|\leq N}
\left\|
\partial_x^\alpha
\{\mathbf I-\mathbf P_0\}f
\right\|_\nu^2
\bigg)^{\frac{1}{2}}
\lesssim
\sqrt{\mathcal D_N(t)} .
\label{eq:basic-dissipation-control}
\end{align}
For the term \(I_1\),  by using Lemma \ref{L2.2} and the estimates
\eqref{eq:basic-energy-control}--\eqref{eq:basic-dissipation-control}, we have
\begin{align}
|I_1|
\lesssim&\,
\|a(f)u\|_{H^N}
\|u-J(f)\|_{H^N}
\nonumber\\
\lesssim&\,
\big(
\|a(f)\|_{H^N}\|u\|_{L^\infty}
+
\|a(f)\|_{L^\infty}\|u\|_{H^N}
\big)
\|u-J(f)\|_{H^N}
\nonumber\\
\lesssim&\,
\sqrt{\mathcal E_N(t)}\,\mathcal D_N(t).
\label{eq:I1-bound}
\end{align}
For the term \(I_2\), by leveraging $\{\mathbf I-\mathbf P\}f
=
J(f)\cdot v\sqrt M
+
\{\mathbf I-\mathbf P_0\}f$
and applying Lemmas \ref{L2.1}--\ref{L2.2}, we obtain that
\begin{align*}
&\|uJ(f)\|_{H^N}
+
\bigg(
\sum_{|\alpha|\leq N}
\left\|
\partial_x^\alpha
\big(
u\{\mathbf I-\mathbf P_0\}f
\big)
\right\|_\nu^2
\bigg)^{\frac{1}{2}}
\lesssim
\sqrt{\mathcal E_N (t)\, {\mathcal D_N}(t)},
\end{align*}
which gives rise to
\begin{align}
|I_2|
\lesssim&\,
\bigg[
\|uJ(f)\|_{H^N}
+
\bigg(
\sum_{|\alpha|\leq N}
\left\|
\partial_x^\alpha
\big(
u\{\mathbf I-\mathbf P_0\}f
\big)
\right\|_\nu^2
\bigg)^{\frac{1}{2}}
\bigg]\times
\bigg(
\sum_{|\alpha|\leq N}
\left\|
\partial_x^\alpha
\{\mathbf I-\mathbf P_0\}f
\right\|_\nu^2
\bigg)^{\frac{1}{2}}\nonumber\\
\lesssim&\,
\sqrt{\mathcal E_N(t)}\, \mathcal D_N(t).
\label{eq:I2-bound}
\end{align}

Finally, we deal with the remaining term $I_3$. Since the divergence-free condition $\nabla_x\cdot u = 0$ holds,
the zero-order contribution to $I_3$ vanishes. For $1\leq|\alpha|\leq N$, we have
\begin{align*}
\big(\partial_x^\alpha(u\cdot\nabla_xu),\partial_x^\alpha u\big)_x=\big([\partial_x^\alpha,u\cdot\nabla_x]u,\partial_x^\alpha u\big)_x.
\end{align*}
Therefore, from Lemma \ref{L2.2}, one gets
\begin{align}
|I_3|
 \lesssim
\|\nabla_xu\|_{L^\infty}
\|\nabla_xu\|_{H^{N-1}}^2
\lesssim
\sqrt{\mathcal E_N(t)}\, \mathcal D_N(t).
\label{eq:I3-bound}
\end{align}
Combining all the estimates \eqref{eq:I1-bound}--\eqref{eq:I3-bound}, we conclude that
\begin{align*}
|\mathcal N_{1,N}(t)|
\leq
C\sqrt{\mathcal E_N(t)}\, \mathcal D_N(t),
\end{align*}
which leads to \eqref{eq:N1-bound}.
\end{proof}

We next estimate the velocity moments entering
\(\mathcal N_{2,N}(t)\).

\begin{lem}\label{lem:N2}
Let \(N\geq4\). For the strong solution \((u,f)\) to the Cauchy problem
\eqref{A1}--\eqref{A-1}, we have
\begin{align}\label{eq:N2-bound}
\mathcal N_{2,N}(t)
\leq
C\mathcal E_N(t) \mathcal D_N(t).
\end{align}
\end{lem}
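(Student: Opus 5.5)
The plan is to compute the finite-dimensional moments of $\widehat g$ explicitly by integrating by parts in $v$. This reduces $\mathcal N_{2,N}(t)$ to $H^{N-1}$ norms of quadratic products of $u$ with the macroscopic moments $a(f)$ and $J(f)$. Those products are then bounded by the product estimate of Lemma~\ref{L2.2} and the Sobolev inequalities of Lemma~\ref{L2.1}.

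\textbf{Step 1: the moment identity.} Since $u=u(t,x)$ does not depend on $v$, for any test profile $\psi=p(v)\sqrt M$ with $p$ a polynomial, integration by parts in $v$ gives
\begin{align*}
\langle g,\psi\rangle_v
=\big\langle f,\,u\cdot\big(\nabla_v\psi+\tfrac12 v\psi\big)\big\rangle_v .
\end{align*}
Because $\nabla_v(p\sqrt M)=(\nabla_vp)\sqrt M-\tfrac12 vp\sqrt M$, the Gaussian factor cancels, and we obtain
\begin{align*}
\langle g,p\sqrt M\rangle_v=u\cdot\langle f,(\nabla_vp)\sqrt M\rangle_v .
\end{align*}
Applying this to the relevant choices of $p$ yields:
\begin{itemize}
\item for $p=1$: $\langle g,e_1\rangle_v=0$;
\item for $p=v_j$: $\langle g,e_{j+1}\rangle_v=u_j\,a(f)$;
\item for $p=v_iv_j-\delta_{ij}$: $\langle g,(v_iv_j-\delta_{ij})\sqrt M\rangle_v=u_iJ_j(f)+u_jJ_i(f)$.
\end{itemize}
The same identities hold after Fourier transformation in $x$, with products replaced by convolutions. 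By Plancherel's theorem, the weight $(1+|\xi|^2)^{N-1}$ then gives
\begin{align*}
\mathcal N_{2,N}(t)\lesssim \|a(f)\,u\|_{H^{N-1}}^2+\|u\otimes J(f)\|_{H^{N-1}}^2 .
\end{align*}

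\textbf{Step 2: product estimate.} For $h\in\{a(f),J_1(f),J_2(f),J_3(f)\}$, the first inequality of Lemma~\ref{L2.2}, together with the trivial $L^2$ bound at order zero, gives
\begin{align*}
\|hu\|_{H^{N-1}}\lesssim \|u\|_{L^\infty}\|h\|_{H^{N-1}}+\|h\|_{L^\infty}\|u\|_{H^{N-1}} .
\end{align*}
The two terms are handled using \eqref{eq:basic-energy-control}--\eqref{eq:basic-dissipation-control}.
\begin{itemize}
\item By \eqref{eq:basic-energy-control}, $\|h\|_{H^{N-1}}+\|u\|_{H^{N-1}}\lesssim\sqrt{\mathcal E_N(t)}$.
\item By \eqref{eq:basic-dissipation-control}, $\|u\|_{L^\infty}+\|h\|_{L^\infty}\lesssim\sqrt{\mathcal D_N(t)}$.
\end{itemize}
The bound in the second item comes from the first inequality of Lemma~\ref{L2.1}, since $\nabla_xu$, $\nabla_x^2u$, $\nabla_x a(f)$ and $\nabla_x J(f)$ appear in $\mathcal D_N$. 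Here $\nabla_xJ(f)$ is controlled by the $\partial_x^\alpha\{\mathbf I-\mathbf P\}f$ terms with $|\alpha|\ge1$, because $J(f)$ is a moment of $\{\mathbf I-\mathbf P\}f$. Squaring and summing then gives $\mathcal N_{2,N}(t)\le C\mathcal E_N(t)\mathcal D_N(t)$.

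\textbf{Main obstacle.} The only delicate point is that $\mathcal D_N$ contains no zero-order control of $u$, $a(f)$ or $J(f)$ separately. For this reason, every factor charged to the dissipation must be an $L^\infty$ factor, so that it is controlled by gradients only. This is exactly the arrangement used in Step 2: the $L^\infty$ factors go to $\mathcal D_N$, and the full $H^{N-1}$ norms are absorbed by $\mathcal E_N$. The integration by parts in Step 1 is essential for this. It removes the $v$-derivative and the velocity weight from $f$, so that no $\nu$-norm of $f$ at zero order is needed and only $a(f)$ and $J(f)$ survive.
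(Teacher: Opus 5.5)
Your proposal is correct and matches the paper's proof essentially step for step. You integrate by parts in $v$ to reduce the moments of $\widehat g$ to $u\,a(f)$ and $u_iJ_j(f)+u_jJ_i(f)$, apply Plancherel, and then use the product estimate with the $L^\infty$ factors charged to $\mathcal D_N(t)$ and the $H^{N-1}$ factors to $\mathcal E_N(t)$. Your justification that $\|a(f)\|_{L^\infty}+\|J(f)\|_{L^\infty}+\|u\|_{L^\infty}\lesssim\sqrt{\mathcal D_N(t)}$, via the homogeneous Sobolev inequality with $\nabla_x J(f)$ controlled through $\{\mathbf I-\mathbf P\}f$, correctly fills in what the paper only records as \eqref{eq:basic-dissipation-control}.
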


\begin{proof}
For any Schwartz function $\psi = \psi(v)$, integration by parts in $v$ yields  
\begin{align}\label{eq:moment-source}
\langle g,\psi\rangle_v
=\langle -u\cdot\nabla_vf
+
\frac12(u\cdot v)f, \psi\rangle_{v}=
u\cdot
\big\langle
f,\nabla_v\psi+\frac12v\psi
\big\rangle_v.
\end{align}
Applying \eqref{eq:moment-source} to the velocity profiles appearing in
\(\mathcal N_{2,N}(t)\), we obtain that
\begin{align*}
\langle g,\sqrt M\rangle_v
&=0,
\\
\langle g,v_k\sqrt M\rangle_v
&=
u_k a(f),
\qquad \qquad\qquad \quad 1\leq k\leq3,
\\
\big\langle
g,(v_iv_j-\delta_{ij})\sqrt M
\big\rangle_v
&=
u_iJ_j(f)+u_jJ_i(f),
\qquad \,1\leq i\leq j\leq3.
\end{align*}
Recalling that \(e_1=\sqrt M\) and
\(e_{k+1}=v_k\sqrt M\), the definition of
\(\mathcal N_{2,N}(t)\) and Plancherel's theorem yield
\begin{align}
\mathcal N_{2,N}(t)
&\lesssim
\|a(f)u\|_{H^{N-1}}^2
+
\sum_{1\leq i\leq j\leq3}
\|u_iJ_j(f)+u_jJ_i(f)\|_{H^{N-1}}^2
\nonumber\\
&\lesssim
\|a(f)u\|_{H^{N-1}}^2
+
\|uJ(f)\|_{H^{N-1}}^2.
\label{eq:N2-product}
\end{align}
By using Lemmas \ref{L2.1}--\ref{L2.2}, one gets
\begin{align}\label{G3.12}
\|a(f)u\|_{H^{N-1}}
+
\|uJ(f)\|_{H^{N-1}}
\lesssim&\,
\|u\|_{H^{N-1}}
\big(
\|a(f)\|_{L^\infty}
+
\|J(f)\|_{L^\infty}
\big)+
\|u\|_{L^\infty}
\big(
\|a(f)\|_{H^{N-1}}
+
\|J(f)\|_{H^{N-1}}
\big)\nonumber\\
\lesssim&\,
\sqrt{\mathcal E_N(t)}\,\sqrt{\mathcal D_N (t)}.
\end{align}
Substituting the estimate \eqref{G3.12} into \eqref{eq:N2-product} gives
\begin{align*}
\mathcal N_{2,N}(t)
\lesssim
\mathcal E_N(t)\mathcal D_N(t),
\end{align*}
which results in \eqref{eq:N2-bound}.
\end{proof}

With Lemmas \ref{lem:N1} and \ref{lem:N2} in hand, we are now ready to close the nonlinear energy estimate.

\begin{thm}\label{thm:closed-energy}
Let \(N\geq4\), and let \((u,f)\) be a strong solution to the Cauchy
problem \eqref{A1}--\eqref{A-1}. If
\begin{align*}
\mathcal E_N(t)\leq1,
\end{align*}
then there exists a constant \(\lambda_4>0\), such that
\begin{align}\label{eq:closed-energy}
\frac{{\rm d}}{{\rm d}t}\mathcal E_N(t)
+
\lambda_4\mathcal D_N(t)
\leq
C\sqrt{\mathcal E_N(t)}\,\mathcal D_N(t).
\end{align}
\end{thm}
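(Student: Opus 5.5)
The plan is to combine Proposition~\ref{prop:Fourier-energy} with Lemmas~\ref{lem:N1} and \ref{lem:N2} directly. By \eqref{eq:Fourier-energy-ineq}, for sufficiently small fixed $\kappa>0$,
\begin{align*}
\frac{{\rm d}}{{\rm d}t}\mathcal E_N(t)+\lambda_2\mathcal D_N(t)\leq \mathcal N_{1,N}(t)+C\mathcal N_{2,N}(t).
\end{align*}
Lemma~\ref{lem:N1} bounds the first term on the right by $C\sqrt{\mathcal E_N(t)}\,\mathcal D_N(t)$. Lemma~\ref{lem:N2} bounds the second by $C\mathcal E_N(t)\mathcal D_N(t)$.

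The only additional input is the hypothesis $\mathcal E_N(t)\leq1$. It gives $\mathcal E_N(t)=\sqrt{\mathcal E_N(t)}\sqrt{\mathcal E_N(t)}\leq\sqrt{\mathcal E_N(t)}$, so $C\mathcal N_{2,N}(t)\leq C\sqrt{\mathcal E_N(t)}\,\mathcal D_N(t)$. Adding the two bounds and setting $\lambda_4:=\lambda_2$ yields \eqref{eq:closed-energy}, with a constant $C$ independent of $t$.

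One point needs checking: that the lemmas apply to the strong solution under consideration. The bounds \eqref{eq:basic-energy-control}--\eqref{eq:basic-dissipation-control} used in Lemma~\ref{lem:N1} rely on the energy equivalence \eqref{eq:energy-equivalence}. They also rely on controlling the $L^\infty$ norms of $a(f)$, $J(f)$, $u$ by $\sqrt{\mathcal D_N}$. Via Lemma~\ref{L2.1}, that control uses only gradient norms, and those are contained in $\mathcal D_N$ because $\nabla_x\mathbf Pf$, $\nabla_x u$, and $\nabla_x J(f)$ are controlled by $\nabla_x\{\mathbf I-\mathbf P\}f$. For the time derivative of $\mathcal E_N$ to make sense, I would either assume enough regularity or argue in integrated form, integrating the pointwise Fourier inequality \eqref{eq:pointwise-Fourier-energy} in time.

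The main obstacle is therefore not in this theorem, which is a bookkeeping step. It lies in the already-established nonlinear bounds, specifically the cancellation in $I_1$ that reorganizes the drag term through $u-J(f)$. Here I would simply invoke those lemmas, and the smallness condition $\mathcal E_N\le 1$ serves only to convert the quadratic factor $\mathcal E_N$ into $\sqrt{\mathcal E_N}$. In the subsequent continuity argument one then takes $\sqrt{\mathcal E_N}\leq\lambda_4/(2C)$ so that the right-hand side is absorbed.
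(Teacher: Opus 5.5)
Your proposal is correct and is exactly the paper's argument: combine \eqref{eq:Fourier-energy-ineq} with Lemmas~\ref{lem:N1}--\ref{lem:N2}, use $\mathcal E_N\leq 1$ to replace $\mathcal E_N$ by $\sqrt{\mathcal E_N}$, and take $\lambda_4=\lambda_2$ (the paper allows any $0<\lambda_4\leq\lambda_2$). One small wording slip in your side remark: $\nabla_x\mathbf Pf$ and $\nabla_xu$ appear directly in $\mathcal D_N$, and only $\nabla_xJ(f)$ needs to be controlled through $\nabla_x\{\mathbf I-\mathbf P\}f$, using $J(f)=J(\{\mathbf I-\mathbf P\}f)$.
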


\begin{proof}
It follows from
Proposition \ref{prop:Fourier-energy} and
Lemmas \ref{lem:N1}--\ref{lem:N2}  that
\begin{align*}
\frac{{\rm d}}{{\rm d}t}\mathcal E_N(t)
+
\lambda_2\mathcal D_N(t)
\leq
C\big(
\sqrt{\mathcal E_N(t)}
+
\mathcal E_N(t)
\big) \mathcal D_N(t).
\end{align*}
Since \(\mathcal E_N(t)\leq1\), one has
\begin{align*}
\mathcal E_N(t)
\leq
\sqrt{\mathcal E_N(t)}.
\end{align*}
After enlarging \(C\) and choosing the condition \(0 < \lambda_4 \leq \lambda_2\), we obtain \eqref{eq:closed-energy}.
\end{proof}

\subsection{Proof of global existence}

We now complete the proof of global existence for the Euler-VFP Cauchy problem \eqref{A1}--\eqref{A-1}. Local existence and uniqueness of
classical solutions follow from a standard regularization and iteration
scheme for fluid-particle systems; see \cite{CDM-KRM-2011} for a closely
related construction. Moreover, if
\(F_0(x,v)=M+\sqrt{M}f_0(x,v)\geq0\), the maximum principle for the kinetic
equation, as in \cite{Gy-IUMJ-2004}, preserves the non-negativity of the particle distribution. Consequently, we obtain
\begin{align*}
F(t,x,v)=M+\sqrt{M}f(t,x,v)\geq0.    
\end{align*}

Let
\(T_{\max}>0\) denote the maximal existence time of the local solution.
Choose \(\varepsilon_0>0\) sufficiently small such that
\begin{align}\label{eq:epsilon-choice}
2\varepsilon_0\leq1,
\qquad
\sqrt{2\varepsilon_0}
\leq
\frac{\lambda_4}{2C},
\end{align}
and assume that
\begin{align*}
\mathcal E_N(0)\leq\varepsilon_0.
\end{align*}
Define
\begin{align*}
T^*
:=
\sup\big\{
T<T_{\max}:
\sup_{0\leq t\leq T}\mathcal E_N(t)
\leq2\varepsilon_0
\big\}.
\end{align*}
For \(0\leq t< T^*\), one has
\(\mathcal E_N(t)\leq2\varepsilon_0\leq1\). Therefore, from
Theorem  \ref{thm:closed-energy} and \eqref{eq:epsilon-choice}, we further obtain 
\begin{align*}
\frac{{\rm d}}{{\rm d}t}\mathcal E_N(t)
+
\big(
\lambda_4-C\sqrt{\mathcal E_N(t)}
\big) \mathcal D_N(t)
\leq0,
\end{align*}
which yields
\begin{align}\label{G3.15}
\frac{{\rm d}}{{\rm d}t}\mathcal E_N(t)
+
\frac{\lambda_4}{2} \mathcal D_N(t)
\leq0.
\end{align}
Integrating \eqref{G3.15} over \([0,t]\), we arrive at
\begin{align}\label{eq:global-energy-bound}
\mathcal E_N(t)
+
\frac{\lambda_4}{2}
\int_0^t\mathcal D_N(\tau)\,{\rm d}\tau
\leq
\mathcal E_N(0)
\leq
\varepsilon_0,
\end{align}
for all $0\leq t< T^*$.
In particular,
\begin{align*}
\sup_{0\leq t\leq T^*}\mathcal E_N(t)
\leq
\varepsilon_0
<
2\varepsilon_0,
\end{align*}
which strictly improves the bootstrap assumption.  By continuity of $\mathcal{E}_{N}(t)$, it follows that \(T^* = T_{\max}\).

It remains to exclude the possibility that \(T_{\max}<\infty\).
By \eqref{eq:global-energy-bound} and the equivalence of
\(\mathcal E_N(t)\) with the standard \(H^N\) energy, we have
\begin{align*}
\sup_{0\leq t<T_{\max}}\mathcal E_N(t)
\leq
\mathcal E_N(0)
\leq
\varepsilon_0.
\end{align*}
Hence, the classical solution \((u,f)\) remains uniformly bounded in
the solution space of the local existence theory throughout
\([0,T_{\max})\).
The continuation criterion therefore allows \((u,f)\) to be extended
beyond \(T_{\max}\), contradicting its maximality. Thus,
\(T_{\max}=\infty\).
Consequently, \((u,f)\) is a global classical solution to
\eqref{A1}--\eqref{A-1} and satisfies
\begin{align*}
\mathcal E_N(t)
+
\frac{\lambda_4}{2}
\int_0^t\mathcal D_N(\tau)\,{\rm d}\tau
\leq
\mathcal E_N(0),
\end{align*}
for all $t\geq 0$,
which proves the global existence and the uniform energy estimate
\eqref{TA2} in Theorem~\ref{Th1}. \hfill\(\Box\)

\section{Time decay estimates}

In this section, we prove the decay estimate \eqref{TA3} in Theorem \ref{Th1}.Following the positive-order instant-energy method developed in   \cite[Section 4]{YangYu2010} and the related decay arguments in \cite{UYZ-CAM-2006,YZ-CMP-2006}, we consider an energy functional involving at least one spatial derivative. This choice is essential: without any additional low-frequency assumption on the initial data, the full zero-order \(L^2\) energy is not known to be integrable in time, whereas the positive-order energy is controlled by the global dissipation. The Gr\"{o}nwall-type argument in \cite{Dk-MZ-1992} can then be used to derive its algebraic decay.

For the same sufficiently small constant \(\kappa > 0\) as in Proposition~\ref{prop:Fourier-energy}, we define positive-order energy  \(\mathcal{E}_{N}^{(1)}(t)\) by  
\begin{align}\label{eq:positive-energy}
\mathcal E_N^{(1)}(t)
:=
\int_{\mathbb R^3}
\Big\{
\frac12
|\xi|^2(1+|\xi|^2)^{N-2}
\big(
\|\widehat f\|_{L_v^2}^2
+
|\widehat u|^2
\big)
 -
\frac{\kappa}{2}
|\xi|^3(1+|\xi|^2)^{N-3}
\big\langle
\mathrm iS(\omega)\widehat f,\widehat f
\big\rangle_v
\Big\}\,{\rm d}\xi.
\end{align}
Since
\begin{align*}
\frac{
|\xi|^3(1+|\xi|^2)^{N-3}
}{
|\xi|^2(1+|\xi|^2)^{N-2}
}
=
\frac{|\xi|}{1+|\xi|^2}
\leq
\frac12,
\end{align*}
the boundedness of \(S(\omega)\) gives
\begin{align}\label{eq:positive-equivalence}
\mathcal E_N^{(1)}(t)
\sim
\sum_{1\leq|\alpha|\leq N-1}
\left\{
\|\partial_x^\alpha f(t)\|_{L_{x,v}^2}^2
+
\|\partial_x^\alpha u(t)\|_{L^2}^2
\right\}.
\end{align}
Correspondingly, we define the dissipation rate $\mathcal{D}^{(1)}_{N}(t)$ as
\begin{align*} 
\mathcal D_N^{(1)}(t)
:=&\,
\sum_{1\leq|\alpha|\leq N-1}
\left\{
\left\|
\partial_x^\alpha\{u-J(f)\}
\right\|_{L^2}^2
+
\left\|
\partial_x^\alpha
\{\mathbf I-\mathbf P_0\}f
\right\|_\nu^2
\right\}
\nonumber\\
&+
\sum_{2\leq|\alpha|\leq N-1}
\left\{
\|\partial_x^\alpha\mathbf Pf\|_{L_{x,v}^2}^2
+
\left\|
\partial_x^\alpha
\{\mathbf I-\mathbf P\}f
\right\|_\nu^2
+
\|\partial_x^\alpha u\|_{L^2}^2
\right\}.
\end{align*}

The following positive-order energy estimate is the key step in the decay analysis.

\begin{thm}\label{thm:positive-energy}
Let \(N\geq4\), and let \((u,f)\) be the global classical solution to the
Cauchy problem \eqref{A1}--\eqref{A-1}. Then there exist constants
\(\lambda_5>0\) and \(C>0\) such that
\begin{align}\label{eq:positive-differential}
\frac{{\rm d}}{{\rm d}t}\mathcal E_N^{(1)}(t)
+
\lambda_5\mathcal D_N^{(1)}(t)
\leq
C\mathcal D_N(t)\mathcal E_N^{(1)}(t)
+
C\bigl(\mathcal E_N^{(1)}(t)\bigr)^3.
\end{align}
\end{thm}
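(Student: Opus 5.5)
The plan is to reuse the pointwise Fourier inequality \eqref{eq:pointwise-Fourier-energy} from the proof of Proposition~\ref{prop:Fourier-energy}, but with the weight $|\xi|^2(1+|\xi|^2)^{N-2}$ in place of $(1+|\xi|^2)^N$. Multiplying \eqref{eq:pointwise-Fourier-energy} by $|\xi|^2(1+|\xi|^2)^{N-2}$ turns the energy bracket exactly into the integrand of \eqref{eq:positive-energy}, because
\[
|\xi|^2(1+|\xi|^2)^{N-2}\cdot\frac{|\xi|}{1+|\xi|^2}=|\xi|^3(1+|\xi|^2)^{N-3}.
\]
Integrating over $\xi$, the direct dissipation $\tfrac14\mathcal D(\widehat f,\widehat u)$ produces the first line of $\mathcal D_N^{(1)}$, namely $\partial_x^\alpha(u-J(f))$ and $\partial_x^\alpha\{\mathbf I-\mathbf P_0\}f$ with $1\le|\alpha|\le N-1$. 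The compensated term carries the weight $|\xi|^4(1+|\xi|^2)^{N-3}$, and so gives the second line: $\partial_x^\alpha\mathbf Pf$, $\partial_x^\alpha u$ and $\partial_x^\alpha\{\mathbf I-\mathbf P\}f$ with $2\le|\alpha|\le N-1$. Hence
\[
\frac{{\rm d}}{{\rm d}t}\mathcal E_N^{(1)}+\lambda_5\mathcal D_N^{(1)}\le \mathcal N_{1}^{(1)}+C\mathcal N_{2}^{(1)},
\]
where $\mathcal N_1^{(1)}$ and $\mathcal N_2^{(1)}$ are the analogues of \eqref{eq:N1}--\eqref{eq:N2} with the new weights. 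By Plancherel, $\mathcal N_1^{(1)}$ is a combination of $\operatorname{Re}(\partial_x^\alpha g,\partial_x^\alpha f)+\operatorname{Re}(\partial_x^\alpha G,\partial_x^\alpha u)$ over $1\le|\alpha|\le N-1$. The term $\mathcal N_2^{(1)}$ is bounded by $\|\nabla_x(a(f)u)\|_{H^{N-2}}^2+\|\nabla_x(uJ(f))\|_{H^{N-2}}^2$, using the moment identities from Lemma~\ref{lem:N2}.

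To estimate $\mathcal N_1^{(1)}$, I would use the same decomposition as in Lemma~\ref{lem:N1}, namely $I_1+I_2+I_3$, now with $|\alpha|\ge1$.

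\textbf{$I_1$.} It is bounded by
\[
\|\nabla_x(a(f)u)\|_{H^{N-2}}\,\|\nabla_x(u-J(f))\|_{H^{N-2}}\le \eta\mathcal D_N^{(1)}+C_\eta\|\nabla_x(a(f)u)\|_{H^{N-2}}^2 .
\]

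\textbf{$I_2$.} Similarly, it is bounded by $\eta\mathcal D_N^{(1)}$ plus the square of the positive-order norm of $u\{\mathbf I-\mathbf P\}f$.

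\textbf{$I_3$.} By the commutator estimate it is bounded by $\|\nabla_x u\|_{L^\infty}\|\nabla_x u\|_{H^{N-2}}^2\lesssim\sqrt{\mathcal D_N}\,\mathcal E_N^{(1)}$, which is then absorbed using Young's inequality.

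The products left over are of the form $\|\nabla_x(hk)\|_{H^{N-2}}^2$, where $h$ is one of $u,a(f),J(f),\{\mathbf I-\mathbf P_0\}f$ and $k=u$. By Lemma~\ref{L2.2},
\[
\|\nabla_x(hk)\|_{H^{N-2}}\lesssim \|h\|_{L^\infty}\|\nabla_xk\|_{H^{N-2}}+\|k\|_{L^\infty}\|\nabla_x h\|_{H^{N-2}}.
\]
Every factor $\|\nabla_x\cdot\|_{H^{N-2}}$ is bounded by $\sqrt{\mathcal E_N^{(1)}}$. Every $L^\infty$ factor is bounded, via Lemma~\ref{L2.1}, by $\|\nabla_x\cdot\|_{L^2}^{1/2}\|\nabla_x^2\cdot\|_{L^2}^{1/2}\lesssim\sqrt{\mathcal E_N^{(1)}}$, and also by $\sqrt{\mathcal D_N}$ using \eqref{eq:basic-dissipation-control}. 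Choosing the $\mathcal D_N$ bound gives terms of size $\mathcal D_N\mathcal E_N^{(1)}$. Where a genuinely quartic or higher term appears, for instance from $\mathcal N_2^{(1)}$ with the extra smallness, I would bound all factors by $\mathcal E_N^{(1)}$ together with $\mathcal E_N\lesssim1$; this is where the $(\mathcal E_N^{(1)})^3$ term is meant to enter. Choosing $\eta$ small then closes \eqref{eq:positive-differential}.

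\textbf{Main obstacle.} The difficulty is that at order $|\alpha|=1$ the dissipation $\mathcal D_N^{(1)}$ controls neither $\nabla_x u$ nor $\nabla_x a(f)$ separately. So every nonlinear term containing a first derivative has to be paired either with an $L^\infty$ factor controlled by $\sqrt{\mathcal D_N}$, or with the relative variables $\nabla_x(u-J(f))$ and $\nabla_x\{\mathbf I-\mathbf P_0\}f$. Likewise, no zero-order $L^2$ norms may appear. The commutator placements in $I_1$, $I_2$ and $I_3$ have to be arranged with exactly this in mind. The $\nu$-norm pairing in $I_2$, which involves the $v$-growth of $\tfrac12(u\cdot v)$ and $\nabla_v$, should be handled as in Lemma~\ref{lem:N1}.
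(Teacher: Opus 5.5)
\textbf{Overall route.} Your setup is the paper's. Weighting \eqref{eq:pointwise-Fourier-energy} by $|\xi|^2(1+|\xi|^2)^{N-2}$ is equivalent to the paper's weighting of \eqref{eq:basic-Fourier} and \eqref{eq:interaction-identity}, and you identify $\mathcal D_N^{(1)}$ correctly. Your treatment of $I_1$, $I_2$ and $\mathcal N_2^{(1)}$ reproduces \eqref{G4.8}, \eqref{eq:positive-J2} and \eqref{eq:positive-moment-final}: square the product, put the $L^\infty$ factor into $\sqrt{\mathcal D_N}$ and the derivative factor into $\sqrt{\mathcal E_N^{(1)}}$, then apply Young against $\sqrt{\mathcal D_N^{(1)}}$. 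One caution for $I_2$: move $\nabla_v$ and the weight $v$ entirely onto $\{\mathbf I-\mathbf P_0\}\partial_x^\alpha f$, so that only $L^2_{x,v}$ norms of $\partial_x^\alpha\big(u\{\mathbf I-\mathbf P\}f\big)$ appear. A $\nu$-norm of $\nabla_x\{\mathbf I-\mathbf P_0\}f$ is not bounded by $\sqrt{\mathcal E_N^{(1)}}$.

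\textbf{The gap is in $I_3$.} Your bound $\|\nabla_xu\|_{L^\infty}\|\nabla_xu\|_{H^{N-2}}^2\lesssim\sqrt{\mathcal D_N}\,\mathcal E_N^{(1)}$ cannot be absorbed by Young's inequality into $\lambda_5\mathcal D_N^{(1)}+C\mathcal D_N\mathcal E_N^{(1)}+C(\mathcal E_N^{(1)})^3$. Young needs a factor $\sqrt{\mathcal D_N^{(1)}}$, but both remaining factors are $\|\nabla_xu\|_{L^2}$, which is not in $\mathcal D_N^{(1)}$. This is exactly the obstruction you name in your last paragraph, and your recipe (an $L^\infty$ factor controlled by $\sqrt{\mathcal D_N}$) does not overcome it for this trilinear term.

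Concretely, take $\widehat u$ concentrated at $|\xi|\sim r$ with $\|u\|_{L^2}\sim U$, and $f=u\cdot v\sqrt M$. Then $\sqrt{\mathcal D_N}\,\mathcal E_N^{(1)}\sim r^3U^3$, while the admissible right-hand side is $\sim r^4U^2$ as $r\to0$.

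\textbf{How the paper closes it.} The paper splits by derivative order. For $2\le|\alpha|\le N-1$, the commutator produces $\|\nabla_xu\|_{L^\infty}\|\nabla_x^{|\alpha|}u\|_{L^2}\|\partial_x^\alpha u\|_{L^2}$, so a factor $\sqrt{\mathcal D_N^{(1)}}$ is available; see \eqref{eq:positive-higher-Euler}. For $|\alpha|=1$, incompressibility leaves only $\int(\partial_iu\cdot\nabla_x)u\cdot\partial_iu\,{\rm d}x$, and
\[
\Big|\int_{\mathbb R^3}(\partial_iu\cdot\nabla_x)u\cdot\partial_iu\,{\rm d}x\Big|
\le\|\nabla_xu\|_{L^3}^3
\lesssim\|\nabla_xu\|_{L^2}^{3/2}\|\nabla_x^2u\|_{L^2}^{3/2}
\le\eta\|\nabla_x^2u\|_{L^2}^2+C_\eta\|\nabla_xu\|_{L^2}^6 .
\]
This is \eqref{eq:positive-first-Euler}, and it is the actual source of $C(\mathcal E_N^{(1)})^3$. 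That term does not come from $\mathcal N_2^{(1)}$, which is already $\lesssim\mathcal D_N\mathcal E_N^{(1)}$.

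\textbf{Alternative fix.} You could instead use $\|\nabla_xu\|_{L^\infty}\lesssim\|\nabla_x^2u\|_{L^2}^{1/2}\|\nabla_x^3u\|_{L^2}^{1/2}\lesssim\sqrt{\mathcal D_N^{(1)}}$, which is valid since $N-1\geq3$. This gives $\eta\mathcal D_N^{(1)}+C(\mathcal E_N^{(1)})^2$, and $(\mathcal E_N^{(1)})^2\lesssim\mathcal D_N\mathcal E_N^{(1)}$ by \eqref{eq:Eplus-by-D}. Either way, the $|\alpha|=1$ Euler term must be estimated through $\nabla_x^2u$ rather than through $\sqrt{\mathcal D_N}$.
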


\begin{proof}
Multiplying  \eqref{eq:basic-Fourier} by $
|\xi|^2(1+|\xi|^2)^{N-2}$,
multiplying   \eqref{eq:interaction-identity} by
$
\kappa|\xi|^2(1+|\xi|^2)^{N-3}
$,
and then adding the resulting identities, we obtain
\begin{align}
&\frac{{\rm d}}{{\rm d}t}\mathcal E_N^{(1)}(t)
+
\lambda_5\mathcal D_N^{(1)}(t)\leq
\mathcal R_{1,N}^{(1)}(t)
+
C\mathcal R_{2,N}^{(1)}(t)
\label{eq:positive-preliminary}
\end{align}
for some constant \(\lambda_5>0\), where
\begin{align*}
\mathcal R_{1,N}^{(1)}(t)
:=&\,
\int_{\mathbb R^3}
|\xi|^2(1+|\xi|^2)^{N-2}
\big\{
\operatorname{Re}
\langle\widehat g,\widehat f\rangle_v
+
\operatorname{Re}
\big(
\widehat G\cdot\overline{\widehat u}
\big)
\big\}\,{\rm d}\xi,\\
\mathcal R_{2,N}^{(1)}(t)
:=&\,
\sum_{k=1}^4
\int_{\mathbb R^3}
|\xi|^2(1+|\xi|^2)^{N-3}
|\langle\widehat g,e_k\rangle_v|^2\,{\rm d}\xi
\nonumber\\
&+
\sum_{1\leq i\leq j\leq3}
\int_{\mathbb R^3}
|\xi|^2(1+|\xi|^2)^{N-3}
\big|
 \big\langle
\widehat g,
(v_iv_j-\delta_{ij})\sqrt M
 \big\rangle_v
\big|^2
\,{\rm d}\xi.
\end{align*}
Here, the linear terms have been treated exactly as in the proof of Proposition \ref{prop:Fourier-energy}. More precisely,
\begin{align*}
\sum_{1\leq|\alpha|\leq N-1}
\left\{
\left\|
\partial_x^\alpha\{u-J(f)\}
\right\|_{L^2}^2
+
\left\|
\partial_x^\alpha
\{\mathbf I-\mathbf P_0\}f
\right\|_\nu^2
\right\}\lesssim\int_{\mathbb R^3}
|\xi|^2(1+|\xi|^2)^{N-2}
\mathcal D(\widehat f,\widehat u)\,{\rm d}\xi,
\end{align*}
and
\begin{align*}
&\int_{\mathbb R^3}
|\xi|^4(1+|\xi|^2)^{N-3}
\Big\{
|a(\widehat f)|^2
+
|\widehat u|^2
+
\big|
\{\mathbf I-\mathbf P\}\widehat f
\big|_\nu^2
\Big\}\,{\rm d}\xi
\\
&\quad\sim
\sum_{2\leq|\alpha|\leq N-1}
\left\{
\|\partial_x^\alpha\mathbf Pf\|_{L_{x,v}^2}^2
+
\|\partial_x^\alpha u\|_{L^2}^2
+
\left\|
\partial_x^\alpha
\{\mathbf I-\mathbf P\}f
\right\|_\nu^2
\right\}.
\end{align*}
The two estimates mentioned above provide the dissipation
\(\mathcal D_N^{(1)}(t)\) in \eqref{eq:positive-preliminary}.

We first estimate \(\mathcal R_{1,N}^{(1)}(t)\). There exist positive constants \(c_{N,\alpha}^{(1)}\) such that
\begin{align*}
|\xi|^2(1+|\xi|^2)^{N-2}
=
\sum_{1\leq|\alpha|\leq N-1}
c_{N,\alpha}^{(1)}|\xi^\alpha|^2.
\end{align*}
Since $
G
=
-\mathbb P(u\cdot\nabla_xu)
-
\mathbb P(a(f)u)$ in \eqref{eq:gG}, with the help of the self-adjointness of \(\mathbb P\), the commutation of \(\mathbb P\) with spatial derivatives, and the fact that \(\mathbb Pu = u\), we derive
\begin{align}\label{eq:positive-R1-physical}
\mathcal R_{1,N}^{(1)}
=
\sum_{1\leq|\alpha|\leq N-1}
c_{N,\alpha}^{(1)}
\Big\{&
\operatorname{Re}
\big(
\partial_x^\alpha g,\partial_x^\alpha f
\big)_{x,v}-
\operatorname{Re}
\big(
\partial_x^\alpha(a(f)u),\partial_x^\alpha u
\big)_x-
\operatorname{Re}
\big(
\partial_x^\alpha(u\cdot\nabla_xu),
\partial_x^\alpha u
\big)_x
\Big\}.
\end{align}
Repeating the decomposition used in Lemma \ref{lem:N1} gives
\begin{align*}
&\operatorname{Re}
\big(
\partial_x^\alpha g,\partial_x^\alpha f
\big)_{x,v}
-
\operatorname{Re}
\big(
\partial_x^\alpha(a(f)u),\partial_x^\alpha u
\big)_x
\nonumber\\
=&\,
\sum_{\beta\leq\alpha}
\binom{\alpha}{\beta}
\operatorname{Re}
\int_{\mathbb R^3}
a(\partial_x^{\alpha-\beta}f)
\partial_x^\beta u
\cdot
\overline{
J(\partial_x^\alpha f)-\partial_x^\alpha u
}
\,{\rm d}x
\nonumber\\
&+
\sum_{\beta\leq\alpha}
\binom{\alpha}{\beta}
\operatorname{Re}
\Big(
-\partial_x^\beta u\cdot\nabla_v
\{\mathbf I-\mathbf P\}
\partial_x^{\alpha-\beta}f
+
\frac12
\big(
\partial_x^\beta u\cdot v
\big)
\{\mathbf I-\mathbf P\}
\partial_x^{\alpha-\beta}f,
\{\mathbf I-\mathbf P_0\}
\partial_x^\alpha f
\Big)_{x,v}\nonumber\\
=:&\,J_1+J_2.
\end{align*}
For the first term $J_1$, by Lemmas \ref{L2.1}--\ref{L2.2}, one has
\begin{align}\label{G4.8}
\sum_{1\leq|\alpha|\leq N-1}
 c_{N,\alpha}^{(1)}|J_1|
\leq&\,C
\|\nabla_x(a(f)u)\|_{H^{N-2}}
\|\nabla_x(u-J(f))\|_{H^{N-2}}\nonumber\\
\leq&\, C\sqrt{\mathcal D_N (t)\,
 \mathcal E_N^{(1)} (t)\,  \mathcal D_N^{(1)}(t)}\nonumber\\
 \leq&\, \frac{\lambda_5}{16}\mathcal D_N^{(1)}(t)
+
C\mathcal D_N(t)\,\mathcal E_N^{(1)}(t).
\end{align}
For the second term \(J_2\), the same product estimates yield
\begin{align*} 
\sum_{1\leq|\alpha|\leq N-1}
c_{N,\alpha}^{(1)}|J_2|
\lesssim&\,
\bigg[
\|\nabla_x(uJ(f))\|_{H^{N-2}}+
\|u\|_{L^\infty}
\bigg(
\sum_{1\leq|\alpha|\leq N-1}
\left\|
\partial_x^\alpha
\{\mathbf I-\mathbf P_0\}f
\right\|_\nu^2
\bigg)^{\frac{1}{2}}
\nonumber\\
&\quad+
\|\nabla_xu\|_{H^{N-2}}
\left\|
|\{\mathbf I-\mathbf P_0\}f|_{\nu}
\right\|_{L^\infty}
\bigg]\times
\bigg(
\sum_{1\leq|\alpha|\leq N-1}
\left\|
\partial_x^\alpha
\{\mathbf I-\mathbf P_0\}f
\right\|_\nu^2
\bigg)^{\frac{1}{2}}\nonumber\\
\lesssim&\,
\sqrt{\mathcal E_N(t)}\,
\mathcal D_N^{(1)}(t)+\sqrt{\mathcal D_N(t)\,
 \mathcal E_N^{(1)}(t)\,  
 \mathcal D_N^{(1)}(t)}.
\end{align*}
Since \(\mathcal E_N(t)\) is uniformly small, we   make use of Young's inequality to obtain
\begin{align}
\sum_{1\leq|\alpha|\leq N-1}
c_{N,\alpha}^{(1)}|J_2|
\leq
\frac{\lambda_{5}}{16}\mathcal D_N^{(1)}(t)
+
C \mathcal D_N(t)\,\mathcal E_N^{(1)}(t).
\label{eq:positive-J2}
\end{align}

We next consider the Euler transport part in \eqref{eq:positive-R1-physical}. Since
\(\nabla_x\cdot u=0\), it follows that
\begin{align*}
\big(
u\cdot\nabla_x\partial_x^\alpha u,
\partial_x^\alpha u
\big)_x
=0.
\end{align*}
For \(2\leq|\alpha|\leq N-1\), from Lemma \ref{L2.2}, we have
\begin{align}\label{eq:positive-higher-Euler}
\sum_{2\leq|\alpha|\leq N-1}
c_{N,\alpha}^{(1)}
\left|
\big(
\partial_x^\alpha(u\cdot\nabla_xu),
\partial_x^\alpha u
\big)_x
\right|\lesssim&\,
\|\nabla_xu\|_{L^\infty}
\|\nabla_xu\|_{H^{N-2}}
\|\nabla_x^2u\|_{H^{N-3}}\nonumber\\
\lesssim&\, \frac{\lambda_{5}}{16}\mathcal D_N^{(1)}(t)+C\mathcal{D}_{N}(t)\mathcal{E}_{N}^{(1)}(t).
\end{align}
For \(|\alpha|=1\), integration by parts gives
\begin{align}
\left|
\big(
\nabla_x(u\cdot\nabla_xu),
\nabla_xu
\big)_x
\right|
&\leq
\frac{\lambda_{5}}{16} \|\nabla_x^2u\|_{L^2}^2
+
C \|\nabla_xu\|_{L^2}^6
\nonumber\\
&\leq
\frac{\lambda_{5}}{16} \mathcal D_N^{(1)}(t)
+
C 
\big(\mathcal E_N^{(1)}(t)\big)^3.
\label{eq:positive-first-Euler}
\end{align}
Combining the estimates \eqref{eq:positive-higher-Euler} and 
\eqref{eq:positive-first-Euler} yields
\begin{align}\label{G4.13}
\sum_{1\leq|\alpha|\leq N-1}
c_{N,\alpha}^{(1)}
\left|
\big(
\partial_x^\alpha(u\cdot\nabla_xu),
\partial_x^\alpha u
\big)_x
\right|
\lesssim&\, \frac{\lambda_{5}}{8}\mathcal D_N^{(1)}(t)+C\mathcal{D}_{N}(t)\mathcal{E}_{N}^{(1)}(t)+C 
\big(\mathcal E_N^{(1)}(t)\big)^3.
\end{align}

It remains to estimate \(\mathcal R_{2,N}^{(1)}(t)\). By using a similar argument in Lemma \ref{lem:N2} and Plancherel's theorem, we can infer that
\begin{align}
\mathcal R_{2,N}^{(1)}
\lesssim
\|\nabla_x(a(f)u)\|_{H^{N-3}}^2
+
\|\nabla_x(uJ(f))\|_{H^{N-3}}^2\lesssim
\mathcal D_N(t)\mathcal E_N^{(1)}(t).
\label{eq:positive-moment-final}
\end{align}
Combining \eqref{G4.8},
\eqref{eq:positive-J2}, \eqref{G4.13}, and
\eqref{eq:positive-moment-final}, we end up with
\begin{align*}
\left|
\mathcal R_{1,N}^{(1)}
\right|
+
C\mathcal R_{2,N}^{(1)}
\leq 
\frac{\lambda_{5}}{4}\mathcal D_N^{(1)}(t)
+
C \mathcal D_N(t)\mathcal E_N^{(1)}(t)
+
C 
\big (\mathcal E_N^{(1)}(t)\big )^3,
\end{align*}
which together with \eqref{eq:positive-preliminary} leads to
\eqref{eq:positive-differential}. Therefore, the proof of Theorem \ref{thm:positive-energy} is completed.
\end{proof}

In order to obtain the decay estimates \eqref{TA3}, we shall utilize the following Gr\"{o}nwall-type lemma:

\begin{lem}[{\!\!\cite{Dk-MZ-1992}; see also
\cite[Lemma~4.2]{YangYu2010}}]\label{lem:decay-Gronwall}
Let \(t_0>0\) and let \(y\in C^1([t_0,\infty))\) be a nonnegative
function satisfying
\begin{align*}
y'(t)\leq a(t)y(t),
\qquad t\geq t_0,
\end{align*}
where \(a(t)\geq0\). Assume that
\begin{align*}
A:=\int_{t_0}^{\infty}y(t)\,{\rm d}t<\infty,
\qquad
B:=\int_{t_0}^{\infty}a(t)\,{\rm d}t<\infty.
\end{align*}
Then
\begin{align}\label{eq:decay-Gronwall}
y(t)
\leq
\frac{
\bigl(t_0y(t_0)+1\bigr)\exp(A+B)-1
}{t},
\qquad
t\geq t_0.
\end{align}
\end{lem}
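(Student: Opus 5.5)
The plan is the standard proof of this lemma from \cite{Dk-MZ-1992}. It has two parts. First we show that $y$ grows by at most a bounded factor on $[t_0,\infty)$. Then we combine this with the integrability of $y$ through the auxiliary function $t\,y(t)$.

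\textbf{Step 1 (bounded growth).} Since $a\ge0$ and $y'\le a y$, the function $y(t)\exp\bigl(-\int_{t_0}^t a\bigr)$ is nonincreasing. Hence for $t_0\le s\le t$ we get
\[
y(t)\le y(s)\exp\Bigl(\int_s^t a(\tau)\,{\rm d}\tau\Bigr)\le y(s)e^{B}.
\]

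\textbf{Step 2 (weighted function).} Set $z(t):=t\,y(t)$. Then
\[
z'=y+t y'\le y+a z .
\]
The integrating factor $\exp\bigl(-\int_{t_0}^t a\bigr)$ gives
\begin{align*}
z(t)\le \exp\Bigl(\int_{t_0}^t a\Bigr)\Bigl[z(t_0)+\int_{t_0}^t y(s)\exp\Bigl(-\int_{t_0}^s a\Bigr)\,{\rm d}s\Bigr]\le e^{B}\bigl(t_0y(t_0)+A\bigr).
\end{align*}
Dividing by $t$ already gives a bound $C/t$ with $C=e^{B}(t_0y(t_0)+A)$.

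\textbf{Step 3 (matching the stated constant).} It remains to check that the constant from Step 2 is at most the one in \eqref{eq:decay-Gronwall}. Using $e^{A}\ge1+A$, we have
\begin{align*}
(t_0y(t_0)+1)e^{A+B}-1\ge e^{B}\bigl(t_0y(t_0)+1+A\bigr)-1\ge e^{B}\bigl(t_0y(t_0)+A\bigr),
\end{align*}
where the last inequality holds because $e^{B}\ge1$. Therefore
\[
y(t)\le \frac{e^{B}(t_0y(t_0)+A)}{t}\le \frac{(t_0y(t_0)+1)e^{A+B}-1}{t}.
\]
This is \eqref{eq:decay-Gronwall}. As an alternative, one can run the integrating-factor argument on $1+z$ with the coefficient $a+y/(1+z)$, which produces the stated constant directly; the comparison above is simpler.

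The only point needing care is Step 2. The forcing term $y$ in the inequality for $z$ must be handled by its integral $A$; it must not be bounded pointwise. Once that is done, everything else is elementary. The proof uses only $y\ge0$, $a\ge0$ and $y\in C^1$.
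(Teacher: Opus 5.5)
Your proof is correct and is essentially the standard argument. The paper does not prove the lemma itself but quotes it from \cite{Dk-MZ-1992} and \cite[Lemma~4.2]{YangYu2010}; the constant in \eqref{eq:decay-Gronwall} is exactly what the same weighted function $z=ty$ produces when the source is absorbed as $(1+z)'\le (a+y)(1+z)$ and Gr\"{o}nwall is applied. Your variation-of-constants treatment of $y$ as a forcing term gives the slightly sharper constant $e^{B}(t_0y(t_0)+A)$, and your comparison in Step~3 is valid. Step~1 is never actually used, and it is $t_0>0$ that justifies $t\,y'\le t\,a\,y$.
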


We now prove the pointwise decay estimate \eqref{TA3}, thereby completing the proof of Theorem \ref{Th1}.

\begin{proof}[Proof of \eqref{TA3} in Theorem \ref{Th1}]
By employing \eqref{eq:positive-equivalence} and the definition of
\(\mathcal D_N(t)\) in \eqref{TB1},  we have
\begin{align}
\mathcal E_N^{(1)}(t)
\lesssim
\|\nabla_x\mathbf Pf\|_{L_v^2(H^{N-2})}^2
+
\|\nabla_xu\|_{H^{N-2}}^2+
\sum_{1\leq|\alpha|\leq N-1}
\left\|
\partial_x^\alpha
\{\mathbf I-\mathbf P\}f
\right\|_\nu^2
\lesssim
\mathcal D_N(t).
\label{eq:Eplus-by-D}
\end{align}
It follows from the global energy estimate
\eqref{eq:global-energy-bound} that
\begin{align}
\int_0^\infty
\mathcal E_N^{(1)}(\tau)\,{\rm d}\tau
\lesssim
\int_0^\infty
\mathcal D_N(\tau)\,{\rm d}\tau
\lesssim
\mathcal E_N(0).
\label{eq:positive-integrable}
\end{align}
Moreover, based on \eqref{eq:positive-equivalence} and \eqref{eq:energy-equivalence}, we also have
\begin{align}
\sup_{t\geq0}\mathcal E_N^{(1)}(t)
\lesssim
\sup_{t\geq0}\mathcal E_N(t)
\lesssim
\mathcal E_N(0).
\label{eq:positive-uniform}
\end{align}
Let's define 
\begin{align*}
y(t):=\mathcal E_N^{(1)}(t).
\end{align*}
Dropping the nonnegative dissipation term from
\eqref{eq:positive-differential}, we obtain that
\begin{align*}
y'(t)
\leq
a_*(t)y(t),
\quad\text{where}\quad
a_*(t)
:=
C\big(
\mathcal D_N(t)+y(t)^2
\big).
\end{align*}
which, together with \eqref{eq:positive-integrable} and \eqref{eq:positive-uniform}, gives  
\begin{align*}
\int_0^\infty a_*(t)\,{\rm d}t
&\lesssim
\int_0^\infty\mathcal D_N(t)\,{\rm d}t
+
\int_0^\infty y(t)^2\,{\rm d}t
\nonumber\\
&\lesssim
\mathcal E_N(0)
+
\left(
\sup_{t\geq0}y(t)
\right)
\int_0^\infty y(t)\,{\rm d}t
\nonumber\\
&\lesssim
\mathcal E_N(0).
\end{align*}

Next,
we apply Lemma \ref{lem:decay-Gronwall} with \(t_0=1\). Denote
\begin{align*}
A_1
:=
\int_1^\infty y(t)\,{\rm d}t,
\qquad
B_1
:=
\int_1^\infty a_*(t)\,{\rm d}t.
\end{align*}
Then, for \(t\geq1\), it holds that
\begin{align*}
y(t)
\leq
\frac{
\bigl(y(1)+1\bigr)\exp(A_1+B_1)-1
}{t}.
\end{align*}
Since \begin{align*}y(1)+A_1+B_1\lesssim\mathcal E_N(0),\end{align*} 
and \(\mathcal{E}_{N}(0)\) is sufficiently small, one has
\begin{align*}
y(t)
\lesssim
t^{-1}\mathcal E_N(0),
\qquad
t\geq1.
\end{align*}
For \(0\leq t\leq1\), the uniform energy estimate \eqref{eq:positive-uniform} gives the same bound
with \(t^{-1}\) replaced by \((1+t)^{-1}\). Hence,
\begin{align}\label{eq:positive-decay}
\mathcal E_N^{(1)}(t)
\leq
C(1+t)^{-1}\mathcal E_N(0),
\qquad
t\geq0.
\end{align}
It follows from \eqref{eq:positive-equivalence} that
\begin{align}\label{eq:positive-L2-decay}
&\sum_{1\leq|\alpha|\leq N-1}
\left\{
\|\partial_x^\alpha f(t)\|_{L_{x,v}^2}
+
\|\partial_x^\alpha u(t)\|_{L^2}
\right\}\leq
C(1+t)^{-\frac{1}{2}}
\left(
\|f_0\|_{L_v^2(H^N)}
+
\|u_0\|_{H^N}
\right).
\end{align}
Thus, the proof of \eqref{TA3a} is completed.

Finally, by applying the first homogeneous Sobolev inequality in Lemma \ref{L2.1}, we can deduce from \eqref{eq:positive-L2-decay} that
\begin{align*}
&\sum_{|\alpha|\leq N-3}
\Big\{
\|\partial_x^\alpha f(t)\|_{L^\infty(L_v^2)}
+
\|\partial_x^\alpha u(t)\|_{L^\infty}
\Big\}\leq
C(1+t)^{-\frac{1}{2}}
\left(
\|f_0\|_{L_v^2(H^N)}
+
\|u_0\|_{H^N}
\right),
\end{align*}
which proves \eqref{TA3} and completes the proof of
Theorem~\ref{Th1}.
\end{proof}

\section{Zero-order \(L^2\) decay of the dissipative variables}

This section is devoted to the proof of Theorem~\ref{Th2}. Since the
common particle-fluid momentum remains undamped at zero spatial
frequency, no uniform algebraic decay rate is available for the complete
zero-order energy. The algebraic convergence established in
\cite{CDM-KRM-2011}, for instance, relies on additional assumptions on
the initial data. Here, without any \(L^1\) or other low-frequency
condition, we show that the directly dissipative variables
$u-J(f)$ and $\{\mathbf I-\mathbf P_0\}f$
decay in the \(L^2\) norm at the rate \((1+t)^{-1/2}\). This result identifies a   zero-order relaxation mechanism that is not visible at the level of the complete energy.

\begin{proof}[Proof of Theorem~\ref{Th2}]
Following \cite[Section 2]{CDM-KRM-2011}, for a velocity function
\(\phi=\phi(x,v)\), we define the moment functional
\(\Gamma_{ij}(\phi)\) by
\begin{align*}
\Gamma_{ij}(\phi)
:=
\big\langle
\phi,(v_iv_j-1)\sqrt M
\big\rangle_v,
\qquad 1\leq i,j\leq3,
\end{align*}
Taking the \(v\sqrt M\)-moment of \eqref{A1}$_3$, we obtain the particle
momentum equation
\begin{align}\label{eq:sec5-J-equation}
\partial_tJ(f)+J(f)+\nabla_xa(f)
+\operatorname{div}_x
\Gamma(\{\mathbf I-\mathbf P_0\}f)
=
u+a(f)u.
\end{align}
Applying the Leray projection \(\mathbb P\) to
\eqref{A1}$_1$ and using \(\mathbb Pu=u\), we obtain
\begin{align}\label{eq:sec5-projected-fluid}
\partial_tu+u
=
\mathbb PJ(f)
-\mathbb P(u\cdot\nabla_xu)
-\mathbb P(a(f)u).
\end{align}
Subtracting \eqref{eq:sec5-J-equation} from
\eqref{eq:sec5-projected-fluid} yields
\begin{align}\label{eq:sec5-relative-momentum}
&\partial_t\{u-J(f)\}
+(\mathbf I+\mathbb P)\{u-J(f)\}\nonumber\\
&\quad=
\nabla_xa(f)
+\operatorname{div}_x
\Gamma\big(\{\mathbf I-\mathbf P_0\}f\big)
-\mathbb P(u\cdot\nabla_xu)
-\mathbb P(a(f)u)-a(f)u.
\end{align}

Since \(\mathbb P\) is an orthogonal projection on \(L^2\), it follows
that
\begin{align*}
&\big(
(\mathbf I+\mathbb P)\{u-J(f)\},
u-J(f)
\big)_x=
\|u-J(f)\|_{L^2}^2
+
\big\|\mathbb P\{u-J(f)\}\big\|_{L^2}^2.
\end{align*}
Moreover, the Gaussian decay of \(\sqrt M\) absorbs the polynomial
velocity weight in the definition of \(\Gamma\), and hence
\begin{align*}
\left\|
\operatorname{div}_x
\Gamma\big(\{\mathbf I-\mathbf P_0\}f\big)
\right\|_{L^2}
\leq
C\left\|
\nabla_x\{\mathbf I-\mathbf P_0\}f
\right\|_{L_{x,v}^2}.
\end{align*}
Taking the \(L^2\) inner product of
\eqref{eq:sec5-relative-momentum} with \(u-J(f)\), and then applying
Young's inequality, we obtain
\begin{align}\label{eq:sec5-relative-energy}
&\frac{{\rm d}}{{\rm d}t}
\|u-J(f)\|_{L^2}^2
+
\|u-J(f)\|_{L^2}^2\nonumber\\
&\quad\leq
C\big\{
\|\nabla_xa(f)\|_{L^2}^2
+
\left\|
\nabla_x\{\mathbf I-\mathbf P_0\}f
\right\|_{L_{x,v}^2}^2
\big\}
+
C\big\{
\|u\cdot\nabla_xu\|_{L^2}^2
+
\|a(f)u\|_{L^2}^2
\big\}.
\end{align}

We next estimate the four-moment remainder. Since
\(\mathbf P_0\) commutes with \(\mathcal L\), applying
\(\{\mathbf I-\mathbf P_0\}\) to \eqref{A1}$_3$ yields
\begin{align}\label{eq:sec5-microscopic-equation}
&\partial_t\{\mathbf I-\mathbf P_0\}f
+
\{\mathbf I-\mathbf P_0\}(v\cdot\nabla_xf)
-
\mathcal L\{\mathbf I-\mathbf P_0\}f
=
\{\mathbf I-\mathbf P_0\}
\big(
-u\cdot\nabla_vf+\frac12(u\cdot v)f
\big).
\end{align}
By utilizing the decomposition \(f = a(f)\sqrt{M}+J(f)\cdot v\sqrt{M}+\{\mathbf{I}-\mathbf{P}_0\}f\) and performing integration by parts with respect to \(x\) and \(v\), we can conclude that, for any sufficiently small \(\epsilon>0\), 
\begin{align*}
 \left|
\big(
\{\mathbf I-\mathbf P_0\}(v\cdot\nabla_xf),
\{\mathbf I-\mathbf P_0\}f
\big)_{x,v}
\right|
 \leq
\epsilon
\big\|\{\mathbf I-\mathbf P_0\}f\big\|_\nu^2
+
C_{\epsilon}\|\nabla_xJ(f)\|_{L ^2}^2,
\end{align*}
and
\begin{align*}
&\big|
\big(
-u\cdot\nabla_vf+\frac12(u\cdot v)f,
\{\mathbf I-\mathbf P_0\}f
\big)_{x,v}
\big|\leq
\epsilon
\big\|\{\mathbf I-\mathbf P_0\}f\big\|_\nu^2
+
C_\epsilon\|u\|_{L ^\infty}^2\|J(f)\|_{L ^2}^2
+
C\|u\|_{L ^\infty}
\big\|\{\mathbf I-\mathbf P_0\}f\big\|_\nu^2.
\end{align*}
The uniform smallness from \eqref{TA2} allows the last term to
be absorbed into the Fokker-Planck dissipation. Taking the
\(L_{x,v}^2\) inner product of
\eqref{eq:sec5-microscopic-equation} with
\(\{\mathbf I-\mathbf P_0\}f\), and using the coercivity of
\(\mathcal L\), we obtain
\begin{align}\label{eq:sec5-microscopic-energy}
\frac{{\rm d}}{{\rm d}t}
\big\|\{\mathbf I-\mathbf P_0\}f\big\|_{L_{x,v}^2}^2
+
\lambda_6
\big\|\{\mathbf I-\mathbf P_0\}f\big\|_\nu^2\leq
C\|\nabla_xJ(f)\|_{L ^2}^2
+
C\|u\|_{L ^\infty}^2\|J(f)\|_{L ^2}^2
\end{align}
for some constant \(\lambda_6>0\).

Combining \eqref{eq:sec5-relative-energy} with
\eqref{eq:sec5-microscopic-energy},  we arrive at
\begin{align*}
&\frac{{\rm d}}{{\rm d}t}
\Big(
\|u-J(f)\|_{L ^2}^2
+
\big\|\{\mathbf I-\mathbf P_0\}f\big\|_{L_{x,v}^2}^2
\Big)
+
\lambda_7
\Big(
\|u-J(f)\|_{L ^2}^2
+
\big\|\{\mathbf I-\mathbf P_0\}f\big\|_{L_{x,v}^2}^2
\Big)
\nonumber\\
&\quad\leq
C\Big(
\|\nabla_xa(f)\|_{L ^2}^2
+
\|\nabla_xJ(f)\|_{L ^2}^2
+
\big\|\nabla_x\{\mathbf I-\mathbf P_0\}f\big\|_{L_{x,v}^2}^2
\Big)
\nonumber\\
&\quad\quad
+
C\|u\|_{L^\infty}^2
\big(
\|\nabla_xu\|_{L ^2}^2
+
\|a(f)\|_{L ^2}^2
+
\|J(f)\|_{L ^2}^2
\big),
\end{align*}
for some constant $\lambda_{7}>0$.

By using the positive-order decay \eqref{TA3a}  and the pointwise
estimate \eqref{TA3}, we have
\begin{align*}
\|\nabla_xa(f)(t)\|_{L^2}^2
+\|\nabla_xJ(f)(t)\|_{L^2}^2
+\big\|\nabla_x\{\mathbf I-\mathbf P_0\}f(t)\big\|_{L_{x,v}^2}^2
+\|\nabla_xu(t)\|_{L^2}^2
+\|u(t)\|_{L^\infty}^2
\leq
C(1+t)^{-1}\mathcal E_N(0).
\end{align*}
Moreover, the uniform estimate \eqref{TA2} implies that
\begin{align*}
\|a(f)(t)\|_{L^2}^2+\|J(f)(t)\|_{L^2}^2
\leq C\mathcal E_N(0),
\end{align*}
which, together with the smallness of
\(\mathcal E_N(0)\), yields
\begin{align}\label{eq:sec5-forcing-decay}
&\frac{{\rm d}}{{\rm d}t}
\Big(
\|u-J(f)\|_{L^2}^2
+
\big\|\{\mathbf I-\mathbf P_0\}f\big\|_{L_{x,v}^2}^2
\Big)
+\lambda_7
\Big(
\|u-J(f)\|_{L^2}^2
+
\big\|\{\mathbf I-\mathbf P_0\}f\big\|_{L_{x,v}^2}^2
\Big)\nonumber\\
&\quad\leq
C(1+t)^{-1}\mathcal E_N(0).
\end{align}
Applying Gr\"{o}nwall's inequality, we obtain
\begin{align}\label{G5.9}
\|u(t)-J(f)(t)\|_{L^2}^2
+
\big\|\{\mathbf I-\mathbf P_0\}f(t)\big\|_{L_{x,v}^2}^2
\leq&\,
C{\rm e}^{-\lambda_7t}\mathcal E_N(0)
+
C\mathcal E_N(0)
\int_0^t
{\rm e}^{-\lambda_7(t-\tau)}
(1+\tau)^{-1}\,{\rm d}\tau\nonumber\\
\leq&\,
C(1+t)^{-1}\mathcal E_N(0).
\end{align}
Taking square roots in \eqref{G5.9} proves
\eqref{TA4} and completes the proof of Theorem~\ref{Th2}.
\end{proof}

\bigskip 
\noindent{\bf Acknowledgements:} 
The author sincerely thanks Professor Renjun Duan for his generous assistance and for many valuable discussions on kinetic theory this year.

\vspace{2mm}

\noindent\textbf{Conflict of interest.} The authors do not have any possible conflicts of interest.

\vspace{2mm}

\noindent\textbf{Data availability statement.}
 Data sharing is not applicable to this article as no data sets were generated or analyzed during the current study.

\bibliographystyle{plain}

\end{document}